   \newtheorem{definition}{Definition}[section]
   \newtheorem{lemma}{Lemma}[section]
   \newtheorem{theorem}{Theorem}[section]
   \newtheorem{proposition}{Proposition}[section]
   \newtheorem{corollary}{Corollary}[section]
   \newtheorem{remark}{Remark}[section]
   \newcommand{\be}{\begin{equation}}
   \newcommand{\ee}{\end{equation}}
\begin{document}
    \title{Spectral Distribution in the Eigenvalues Sequence of Products of g-Toeplitz Structures}
   \author{Eric Ngondiep \thanks{Department of Mathematics and Statistics, College of Science, Al-Imam Muhammad Ibn Saud
   Islamic University (IMIU), 90950 Riyadh,Kingdom of Saudi Arabia.
       Email: ericngondiep@gmail.com, engondiep@imamu.edu.sa}}
    \date{\text{\,\,}}
   \maketitle

    \textbf{Abstract.}
    Starting from the definition of an $n\times n$ $g$-Toeplitz matrix, $T_{n,g}(u)=\left[\widehat{u}_{r-gs}\right]_{r,s=0}^{n-1},$
    where $g$ is a given nonnegative parameter, $\{\widehat{u}_{k}\}$ is the sequence of Fourier coefficients of the Lebesgue
   integrable function $u$ defined over the domain $\mathbb{T}=(-\pi,\pi]$, we consider the product of $g$-Toeplitz
   sequences of matrices, $\{T_{n,g}(f_{1})T_{n,g}(f_{2})\},$ which extends the product of Toeplitz structures,
    $\{T_{n}(f_{1})T_{n}(f_{2})\},$ in the case where the symbols $f_{1},f_{2}\in L^{\infty}(\mathbb{T}).$ Under suitable assumptions,
    the spectral distribution in the eigenvalues sequence is completely characterized for the products of $g$-Toeplitz structures.
   Specifically, for $g\geq2$ our result shows that the sequences $\{T_{n,g}(f_{1})T_{n,g}(f_{2})\}$ are clustered to zero. This extends
   the well-known result, which concerns the classical case (that is, $g=1$) of products of Toeplitz matrices. Finally, a large
   set of numerical examples confirming the theoretic analysis is presented and discussed.\\

   \ \noindent {\bf Keywords:} Matrix sequences, $g$-Toeplitz, spectral distribution, eigenvalues, products of g-Toeplitz,
   clustering.\\
   \\
   {\bf AMS SC: 65F10, 15A18, 47B36, 47B65}.

   \section{Introduction}\label{I}
   Let $f$ be a Lebesgue function defined on the interval $(-\pi,\pi]$. We recall that for a given nonnegative
   integer $g,$ an $n\times n$ matrix $A_{n,g},$ is called $g$-Toeplitz if $A_{n,g}=\left[\hat{f}_{r-gs}\right]_{r,s=0}^{n-1}$.
   In this case, a $g$-Toeplitz matrix is denoted by $T_{n,g}(f)$ and the sequence $\{\hat{f}_{k}\}_{k}$ of entries
   of $T_{n,g}(f),$ can be interpreted as the sequence of Fourier coefficients of an integrable function $f$ defined on
   $\mathbb{T}.$ In this work we are motivated by the variety of fields where such matrices can be encountered such as,
   e.g., multigrid methods \cite{13nss}, wavelet analysis together with the subdivision algorithms, or equivalently, in the
   associated refinement equations, see \cite{5nss, 7nss} and the references therein. Furthermore, interesting connections
   between dilation equations in the wavelets context and multigrid algorithms \cite{13nss, 27nss} were proven by Gilbert Strang
   \cite{23nss} when establishing the restriction/prolongation operators \cite{10nss,1nss} with boundary conditions.
   The use of different boundary conditions is quite natural when treating with signal/image restoration problems or
   differential equations, see \cite{19nss,17nss}.\\

   We denote the usual Hilbert space of square-integrable functions over the circle $G=\{z\in\mathbb{C},\text{\,}|z|=1\},$ by
   $L^{2}(G),$ and let $\mathcal{H}^{2}$ be the Hardy space of functions belonging to $L^{2}(G),$ and whose the negative
   Fourier coefficients are equal to zero. Obviously, the subset $G$ is isomorphic to the set $\mathbb{T},$ and the notation
   $G\cong \mathbb{T}$ means that both domains $G$ and $\mathbb{T}$ are isomorph. In the rest of this paper we sometimes use the
    domain $G$ or $\mathbb{T},$ depending on the context. Let us define the $g$-Toeplitz operator, $T_{f,g},$ with generating function
    $f,$ as the operator
   \begin{eqnarray*}
     T_{f,g}: \mathcal{H}^{2}&\rightarrow& \mathcal{H}^{2} \\
     u &\mapsto& P_{g}(fu),
   \end{eqnarray*}
   where $P_{g}$ is the mapping from $L^{2}(G)$ onto $\mathcal{H}^{2},$ defined as
   \begin{equation}\label{01}
    P_{g}(fu):=P^{\perp}(fu_{g}),
   \end{equation}
   where $u_{g}\in\mathcal{H}^{2},$ completely depends on the parameter $g$ and the function $u.$ For example, if $u$
   is defined on $\mathbb{T}$ by $u(t)=\exp(it),$ then the function $u_{g}$ is given by
   \begin{equation}\label{02}
    u_{g}(t)=\exp(igt),\text{\,\,\,}\forall t\in\widehat{\mathbb{T}}=\left(-\frac{\pi}{g},\frac{\pi}{g}\right].
   \end{equation}
   More specifically, $u_{g}=u\circ h_{g},$ where $h_{g}$ is the map from $\widehat{\mathbb{T}}$ onto $\mathbb{T}$ defined as
    $h_{g}(t)=gt.$ Furthermore, $P^{\perp}$ is the orthogonal
   projection from $L^{2}(G)$ onto $\mathcal{H}^{2}.$ It is worth noticing that such an operator, $T_{f,g},$ is bounded if and only
   if the symbol $f$ is in the space of (essentially) bounded functions on the circle, and its infinite matrix, $T_{g}(f),$ in the
   canonical orthonormal basis $\mathcal{B}=\{1,z,z^{2},\ldots\},$ is not (in general) constant along the diagonals, whenever $g>1.$ More
   specifically, the entries of $T_{g}(f),$ obey the rule $T_{g}(f)=\left[\hat{f}_{r-gs}\right]_{r,s=1}^{\infty},$ where the
   entries $\hat{f}_{k}$ can be interpreted as the Fourier coefficients of the symbol $f$ defined by
   \begin{equation}\label{1}
    \hat{f}_{k}=\frac{1}{2\pi}\int_{-\pi}^{\pi}f(e^{it})\exp(-ikt)dt.
   \end{equation}

   Now, let $u\in L^{1}(\mathbb{T})$ and let $n$ be a non-negative integer. By $T_{n,g}(u)$ we denote the $n\times n$ matrix
   $\left[\hat{u}_{r-gs}\right]_{r,s=1}^{n}.$ It is not hard to prove that the sequence of operators on
   $\mathcal{H}^{2},$ associated with the sequences $\{T_{n,g}(u)\}_{n=1}^{\infty},$ is an approximating sequence for the
   $g$-Toeplitz operator $T_{u,g},$ when $u\in L^{\infty}(G)$ (the space of (essentially) bounded functions on the circle), hence
    $\{T_{n,g}(u)\}_{n=1}^{\infty},$ is called a $g$-Toeplitz
    sequence. It is interesting to ask how the spectrum $\Lambda_{n,g}=\{\lambda_{1},\lambda_{2},...,\lambda_{n}\},$ of $T_{n,g}(u)$
    is associated with the set of the eigenvalues of $T_{g}(u)$ if $u\in L^{\infty}(G),$ or even if $u\in L^{1}(G),$ to analyze the
    "spectral behavior" of the sequence of sets $\{\Lambda_{n,g}\}_{n=1}^{\infty}$ (or that of the sequence
    $\{\Gamma_{n,g}\}_{n=1}^{\infty},$ where $\Gamma_{n,g}$ represents the set of singular values of $T_{n,g}(u)$). When $g=1,$
    $T_{n,1}(u)$ is nothing but the classical Toeplitz matrix $T_{n}(u),$ so an important result concerning the sequences of
    spectra is the famous Szeg\"{o} theorem which says that, if $u\in L^{\infty}(G),$ is real-valued, then
   \begin{equation}\label{2}
    \underset{n\rightarrow\infty}{\lim}\frac{1}{n}\underset{\lambda\in\Lambda_{n,1}}{\Sigma}F(\lambda)=\frac{1}{mes(G)}
    \int_{G}F(u(x))dx,
   \end{equation}
   for every continuous function $F$ with compact support (see, for example, \cite{14nss}). Furthermore, Tilli and
   Tyrtyshnikov/Zamarashkin, independently, showed that relation $(\ref{2})$ holds for any integrable function $u$
   which is just real-valued, see \cite{35sss,38sss}. Parter is the first researcher who has obtained the corresponding
   result for a complex-valued function $u$ and the sequence of sets of its singular values when replacing $u$ by
   $|u|$ under the hypothesis of continuous times uni-modular symbols, see \cite{17en}, Avram (essentially bounded symbols
   \cite{1sss}), and Tyrtyshnikov/Zamarashkin \cite{35sss,38sss}, independently, when the symbol $f$ is just integrable. The
   book \cite{8sss} gives a synopsis of all these results in chap. $5$ and $6$ and other interesting facts in chap. $3$ concerning
   the relation between the pseudospectrum of $\{T_{n}(u)\}_{n=1}^{\infty},$ and that of $T(u).$ Relation $(\ref{2})$ was
   established for a more general class of test functions $F$ in \cite{38sss,5nss} and the case of several variables
   (multilevel case) and matrix-valued functions was studied in \cite{35sss,18en} in the context
   of preconditioning (other related results were established by Linnik, Widom, Doktorski, see Section $6.9$ in \cite{8sss}).\\

   However, an obvious example where the eigenvalue result does not hold is given by the g-Toeplitz sequence related to the
   function $u(t)=\exp(-it),$ where $i^{2}=-1,$ which has only zero eigenvalues so that the requirement $(\ref{2})$
   means that $F(0)=\frac{1}{2\pi}\int_{-\pi}^{\pi}F(\exp(it))dt,$ which is far from being satisfied for all continuous
   functions with compact support, even though condition $(\ref{2})$ holds for harmonic functions (in cases like this one it
   is better to consider the pseudospectrum, see \cite{8sss}). Indeed, some authors like Tilli were able to show that, if $u$ is
   any complex-valued integrable function, then the restriction $(\ref{2})$ holds for all harmonic test functions $F$ \cite{33sss}
   and that it is even satisfied by all continuous functions with compact support as long as the generating function $u$ satisfies
   a certain geometric limitation. Moreover, the symbol $u$ must be essentially bounded and such that its (essential) range does
   not disconnect the complex plane and has empty interior, see \cite{36sss}. This set of functions is called the Tilli class.
   In other contexts, such a property is informally called "thin spectrum". We recall that the space of all essentially bounded and
   real-valued functions is obviously a subset of the Tilli class.\\

   In some recent works \cite{en,en1,enss,n1ss} we addressed the problem of asymptotic distribution of eigenvalues of $g$-Toeplitz sequences
   together with the spectral analysis of $g$-circulant matrices, in the case where the entries, $\hat{u}_{k},$ are the Fourier coefficients
   of a real-valued function $u\in L^{\infty}(\mathbb{T})$. The generalization of this analysis to the block, multilevel
   case, amounting to choose the symbol, $u,$ multivariate, i.e., defined on the set $\mathbb{T}^{d},$ and matrix-valued, i.e.,
   such that, $u(x)$ is a matrix of given dimension $p\times q$ was considered. The aim of this work is the study of the numerical
   solution of linear systems with associated $g$-Toeplitz matrices. Specifically, we focus on the problem of asymptotic analysis
   of the distribution results in the eigenvalues sequence of products of $g$-Toeplitz structures. This can be viewed as a
   preconditioning problem with $g$-Toeplitz preconditioners. When $g\geq2,$ the interesting result is that the matrix sequence
   $\{T_{n,g}(f_{1})T_{n,g}(f_{2})\}_{n}$ is clustered at zero so that the case of $g=1,$ widely studied in the literature is
   exceptional \cite{7dgmss}. However, it is worth noting to recall that the product of $g$-Toeplitz
   operators is not necessary a
   $g$-Toeplitz operator. In fact, if $g=1,$ the authors \cite{9sss,19sss} proved that the product of Toeplitz operators
   is rarely equal to a Toeplitz operator, but it turns out that the sequence of eigenvalues (singular values) of the product
   of two Toeplitz sequences is often related to the product of the two symbols in a Szeg\"{o}-type way. For the singular values
   the result is known as long as all the involved symbols are essentially bounded and, in fact, for any linear combination of
   products of Toeplitz operators, the distribution function is exactly the linear combination of the products of the generating
   functions of the sequences: the latter goes back to the work of Roch and Silbermann (see Sections $4.6$ and $5.7$ in \cite{8sss}).
   In \cite{27sss,28sss} the authors have extended the previous results by considering integrable symbols, not necessarily
   bounded, and "pseudo" inversion and the related algebra of sequences. Of course, for the eigenvalues much less is known, and one
   simple reason is that much less is true, as another basic example discussed at the beginning of Section $2$ in \cite{29sss} shows.
   The authors proved in \cite{16sss,28sss} that the eigenvalues of a non-Hermitian complex perturbation of a Jacobi matrix
   sequence, which are not necessarily real, are still distributed as the real-valued function $2\cos(t)$ over $(0,\pi),$ which
   characterizes the non-perturbed case where the Jacobi sequence is of course real and symmetric. The authors used these results
   to analyze the eigenvalue distribution of products of Toeplitz sequences, discussed, applied and extended more general tools
   introduced by Tilli \cite{36sss} and based on the Mergelyan theorem \cite{22sss}, while in
   \cite{33dgmss,20dgmss,7dgmss,13dgmss,16dgmss} the authors consider the
   spectral behavior of preconditioned non-Hermitian unilevel/multilevel block Toeplitz structures, with in certain cases, the
   symbol of the preconditioner chosen in a trigonometric polynomial so that this preconditioner is bounded and the related linear
   systems are easily solvable. In this note, the attention is focused on the product of non-Hermitian $g$-Toeplitz matrices with
   (essentially) bounded symbols whose product is real-valued. Specifically, we are interested in the following items:

   \begin{description}
     \item[(1)] Localization results for all the eigenvalues of $T_{n,g}(f_{1})T_{n,g}(f_{2}).$ When $g=1,$ the results is
     given in \cite{sss}, case where $f_{1},f_{2}\in L^{\infty}(\mathbb{T}^{d}),$ with $f_{1}f_{2}$ to be real-valued;
     \item[(2)] Spectral distribution in the eigenvalues sequence of products of $g$-Toeplitz structures
     $\{T_{n,g}(f_{1})T_{n,g}(f_{2})\}_{n\in\mathbb{N}},$ where $f_{1},f_{2}\in L^{\infty}(G):$ this is our original contribution
     and it represents an extension of the work presented in \cite{sss}, Section $3$, Theorems $3.1$-$3.6$ (see \cite{ss} for
     previous results);
     \item[(3)] A wide set of numerical examples concerning the eigenvalue distributions and the clustering properties of the
     sequence of matrices $\{T_{n,g}(f_{1})T_{n,g}(f_{2})\},$ are considered and discussed.
   \end{description}

    The paper is organized as follows. Section $\ref{II}$ is reserved for definitions and main tools. In section $\ref{III}$ we analyze
    the problem of the spectral distribution in the eigenvalues sequence of products of $g$-Toeplitz structures. A generalization
    of the distribution results to the case of blocks and multilevel setting ($e$ being the vector of all ones \cite{ss}) amounting
    to choose the multi-variate symbols is presented in section $\ref{IV}$. Section  $\ref{V}$ deals with some numerical experiments,
    while we draw in section $\ref{VI}$ the general conclusion and present the future direction of work.

   \section{Definitions and main tools}\label{II}

      We begin with some basic notations and formal definitions. For any $n\times n$ matrix, $X_{n},$ with eigenvalues
   $\lambda_{j}(X_{n})$ (respectively, singular values, $\sigma_{j}(X_{n})$), $j=1,2,...,n,$ and $p\in[1,\infty],$ we define,
   $\|X_{n}\|_{p},$ the Schatten $p$-norm of $X_{n}$ to be the $l^{p}$-norm of the vector of singular values, that is,
   \begin{equation*}
    \|X_{n}\|_{p}=\left[\overset{n}{\underset{j=1}\sum}(\sigma_{j}(X_{n}))^{p}\right]^{\frac{1}{p}}.
   \end{equation*}

   In this report, we consider the trace norm $\|\cdot\|_{1}$ together with the norm $\|\cdot\|_{\infty}$ which is known as the
   spectral norm $\|\cdot\|.$ More specifically, $\|\cdot\|$ is defined as
   \begin{equation*}
    \|X_{n}\|=\underset{x\in\mathbb{C}^{n},\text{\,}\|x\|_{2}=1}{\sup}\|X_{n}x\|.
   \end{equation*}
    We put $\Lambda_{n}=\{\lambda_{j}(X_{n}),\text{\,\,} j=1,...,n\},$  the spectrum of the matrix $X_{n}.$
   So, for any function $F$ defined on $\mathbb{C}$, the symbol $\Sigma_{\lambda}(F,X_{n}),$ stands for the mean
   \begin{equation}\label{3}
    \Sigma_{\lambda}(F,X_{n}):=\frac{1}{n}\overset{n}{\underset{j=1}\sum}F\left(\lambda_{j}(X_{n})\right)=\frac{1}{n}
    \underset{\lambda\in\Lambda_{n}}{\sum}F(\lambda),
   \end{equation}
   and the symbol $\Sigma_{\sigma}(F,X_{n}),$ denotes the corresponding expression with the singular values obtained by replacing
   the eigenvalues. Our analysis consists in explicit formulae of the distribution results of products of
   $g$-Toeplitz sequences. Following what is known in the classical case of $g=1$ (or $g=e,$ in the multilevel setting), we
   need to link the coefficients of the product of $g$-Toeplitz sequence to an (essentially) bounded function defined over
   the domain $G.$ For the general definition of Toeplitz or $g$-Toeplitz sequences (where $g$ is a $d$-dimensional
   vector of nonnegative integers), we refer the readers to \cite{nss,ns}.\\

   Now, let us introduce some important localization results taken from \cite{32sss,16dgmss}. First we recall the notion of
   essential range, $\mathcal{ER}(f),$ of a (matrix-valued) function $f.$ In the following, for every $X\subset\mathbb{C},$ $d(X,z)$
   is the (Euclidean) distance of $X,$ from the point $z\in\mathbb{C},$ and $\|\cdot\|$ denotes the spectral (Euclidean) norm
   of both vectors and matrices. For $z\in\mathbb{C},$ and $\epsilon>0,$ the disc in the complex field centered in $z,$ with
   radius $\epsilon,$ is denoted by $D(z,\epsilon).$

      \begin{definition}\label{4d}
   Given a measurable complex-valued function $h:G\rightarrow\mathbb{C},$ defined on some Lebesgue measurable set
   $G\subset\mathbb{R}^{k},$ the essential range of $h,$ denoted $\mathcal{ER}(h),$ is defined by,
   \begin{equation*}
    \mathcal{ER}(h)=\{z\in\mathbb{C}:\forall\epsilon>0,\text{\,}mes\{t\in G:h(t)\in D(z,\epsilon)\}>0\},
   \end{equation*}
   where, $mes(\cdot),$ is the Lebesgue measure in $\mathbb{R}^{k}.$
   \end{definition}

   \begin{corollary}\label{c0}
    It is easy to see that $\mathcal{ER}(h)$ is always closed (indeed: its complement is open). The function $h$ is essentially bounded
    if its essential range is bounded.
   Furthermore, if $h$ is real-valued, then the essential supremum (infimum) is defined as the supremum (infimum) of its essential
   range. In addition, if the function $h$ is an $N\times N$ matrix-valued and measurable, then the essential range of $h$ is the
   union of the essential ranges of the complex-valued eigenvalues $\lambda_{j(h)}$, $j = 1,...,N.$ Finally, it can be proven
   that $h(t)\in\mathcal{ER}(h),$ for almost every $t\in G.$
   \end{corollary}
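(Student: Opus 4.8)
\noindent\emph{Proof plan.} The five claims are elementary measure--theoretic facts about the set in Definition~\ref{4d}, and I would dispatch them in turn, doing the almost--everywhere membership statement early because the characterisations of essential boundedness and of the essential extrema follow from it at once. For the closedness I would show directly that $\mathbb{C}\setminus\mathcal{ER}(h)$ is open: if $z_{0}\notin\mathcal{ER}(h)$, fix $\epsilon_{0}>0$ with $mes\{t\in G:h(t)\in D(z_{0},\epsilon_{0})\}=0$; then every $z\in D(z_{0},\epsilon_{0}/2)$ satisfies $D(z,\epsilon_{0}/2)\subset D(z_{0},\epsilon_{0})$, hence $mes\{t:h(t)\in D(z,\epsilon_{0}/2)\}=0$ and $z\notin\mathcal{ER}(h)$.

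The membership almost everywhere is the conceptual heart and I would get it from second countability of $\mathbb{C}$ (a Lindel\"{o}f argument). Put $U=\mathbb{C}\setminus\mathcal{ER}(h)$, open by the previous paragraph; by definition each $z\in U$ sits in a disc $D(z,\epsilon_{z})$ whose $h$--preimage is null, and since $\mathbb{C}$ has a countable base, $U$ is already covered by countably many such discs $D(z_{k},\epsilon_{k})$. Then $\{t\in G:h(t)\notin\mathcal{ER}(h)\}=\bigcup_{k}\{t:h(t)\in D(z_{k},\epsilon_{k})\}$ is a countable union of null sets, so $h(t)\in\mathcal{ER}(h)$ for almost every $t$.

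For essential boundedness: if $\mathcal{ER}(h)$ lies in the disc of radius $M$, the previous step gives $|h(t)|\le M$ a.e., so $h$ is essentially bounded; conversely, if $|h(t)|\le M$ a.e. and $|z|>M$, then with $\epsilon=|z|-M$ the set $\{t:h(t)\in D(z,\epsilon)\}$ is contained in a null set, so $z\notin\mathcal{ER}(h)$ and $\mathcal{ER}(h)$ is bounded. For real--valued $h$ I would prove $\esssup h=\sup\mathcal{ER}(h)$ by two inequalities: first $\mathcal{ER}(h)\subset\mathbb{R}$, since $h(t)\in\mathbb{R}$ a.e.\ and any non-real point admits a disc with null $h$--preimage; writing $M=\esssup h$, the relation $\mathcal{ER}(h)\cap(M,\infty)=\emptyset$ follows exactly as in the boundedness argument (take $\epsilon=z-M$ for real $z>M$, using that $\{h>M\}$ is a countable union of null sets), while for every $\epsilon>0$ the set $\{t:M-\epsilon<h(t)\le M\}=\{h>M-\epsilon\}\setminus\{h>M\}$ has positive measure (the minuend by definition of $\esssup$, the subtrahend null), so $M\in\mathcal{ER}(h)$; the identity $\essinf h=\inf\mathcal{ER}(h)$ is the mirror image.

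The matrix--valued case is where I expect the real work, and I flag it as the main obstacle. For a measurable $N\times N$ matrix--valued $h$ one first needs measurable scalar functions $\lambda_{1}(h(\cdot)),\dots,\lambda_{N}(h(\cdot))$ enumerating the eigenvalues, so that the sets $\mathcal{ER}(\lambda_{j})$ are even meaningful: in the Hermitian case this is elementary, the ordered eigenvalues being continuous functions of the matrix entries, whereas in general one combines measurability in $t$ of the coefficients of $\det(\zeta I-h(t))$ with a measurable selection of its roots. Granting such a labelling, and reading the essential range of $h$ through the defining condition that for all $\epsilon>0$ the set $\bigcup_{j}B_{j}^{\epsilon}$ has positive measure, where $B_{j}^{\epsilon}=\{t:|\lambda_{j}(h(t))-z|<\epsilon\}$, the inclusion $\bigcup_{j}\mathcal{ER}(\lambda_{j})\subset\mathcal{ER}(h)$ is immediate from $B_{j}^{\epsilon}\subset\bigcup_{i}B_{i}^{\epsilon}$; for the reverse, if $z\notin\mathcal{ER}(\lambda_{j})$ for every $j$ one picks $\epsilon_{j}>0$ with $mes(B_{j}^{\epsilon_{j}})=0$ and takes $\epsilon=\min_{j}\epsilon_{j}>0$ (this finite minimum is the only place where $N<\infty$ is used), so that $\bigcup_{j}B_{j}^{\epsilon}$ is null and $z\notin\mathcal{ER}(h)$. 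Thus the sole genuinely delicate point in the corollary is the measurable choice of eigenvalue functions in the non-Hermitian setting; the rest is bookkeeping.
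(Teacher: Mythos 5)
Your argument is correct on all five points; note that the paper does not supply a proof for this corollary at all (the facts are flagged as \emph{it is easy to see} and \emph{it can be proven}), so there is no paper proof to compare against --- your write-up simply fills in what the author left implicit. The closedness argument via openness of the complement, the Lindel\"of argument for the almost-everywhere membership $h(t)\in\mathcal{ER}(h)$, and the resulting characterisations of essential boundedness and of $\esssup/\essinf$ are exactly the standard measure-theoretic route, and you correctly organise the logic by proving the a.e.\ membership early and deriving the boundedness and extremum statements from it. One cosmetic remark: after the Lindel\"of step the set $\{t:h(t)\notin\mathcal{ER}(h)\}$ is \emph{contained in} the countable union $\bigcup_k\{t:h(t)\in D(z_k,\epsilon_k)\}$ rather than equal to it, but this is immaterial for the null-set conclusion.

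For the matrix-valued assertion you are right to isolate the measurable labelling of the eigenvalues as the only genuinely delicate point. The paper treats the identity $\mathcal{ER}(h)=\bigcup_j\mathcal{ER}(\lambda_j(h))$ essentially as the working definition of essential range for matrix-valued symbols, so nothing further is demanded of the reader there; your reformulation --- taking the natural definition in terms of the distance from $z$ to the spectrum $\sigma(h(t))$ and then proving the two-sided inclusion, with the finite minimum $\epsilon=\min_j\epsilon_j>0$ invoking $N<\infty$ --- is a clean way to justify that this usage is consistent, and your observation about needing a measurable enumeration of the roots of $\det(\zeta I-h(t))$ in the non-Hermitian case is the correct caveat. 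The only small gap is that the $\esssup$ discussion implicitly assumes $h$ is already known to be essentially bounded so that $M$ is finite; this is harmless in context but worth a word.
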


   Now, we turn to the definition of spectral distribution and clustering, in the sense of eigenvalues and singular values,
   of a sequence of matrices (matrix-sequence), and we define the area of $K,$ in the case where, $K,$ is a compact subset
   of $\mathbb{C}.$ This definition is motivated by the Szeg$\ddot{o}$ and Tilli theorems characterizing the spectral approximation
   of a Toeplitz operator (in certain cases) by the spectra of the elements of the natural approximating matrix sequences
   $\{A_{n}\}$, where $A_{n}$ is formed by the first $n$ rows and columns of the matrix representation of the operator.
   \begin{definition}\label{5d}
   Let $\mathcal{C}_{0}(\mathbb{C})$ be the set of continuous function with bounded support defined over the complex field,
   $d$ a positive integer and $\theta$ a complex-valued measurable function defined on a set $G^{d}\subset\mathbb{C}^{d},$ of
   finite and positive Lebesgue measure $mes(G^{d})$. Here $G$ will be assumed equal to $\mathbb{T}$ (in fact, $G\cong\mathbb{T}$).
   A matrix sequence $\{A_{n}\}$ is said to be distributed (in the sense of eigenvalues) as the pair $(\theta,G^{d})$, or to have
   the distribution function $\theta,$ if for all $F\in\mathcal{C}_{0}(\mathbb{C})$, the following limit relation holds
   \begin{equation}\label{5}
    \underset{n\rightarrow\infty}{\lim}{\sum}_{\lambda}(F,A_{n})=\frac{1}{mes(G^{d})}\int_{G^{d}}F\left(\theta(t)\right)dt,
   \end{equation}
   where $\sum_{\lambda}(F,A_{n}),$ is given by relation $(\ref{3}).$ Whenever $(\ref{5})$ holds,
   $\forall F\in\mathcal{C}_{0}(\mathbb{C}),$ we write $\{A_{n}\}\sim_{\lambda}(\theta,G^{d})$.\\

   If equality $(\ref{5})$ holds for every $F\in\mathcal{C}_{0}(\mathbb{R}^{+}_{0}),$ in place of
   $F\in\mathcal{C}_{0}(\mathbb{C})$, with the singular values $\sigma_{j}(A_{n})$, $j=1,...,n,$ in place of the eigenvalues,
   and with $|\theta(t)|$ in place of $\theta(t)$, we say that the matrix sequence $\{A_{n}\}$ is distributed (in the sense
   of singular values) as the pair $(\theta,G^{d}),$ and we denote, $\{A_{n}\}\sim_{\sigma}(\theta,G^{d}),$ more specifically,
  for every $F\in\mathcal{C}_{0}(\mathbb{R}^{+}_{0}),$ we have
   \begin{equation}\label{6}
    \underset{n\rightarrow\infty}{\lim}{\sum}_{\sigma}(F,A_{n})=\frac{1}{m(G^{d})}\int_{G^{d}}F\left(|\theta(t)|\right)dt,
   \end{equation}
    where ${\sum}_{\sigma}(F,A_{n}),$ designates the corresponding expression with the singular values replacing the
    eigenvalues in $(\ref{3}).$ Furthermore, in order to treat block Toeplitz matrices, we consider measurable functions
   $\theta:\text{\,}G^{d}\rightarrow\mathcal{M}_{N}\equiv\mathcal{M}_{NN}$, where $\mathcal{M}_{MN},$ is the space of $M\times N$
   matrices with complex entries and a function is considered to be measurable if and only if the component functions are.
   In that case, $\{A_{n}\}\sim_{\lambda}(\theta,G^{d}),$ means that $M=N,$ and
   \begin{equation}\label{7}
   \underset{n\rightarrow\infty}{\lim}{\sum}_{\lambda}(F,A_{n})=\frac{1}{m(G^{d})}\int_{G^{d}}\frac{\sum_{j=1}^{N}
   F(\lambda_{j}(\theta(t)))}{N}dt,
   \end{equation}
   $\forall F\in\mathcal{C}_{0}(\mathbb{C})$, where $\lambda_{j}(\theta(t))$ in relation $(\ref{7})$ are the eigenvalues
   of the matrix $\theta(t)$.\\

   When considering $\theta$ taking values in $\mathcal{M}_{NM},$ we say that, $\{A_{n}\}\sim_{\sigma}(\theta,G^{d}),$
   when for every $F\in\mathcal{C}_{0}(\mathbb{R}^{+}_{0}),$ we have
   \begin{equation}\label{8}
    \underset{n\rightarrow\infty}{\lim}{\sum}_{\sigma}(F,A_{n})=\frac{1}{m(G^{d})}\int_{G^{d}}\frac{\sum_{j=1}^{\min\{N,M\}}
   F(\lambda_{j}(\sqrt{\theta^{*}(t)\theta(t)}))}{\min\{N,M\}}dt.
   \end{equation}

   Finally, we say that two matrix sequences $\{X_{n}\}$ and $\{Y_{n}\},$ are equally distributed in the sense of eigenvalues
   $\lambda$ (or singular values $\sigma$) if $\forall F\in\mathcal{C}_{0}(\mathbb{C}),$ we have
   \begin{equation}\label{9}
    \underset{n\rightarrow\infty}{\lim}\left[{\sum}_{\nu}(F,X_{n})-{\sum}_{\nu}(F,Y_{n})\right]=0
   \end{equation}
   with $\nu=\lambda$ ($\nu=\sigma$), respectively.
   \end{definition}

   Noting that two matrix sequences having the same distribution function are equally distributed. On the other hand,
   two equally distributed matrix sequences may be not associated with a distribution function at all. To describe
   what the distribution result (in the sense of eigenvalues) really means about the asymptotic qualities of the spectrum,
   we will introduce more concrete characterizations of sequences, $\{\Lambda_{n}\},$ such as "clustering", where as above,
   $\Lambda_{n},$ is the set of eigenvalues of $A_{n}$.
   \begin{definition}\label{6d}
   Let $\{A_{n}\}$ be the sequence of matrices with $A_{n}$ of order $n$ and let $S\subset\mathbb{C}$ be a closed subset
   of $\mathbb{C}.$ We say that $\{A_{n}\}$ is weakly clustered at $S$ in the sense of eigenvalues if, for
   every $\epsilon>0,$ the number of the eigenvalues of $A_{n}$ outside the disc $D(S,\epsilon)$ is bounded by a constant
   $q_{\epsilon},$ possibly depending of $\epsilon$, but independent of $n$. In order words,
   \begin{equation}\label{10n}
    q_{\epsilon}(n,S):=\#\{j:\lambda_{j}(A_{n})\notin D(S,\epsilon)\}=o(n),\text{\,\,\,as\,\,\,}n\rightarrow\infty.
   \end{equation}
   \end{definition}

   If $\{A_{n}\},$ is weakly clustered as $S,$ and $S$ is not connected then its disjoint parts are called to be
   sub-clustered. Finally, if we replace eigenvalues with singular values in relations $(\ref{10n}),$ we
   obtain the definitions of a matrix-sequence weakly clustered at a closed subset of $\mathbb{C},$ in the sense of
   singular values.

   \begin{remark}\label{7r}
   Let $\{A_{n}\},$ be a sequence of matrices $\{A_{n}\}$ of order $n.$ If $\{A_{n}\}\sim_{\lambda}(\theta,G^{d}),$ with $\{A_{n}\},$
   $\theta,$ and $G,$ as in Definition $\ref{5d},$ then $\{A_{n}\},$ is weakly clustered at $\mathcal{ER}(\theta),$ in the
   sense of the eigenvalues. Furthermore, it is clear that
    $\{A_{n}\}\sim_{\lambda}(\theta,G^{d}),$ with $\theta=r,$ equal to a constant function, is equivalent to saying that
   $\{A_{n}\},$ is weakly clustered at $r\in\mathbb{C},$ in the sense of the eigenvalues. For more results and relations
   between the notions of equal distribution, equal localization, spectral distribution, spectral clustering etc...,
   (see \cite{25sss}, Section $4$).
   \end{remark}

    Using Theorem $\ref{19t},$ in \cite{en} the author proved the following result.
   \begin{theorem}\label{21t}\cite{en}
   Let $f\in L^{\infty}(\mathbb{T})$ and let $g$ be a non-negative integer. If $f$ is real-valued, then
    $\{T_{n,g}(f)\}\sim_{\lambda}(\theta_{g}f,\mathbb{T})$ where
    \begin{equation}\label{22}
    \theta_{g}=\left\{
                  \begin{array}{ll}
                   1, & \hbox{if $g=1$;} \\
                   0, & \hbox{otherwise.}
                     \end{array}
                  \right.
    \end{equation}
   $\{T_{n,g}(f)\}$ is weakly clustered at $\mathcal{ER}(\theta_{g}f),$ and $\mathcal{ER}(\theta_{g}f),$ strongly attracts the
   spectra of $\{T_{n,g}(f)\}$ with infinite order of attraction for any of its points.
   \end{theorem}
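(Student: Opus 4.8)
The plan is to separate the regimes $g=1$ and $g\neq 1$, since for $g=1$ the assertion is the classical Szeg\H{o}-type theorem, whereas for $g\neq 1$ one has $\theta_g f\equiv 0$ and the whole statement reduces to showing that $\{T_{n,g}(f)\}$ is distributed as, weakly clustered at, and strongly attracted (with infinite order) by the single point $0\in\mathbb{C}$. For $g=1$ we have $T_{n,1}(f)=T_n(f)$, Hermitian because $f$ is real, so $\{T_n(f)\}\sim_{\lambda}(f,\mathbb{T})$ is the Szeg\H{o}--Tyrtyshnikov--Zamarashkin theorem \cite{35sss,38sss,14nss}; the weak clustering at $\mathcal{ER}(f)$ follows from Remark \ref{7r}, and the infinite order of attraction at every point of $\mathcal{ER}(f)$ is part of the classical theory of Hermitian Toeplitz sequences (min--max principle together with the structure of the level sets of $f$; see \cite{25sss,8sss}). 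The case $g=0$ is immediate: all columns of $T_{n,0}(f)=[\hat f_r]_{r,s}$ coincide, hence $\operatorname{rank}T_{n,0}(f)\le 1$, at least $n-1$ eigenvalues vanish, and the three claims hold with a single possible outlier.

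So let $g\geq 2$, and first take a trigonometric polynomial symbol $p(t)=\sum_{|k|\le N}\hat p_ke^{ikt}$. The entry $\hat p_{r-gs}$ of $T_{n,g}(p)$ can be nonzero only if $|r-gs|\le N$, hence only if $r\ge gs-N$. Consider the directed graph on $\{0,\dots,n-1\}$ with an arc $s\to r$ precisely when $\hat p_{r-gs}\neq 0$. Along every arc $r\ge gs-N$, so for each $s>\mu:=\left\lfloor N/(g-1)\right\rfloor$ one has $gs-N>s$ and every arc out of $s$ leads strictly upward; hence no directed cycle can touch $\{\mu+1,\dots,n-1\}$. Therefore every strongly connected component carrying a cycle is contained in $\{0,\dots,\mu\}$ (of cardinality $\le\mu+1$), all other components are loopless singletons, and a short check shows that no arc runs from such a component into a recurrent one; ordering the vertices with the recurrent components first and the remaining ones in a topological order of the condensation, $T_{n,g}(p)$ becomes permutation--similar to $\left(\begin{smallmatrix}B&0\\ C&M\end{smallmatrix}\right)$ with $B$ of size $\le\mu+1$ and $M$ strictly lower triangular, hence nilpotent. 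Thus $T_{n,g}(p)$ has at most $\mu+1=\left\lfloor N/(g-1)\right\rfloor+1$ nonzero eigenvalues, a bound independent of $n$. Consequently $\Sigma_{\lambda}(F,T_{n,g}(p))=F(0)+\frac1n\sum_{\mathrm{outliers}}\bigl(F(\lambda)-F(0)\bigr)\to F(0)$ for every $F\in\mathcal{C}_{0}(\mathbb{C})$, i.e. $\{T_{n,g}(p)\}\sim_{\lambda}(0,\mathbb{T})$, and the number of eigenvalues outside any disc $D(0,\epsilon)$ never exceeds $\mu+1$, which is exactly strong attraction of $\{0\}$ with infinite order.

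For a general $f\in L^\infty(\mathbb{T})$ I would pass through the Ces\`{a}ro (Fej\'{e}r) means $p_N$ of $f$, so that $\|p_N\|_\infty\le\|f\|_\infty$ and $\|f-p_N\|_{L^2}\to0$. Since $r-gs=k$ has at most $n/g+1$ solutions $(r,s)\in\{0,\dots,n-1\}^2$ for each $k$, one gets $\|T_{n,g}(h)\|_2\le c\sqrt n\,\|h\|_{L^2}$, hence $\|T_{n,g}(f-p_N)\|_1\le\sqrt n\,\|T_{n,g}(f-p_N)\|_2\le c\,n\,\|f-p_N\|_{L^2}$. Expanding $T_{n,g}(f)^k=(T_{n,g}(p_N)+T_{n,g}(f-p_N))^k$, the pure $p_N$ term satisfies $\frac1n\tr\bigl(T_{n,g}(p_N)^k\bigr)=O(1/n)$ by the polynomial case (finitely many nonzero eigenvalues, of modulus $O(1)$), while every other term contains a factor $T_{n,g}(f-p_N)$ and is bounded in trace norm by $c\,n\,\|f-p_N\|_{L^2}\,(2c_g\|f\|_\infty)^{k-1}$ --- uniform boundedness of $\{T_{n,g}(\cdot)\}$ in spectral norm being inherited from the operator $T_{f,g}$; letting $n\to\infty$ and then $N\to\infty$ gives $\frac1n\tr\bigl(T_{n,g}(f)^k\bigr)\to0$ for every fixed $k\ge1$. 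To convert this vanishing of all moments into the eigenvalue distribution I would use the uniform localization results (Corollary \ref{c0} and \cite{32sss,16dgmss}), which keep all the $\lambda_j(T_{n,g}(f))$ in a fixed compact set $K$ that is ``thin'' --- of empty interior and connected complement --- and then follow Tilli's argument \cite{36sss,33sss}: by the Lavrentiev--Mergelyan theorem \cite{22sss} every $F\in\mathcal{C}_{0}(\mathbb{C})$ is a uniform limit on $K$ of holomorphic polynomials $q_m$, for which $\Sigma_{\lambda}(q_m,T_{n,g}(f))\to q_m(0)$ by the moment estimate while $q_m(0)\to F(0)$; passing to the limit yields $\{T_{n,g}(f)\}\sim_{\lambda}(0,\mathbb{T})$. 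Weak clustering at $\mathcal{ER}(\theta_gf)=\{0\}$ then follows from Remark \ref{7r}, and the infinite order of attraction from a refinement of the same scheme (as carried out in \cite{en} via Theorem \ref{19t}).

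The step I expect to be the real obstacle is precisely this last one. For a merely bounded, non-continuous symbol the matrices $T_{n,g}(f)$ are non-Hermitian and spectrally unstable --- an $O(1/n)$ perturbation of a nilpotent can push an entire circle of eigenvalues out to modulus close to $1$ --- so the $L^\infty$ case cannot be deduced from the polynomial case by any naive norm estimate. What makes the proof go through is the combination of the robust vanishing of the traces $\frac1n\tr(T_{n,g}(f)^k)$ with the confinement of the eigenvalues to a thin set on which holomorphic polynomials are dense in $\mathcal{C}_{0}$; establishing (or quoting in exactly the form needed) the thinness of that localization set is where the difficulty is concentrated.
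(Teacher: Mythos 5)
The paper itself gives no proof of Theorem~\ref{21t}; it is quoted from \cite{en}, with the explicit indication that the proof there goes through Theorem~\ref{19t}, i.e.\ by exhibiting $T_{n,g}(f)$ as a Hermitian sequence distributed as $\theta_g f$ plus a perturbation whose trace norm is $o(n)$. Your proposal takes a genuinely different route, and parts of it are correct and nice: for a trigonometric polynomial symbol $p$ of degree $N$ the block-triangular observation (upper right block vanishes, lower right block strictly lower triangular, so at most $\lfloor N/(g-1)\rfloor+1$ nonzero eigenvalues) is sound and gives strong clustering outright; and the moment estimate $\frac1n\tr\bigl(T_{n,g}(f)^k\bigr)\to0$ for $g\ge2$ via Ces\`aro means together with the Frobenius bound $\|T_{n,g}(h)\|_2\le c\sqrt n\,\|h\|_{L^2}$ is also correct.

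The difficulty is, however, worse than the way you phrase it (``quoting in exactly the form needed''). All that Lemma~\ref{16l} gives is confinement of $\Lambda_n$ to the disc $\{|z|\le\|f\|_\infty\}$, which has nonempty interior, and there is no general localization result forcing the spectra of non-Hermitian $g$-Toeplitz matrices with merely $L^\infty$ symbols into a set with empty interior and connected complement; the references you invoke (Corollary~\ref{c0}, \cite{32sss,16dgmss}) do not supply this. Without such confinement, Lavrentiev--Mergelyan does not apply, and the vanishing of the holomorphic moments does not pin down the eigenvalue distribution --- those moments are equally consistent with, e.g., eigenvalues spread uniformly over a centered disc. What is needed to break the circle is precisely an additional quantitative control of the type in item \textbf{(i3)} of Lemma~\ref{20l}, a Schatten $q$-norm bound for $\|Q(T_{n,g}(f))\|_q$ in terms of $\int_G|Q(\theta_g f)|^q$, which by the Markov/Chebyshev-type outlier count in the proof of Lemma~\ref{20l} yields weak clustering at $\mathcal{A}rea(\mathcal{ER}(\theta_g f))=\{0\}$ and thereby replaces the Mergelyan density step; alternatively one needs the Hermitian-plus-$o(n)$ decomposition demanded by Theorem~\ref{19t}, which is what the paper indicates \cite{en} does. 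Finally, even a completed version of your argument would deliver only $\{T_{n,g}(f)\}\sim_\lambda(0,\mathbb T)$ and hence weak clustering ($o(n)$ outliers), whereas the statement also asserts infinite order of attraction at every point of $\mathcal{ER}(\theta_g f)$, which requires an $O(1)$ outlier bound and is left by you to an unspecified ``refinement''.
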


   \begin{remark}\label{8r}
   It comes from Theorem $\ref{21t}$ that every matrix-sequence $\{T_{n(k),g}(f)\}_{k}$ such that, $\min n_{j}(k)\rightarrow\infty$ is
   weakly clustered at $\mathcal{ER}(\theta^{(g)}_{f})$ in the sense of the eigenvalues.
   \end{remark}

    \begin{remark}
   It is not hard to see that $\theta_{f}^{(g)},$ defined in (\cite{en}, relation $(26)$) equals to $\theta_{g}f,$
   given by $(\ref{22}).$
     \end{remark}

   We introduce another interesting notion concerning the eigenvalues of a matrix sequence.

   \begin{definition}\label{9l}
   Let $W$ be a compact subset of $\mathbb{C}.$ The area of $W,$ denoted $\mathcal{A}rea(W),$ is defined by
   \begin{equation*}
    \mathcal{A}rea(W):=\mathbb{C}\setminus U,
   \end{equation*}
   where $U$ is the "unique" unbounded connected component of $\mathbb{C}\setminus W.$
   \end{definition}

    This section ends with the vector space of finite dimension given by relation $(\ref{13})$ which plays a crucial role in
    establishing the proof of Proposition $\ref{28p}$ (a main tool in proving our original result, namely, Theorem $\ref{23t}$).\\

   Let $\mathcal{V}_{n}(z),$ be the subspace of $\mathcal{H}^{2},$ spanned by the set of monomials of degree "less than"
   $z^{n},$ that is,
   \begin{equation}\label{13}
    \mathcal{V}_{n}(z)=span\{z^{j},j=0,1,...,n-1\}.
   \end{equation}

   This is the idea to be used in the proof of Proposition $\ref{28p}.$ In the case $g=1,$ see \cite{35sss,8sss} for a detail to several
   variables (multilevel case) and matrix-valued functions. In addition, we recall that the asymptotic distribution of the
   eigenvalues and singular values of a sequence of Toeplitz matrices has been deeply studied in the last century, and strictly
   depends on the generating functions (see \cite{8sss,35sss,38sss} and references therein).\\

   Armed with the above definitions and notions, we are ready to state the main tools that we shall use for the proof of our
   original contribution (Theorem $\ref{23t}$).\\

   \section{Spectral distribution results for the sequences of products\\
   $\{T_{n,g}(f_{1})T_{n,g}(f_{2})\}_{n\in\mathbb{N}}$}\label{III}

   This section deals with the spectral distribution in the eigenvalues sequence of the products of $g$-Toeplitz structures,
   $\{T_{n,g}(f_{1})T_{n,g}(f_{2})\},$ in the case where the symbols $f_{1},f_{2}\in L^{\infty}(G^{d}).$ For the sake of readability,
   we focus our attention on the case $d=1.$ The case where $d>1$ will be the subject of our future investigations.\\

   Let $f\in L^{1}(G),$ where $G\cong\mathbb{T}=(-\pi,\pi].$ We recall that a matrix $T_{n,g}(f),$ is called $g$-Toeplitz
   if its entries obey the rule
   \begin{equation}\label{10}
    T_{n,g}(f)=\left[\hat{f}_{r-gs}\right]_{r,s=0}^{n-1},
   \end{equation}
   where the indices, $r-gs,$ are not reduced modulus $n$ as in the circulant case. In analogy with the case of
   $g=1,$ $\{a_{k}\}_{k},$ is the sequence of Fourier coefficients of a Lebesgue integrable function $f$ defined over the
   domain $G,$ i.e., $a_{k}=\widehat{f}_{k},$ defined by relation $(\ref{1}),$ with $d=1.$ Denoting by $T_{n}(f),$ the classical
   Toeplitz matrix also generated by the symbol $f,$ that is, $T_{n}(f)=\left[\hat{f}_{r-s}\right]_{r,s=0}^{n-1},$ the
   authors (\cite{nss}, page $12$) proved that for $n$ and $g$ generic, the following equality holds
   \begin{equation}\label{11}
    T_{n,g}(f)=T_{n}(f)[\widehat{Z}_{n,g}|0]+[0|\mathcal{T}_{n,g}],
   \end{equation}
   where $\mathcal{T}_{n,g}\in\mathbb{C}^{n\times(n-\mu_{g})}$ ($\mu_{g}=\lceil\frac{n}{g}\rceil$) is the matrix $T_{n,g}(f),$ defined
   in relation $(\ref{10})$ by considering only the $n-\mu_{g}$ last columns of $T_{n,g}(f)$ and $\widehat{Z}_{n,g}$ is the submatrix of
   $Z_{n,g}$ defined in relation $(\ref{12})$ by considering only the $\mu_{g}$ first columns of $Z_{n,g}$, where
    \begin{equation}\label{12}
    Z_{n,g}=\left[\delta_{r-gs}\right]_{r,s=0}^{n-1}\text{\,\,\,and\,\,\,}\delta_{k}=\left\{
                                                                                            \begin{array}{ll}
                                                                                             1,&\hbox{if $k\equiv0\mod n;$}\\
                                                                                               0,&\hbox{otherwise.}
                                                                                               \end{array}
                                                                                             \right.
   \end{equation}

    The following results play a crucial role in our analysis.

   \begin{lemma}\label{15l}\cite{en,nss}
    Let $\widehat{Z}_{n,g},$ be the submatrix defined in relation $(\ref{12})$ by considering only the
    $\mu_{g}=\lceil\frac{n}{g}\rceil,$ first columns, then

    \begin{equation}\label{15}
    \|[\widehat{Z}_{n,g}|0]\|=1.
    \end{equation}
   \end{lemma}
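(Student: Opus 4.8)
The plan is to show that the matrix $[\widehat{Z}_{n,g}|0]$, obtained by appending zero columns to $\widehat{Z}_{n,g}$, has spectral norm exactly $1$. Since appending zero columns does not change any singular value, it suffices to prove $\|\widehat{Z}_{n,g}\|=1$, where $\widehat{Z}_{n,g}$ consists of the first $\mu_g=\lceil n/g\rceil$ columns of $Z_{n,g}=[\delta_{r-gs}]_{r,s=0}^{n-1}$ with $\delta_k=1$ iff $k\equiv 0 \bmod n$. The key structural observation is that each column of $\widehat{Z}_{n,g}$ is either a standard basis vector $e_r$ of $\mathbb{C}^n$ (when there is an $r\in\{0,\dots,n-1\}$ with $r\equiv gs \bmod n$, which always happens since $gs$ is reduced mod $n$) or the zero vector. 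In fact for each $s$ with $0\le s\le \mu_g-1$ there is exactly one row index $r=gs \bmod n$ with a $1$, so every column is a distinct(?) coordinate vector — and here one must check that the column indices $s=0,\dots,\mu_g-1$ produce distinct residues $gs \bmod n$, or at least that no column is repeated in a way that inflates the norm.

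First I would argue the upper bound $\|\widehat{Z}_{n,g}\|\le 1$. The cleanest route is to observe that $\widehat{Z}_{n,g}$ is a submatrix (a selection of columns) of $Z_{n,g}$, and $Z_{n,g}$ in turn is a submatrix of the $n\times n$ $g$-circulant permutation-type matrix; more directly, every entry of $\widehat{Z}_{n,g}$ is $0$ or $1$ and every row and every column contains at most one nonzero entry. A $0/1$ matrix with at most one $1$ per row and at most one $1$ per column is a partial permutation matrix, and such a matrix satisfies $\widehat{Z}_{n,g}^*\widehat{Z}_{n,g} = \mathrm{diag}(c_0,\dots,c_{\mu_g-1})$ with each $c_s\in\{0,1\}$ (it equals $1$ if column $s$ is nonzero, $0$ otherwise); hence all singular values lie in $\{0,1\}$ and $\|\widehat{Z}_{n,g}\|\le 1$. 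The only thing to verify carefully is the "at most one $1$ per column" claim — that is immediate from the definition since for fixed $s$, $\delta_{r-gs}=1$ forces $r\equiv gs\bmod n$, which pins down $r$ uniquely in $\{0,\dots,n-1\}$.

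For the lower bound $\|\widehat{Z}_{n,g}\|\ge 1$ I need at least one nonzero column, i.e.\ I must exhibit an $s\in\{0,\dots,\mu_g-1\}$ and an $r\in\{0,\dots,n-1\}$ with $r=gs\bmod n$; then $\widehat{Z}_{n,g}e_s = e_r$ has norm $1$, giving $\|\widehat{Z}_{n,g}\|\ge \|\widehat{Z}_{n,g}e_s\|/\|e_s\| = 1$. Taking $s=0$ works trivially: $r=0$ gives $\delta_{0}=1$. Combining the two bounds yields $\|\widehat{Z}_{n,g}\|=1$, and therefore $\|[\widehat{Z}_{n,g}|0]\|=1$.

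I do not expect a serious obstacle here; the statement is essentially a bookkeeping fact about partial permutation matrices. The one point deserving genuine care is confirming the structural claim in all edge cases — in particular when $g=0$ or when $g\ge n$, where $gs\bmod n$ may collapse several column indices onto the same residue (for $g=0$ every column would be $e_0$). Even then, $\widehat{Z}_{n,g}^*\widehat{Z}_{n,g}$ need no longer be diagonal, so I would instead bound $\|\widehat{Z}_{n,g}\| \le \|\widehat{Z}_{n,g}\|_F^{1/1}$ crudely or, better, note each row still has at most one nonzero entry equal to $1$, so $\widehat{Z}_{n,g}\widehat{Z}_{n,g}^*$ is diagonal with $0/1$ entries, again forcing singular values in $\{0,1\}$; this row-side argument is robust to column collisions and covers every case uniformly. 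That observation — work with whichever of $\widehat{Z}\widehat{Z}^*$ or $\widehat{Z}^*\widehat{Z}$ is diagonal — is what I would actually write down, and it makes the proof a few lines.
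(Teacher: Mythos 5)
Your proof is correct and matches the approach the paper itself uses implicitly: the key fact, which also appears later in the proof of Proposition~\ref{30p}, is that $[\widehat{Z}_{n,g}|0]^{*}[\widehat{Z}_{n,g}|0]=\begin{bmatrix}I_{\mu_g}&0\\0&0\end{bmatrix}$, an orthogonal projection, so all singular values lie in $\{0,1\}$ and the largest is $1$. One small simplification you could make: the column-collision worry is vacuous here, because for $s\le\mu_g-1=\lceil n/g\rceil-1$ one has $gs\le n-1$ (for $g\ge 1$), so the index $r-gs$ never needs reduction mod $n$ and the columns are automatically the distinct standard vectors $e_{0},e_{g},\dots,e_{g(\mu_g-1)}$; the case $g=0$ is excluded since $\mu_g=\lceil n/g\rceil$ would be undefined.
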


     \begin{lemma}\label{16l}\cite{en}
    For every $f\in L^{\infty}(G),$ the matrix-sequence $\{T_{n,g}(f)\},$ is uniformly bounded by a positive constant,
    $\widehat{C}=\|f\|_{L^{\infty}(G)},$ independent of $n.$
   \end{lemma}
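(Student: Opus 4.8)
The plan is to exhibit $T_{n,g}(f)$ as a compression of the bi-infinite Laurent operator generated by $f$, whose operator norm equals $\|f\|_{L^{\infty}(G)}$ exactly; the desired uniform bound $\widehat{C}=\|f\|_{L^{\infty}(G)}$ then drops out, with a constant that visibly does not depend on $n$.

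First I would recall the classical fact that the Laurent operator $L(f)$ on $\ell^{2}(\mathbb{Z})$, whose matrix entries in the standard basis are $(L(f))_{j,k}=\widehat{f}_{j-k}$, is unitarily equivalent, through the Fourier transform, to multiplication by $f$ on $L^{2}(G)$; hence $\|L(f)\|_{\mathcal{B}(\ell^{2}(\mathbb{Z}))}=\|f\|_{L^{\infty}(G)}$, which is finite precisely because $f\in L^{\infty}(G)$, and is of course independent of $n$.

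Next, I would fix $n$ and an arbitrary $x=(x_{0},\ldots,x_{n-1})^{T}\in\mathbb{C}^{n}$ with $\|x\|_{2}=1$, and introduce the vector $y\in\ell^{2}(\mathbb{Z})$ defined by $y_{gs}:=x_{s}$ for $s=0,1,\ldots,n-1$ and $y_{k}:=0$ for every other $k\in\mathbb{Z}$. Since $g$ is a positive integer, the indices $gs$, $s=0,\ldots,n-1$, are pairwise distinct, so $x\mapsto y$ is an isometric embedding and $\|y\|_{\ell^{2}(\mathbb{Z})}=\|x\|_{2}=1$. A direct computation from $(\ref{10})$ then gives, for each $r\in\{0,\ldots,n-1\}$,
\[
(T_{n,g}(f)x)_{r}=\sum_{s=0}^{n-1}\widehat{f}_{r-gs}\,x_{s}=\sum_{k\in\mathbb{Z}}\widehat{f}_{r-k}\,y_{k}=(L(f)y)_{r},
\]
so that the vector $T_{n,g}(f)x\in\mathbb{C}^{n}$ is merely the restriction of $L(f)y$ to the coordinates $0,\ldots,n-1$. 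Therefore
\[
\|T_{n,g}(f)x\|_{2}\le\|L(f)y\|_{\ell^{2}(\mathbb{Z})}\le\|L(f)\|\,\|y\|_{\ell^{2}(\mathbb{Z})}=\|f\|_{L^{\infty}(G)},
\]
and, taking the supremum over all unit vectors $x\in\mathbb{C}^{n}$, one obtains $\|T_{n,g}(f)\|\le\widehat{C}=\|f\|_{L^{\infty}(G)}$ for every $n$.

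I do not expect a genuine obstacle here: the only point demanding a little care is that the re-indexing $x\mapsto y$ must be norm-preserving, and this is exactly where the integrality and positivity of $g$ enter — if two of the indices $gs$ coincided, the embedding would fail to be an isometry and the estimate would break. An alternative would be to start from the factorization $(\ref{11})$, namely $T_{n,g}(f)=T_{n}(f)[\widehat{Z}_{n,g}|0]+[0|\mathcal{T}_{n,g}]$, invoke $\|[\widehat{Z}_{n,g}|0]\|=1$ from Lemma $\ref{15l}$ together with the standard Toeplitz bound $\|T_{n}(f)\|\le\|f\|_{L^{\infty}(G)}$; however, since the block $\mathcal{T}_{n,g}$ is itself a column-submatrix of $T_{n,g}(f)$, bounding its norm again reduces to the compression-of-a-Laurent-operator estimate above, so I would prefer to present the direct argument.
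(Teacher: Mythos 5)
Your proof is correct. Since the paper only quotes Lemma \ref{16l} from \cite{en} without reproducing a proof, there is nothing in this text to compare it against; but the compression argument you give is the natural one and, importantly, it yields the \emph{exact} constant $\widehat{C}=\|f\|_{L^{\infty}(G)}$ claimed in the statement. Both essential ingredients are in place: first, that the Laurent operator $L(f)$ is unitarily equivalent, via the discrete Fourier transform, to multiplication by $f$ on $L^{2}(G)$, whence $\|L(f)\|=\|f\|_{L^{\infty}(G)}$; second, that the map $x\mapsto y$ placing $x_{s}$ at position $gs$ is an isometry into $\ell^{2}(\mathbb{Z})$ precisely because $g$ is a positive integer (for $g=0$ all the indices $gs$ collapse to $0$, the map is no longer injective, and the statement itself fails). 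With these two facts one has $T_{n,g}(f)=P_{n}L(f)V_{n,g}$, where $P_{n}$ is the orthogonal projection onto coordinates $0,\ldots,n-1$ and $V_{n,g}$ is your isometric embedding, giving $\|T_{n,g}(f)\|\le\|P_{n}\|\,\|L(f)\|\,\|V_{n,g}\|=\|f\|_{L^{\infty}(G)}$ for every $n$.

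You are also right to prefer the direct argument over the decomposition $T_{n,g}(f)=T_{n}(f)[\widehat{Z}_{n,g}|0]+[0|\mathcal{T}_{n,g}]$ of relation \pref{11}. A crude triangle inequality there gives $\|T_{n,g}(f)\|\le\|f\|_{L^{\infty}(G)}+\|[0|\mathcal{T}_{n,g}]\|$, which overshoots the stated constant; and even exploiting the disjoint column supports of the two blocks, one is ultimately driven back to bounding $\|\mathcal{T}_{n,g}\|$ by the very Laurent-compression estimate you use. So the route you chose is not only cleaner but is the one that actually delivers the sharp value of $\widehat{C}$.
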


    \begin{lemma}\label{17l}\cite{en}
    Let $f\in L^{\infty}(G),$ then the following relation holds
    \begin{equation}\label{18}
    \|[0|\widetilde{\mathcal{T}}_{n,g}]\|_{1}=o(n),\text{\,\,\,\,\,\,\,\,\,\,\,\,}n\rightarrow\infty.
    \end{equation}
   \end{lemma}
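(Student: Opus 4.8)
\noindent{\bf Proof strategy.\ } The plan is to exploit the structural identity \pref{11}: the matrix $\mathcal{T}_{n,g}$ there is nothing but the last $n-\mu_{g}$ columns of $T_{n,g}(f)$, so its $(r,s)$-entry equals $\widehat{f}_{r-gs}$ with $0\le r\le n-1$ and $\mu_{g}\le s\le n-1$. The first thing I would record is an elementary but decisive remark on the indices: since $s\ge\mu_{g}=\lceil n/g\rceil$ forces $gs\ge n$, every index $r-gs$ is negative, indeed $r-gs\le n-1-gs\le -1$; more precisely, the $s$-th column of $\mathcal{T}_{n,g}$ only involves Fourier coefficients $\widehat{f}_{k}$ with $N_{s}:=gs-(n-1)\le|k|\le gs$, where $N_{s}\ge 1$ and $N_{s+1}=N_{s}+g$. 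Because $f\in L^{\infty}(G)\subset L^{2}(G)$, Bessel's inequality gives $\sum_{k}|\widehat{f}_{k}|^{2}\le\|f\|_{L^{2}(G)}^{2}<\infty$, hence the tails $\varepsilon_{N}^{2}:=\sum_{|k|\ge N}|\widehat{f}_{k}|^{2}$ tend to $0$ as $N\to\infty$; this is the only analytic input needed.

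Next, I would fix an integer threshold $N\ge 1$ and split the columns of $\mathcal{T}_{n,g}$ into two blocks: $\mathcal{T}^{(1)}$, collecting the columns with $N_{s}\le N$, and $\mathcal{T}^{(2)}$, the remaining ones. A quick count using $s\ge\mu_{g}\ge n/g$ and $N_{s}\le N\iff s\le(n-1+N)/g$ shows that $\mathcal{T}^{(1)}$ has at most $N$ columns, so $\mathrm{rank}(\mathcal{T}^{(1)})\le N$. By the triangle inequality for $\|\cdot\|_{1}$ and the invariance of the singular values under insertion of zero columns, it suffices to bound $\|[0|\mathcal{T}^{(1)}]\|_{1}$ and $\|[0|\mathcal{T}^{(2)}]\|_{1}$ separately. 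For the tail block I would use $\|M\|_{1}\le\sqrt{\mathrm{rank}(M)}\,\|M\|_{F}\le\sqrt{n}\,\|M\|_{F}$ together with the fact that each column of $\mathcal{T}^{(2)}$ has squared Euclidean norm at most $\sum_{|k|\ge N+1}|\widehat{f}_{k}|^{2}\le\varepsilon_{N}^{2}$; since there are at most $n$ columns this gives $\|[0|\mathcal{T}^{(2)}]\|_{1}\le\sqrt{n}\cdot\sqrt{n}\,\varepsilon_{N}=n\,\varepsilon_{N}$. For the head block, since $\mathcal{T}^{(1)}$ is a submatrix of $T_{n,g}(f)$ of rank at most $N$, I would use $\|[0|\mathcal{T}^{(1)}]\|_{1}\le N\,\|[0|\mathcal{T}^{(1)}]\|\le N\,\|T_{n,g}(f)\|\le N\widehat{C}$ with $\widehat{C}=\|f\|_{L^{\infty}(G)}$ by Lemma \ref{16l} (alternatively a Frobenius estimate gives $\|\mathcal{T}^{(1)}\|_{1}\le N\|f\|_{L^{2}(G)}$, which is equally sufficient).

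Combining the two bounds yields $\frac{1}{n}\|[0|\mathcal{T}_{n,g}]\|_{1}\le\frac{N\widehat{C}}{n}+\varepsilon_{N}$. Given $\eta>0$, one first chooses $N$ with $\varepsilon_{N}<\eta/2$ and then $n$ large enough that $N\widehat{C}/n<\eta/2$, which proves $\|[0|\widetilde{\mathcal{T}}_{n,g}]\|_{1}=o(n)$ as $n\to\infty$ (reading $\widetilde{\mathcal{T}}_{n,g}$ as the matrix $\mathcal{T}_{n,g}$ of \pref{11}). I expect the only genuine obstacle to be the temptation to estimate all of $\mathcal{T}_{n,g}$ at once: a single Frobenius bound only gives $\|\mathcal{T}_{n,g}\|_{F}=O(\sqrt{n})$ and hence $\|\mathcal{T}_{n,g}\|_{1}=O(n)$, not $o(n)$, while forcing a column-by-column decay estimate through the whole sum would require $\sum_{k}|k|\,|\widehat{f}_{k}|^{2}<\infty$, which is false for general $f\in L^{\infty}(G)$. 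It is precisely the two-block splitting — an $O(N)$-rank head controlled by uniform boundedness and a tail controlled by the $\ell^{2}$-decay of $\{\widehat{f}_{k}\}$, with $N$ sent to infinity after $n$ — that produces the $o(n)$, and the one place requiring care is the bookkeeping of the index windows around the ceiling $\mu_{g}=\lceil n/g\rceil$.
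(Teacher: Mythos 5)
Your argument is correct, and it establishes the $o(n)$ bound claimed. Note, however, that the paper does not prove this lemma in-text --- it is cited from \cite{en} --- so there is no in-paper proof against which to compare. That said, the decomposition you use is the natural one: split $\mathcal{T}_{n,g}$ into a low-rank head $\mathcal{T}^{(1)}$ (at most $N$ columns, controlled by rank times the uniform operator-norm bound of Lemma~\ref{16l}, or by your Frobenius alternative) and a tail $\mathcal{T}^{(2)}$ (controlled by $\|M\|_{1}\le\sqrt{\mathrm{rank}(M)}\,\|M\|_{F}$ together with the Bessel tail $\varepsilon_{N}\to 0$). This is precisely what converts $\ell^{2}$-summability of $\{\widehat{f}_{k}\}$ into an $o(n)$ trace-norm estimate, and you correctly observe that a single Frobenius bound only yields $O(n)$. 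The index arithmetic checks out: for $s\ge\mu_{g}=\lceil n/g\rceil$, each entry $\widehat{f}_{r-gs}$ of column $s$ satisfies $|r-gs|\ge gs-(n-1)\ge 1$, and the head has at most $\lfloor(n-1+N)/g\rfloor-\lceil n/g\rceil+1\le (N-1)/g+1\le N$ columns whenever $g\ge 2$ and $N\ge 1$ (the case $g=1$ is vacuous, since $\mu_{1}=n$ forces $\mathcal{T}_{n,1}$ to be empty and the left-hand side of \pref{18} to vanish). Finally, your reading of $\widetilde{\mathcal{T}}_{n,g}$ as the $\mathcal{T}_{n,g}$ of \pref{11} is the right one: the tilde is a typographical inconsistency, as the subsequent use of \pref{18} in the proof of Lemma~\ref{32l} confirms.
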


    Furthermore, Theorem $\ref{19t},$ based on a Mirski theorem (see \cite{2sss}, Proposition III, Section $5.3$), establishes
   a link between distribution of non-Hermitian perturbations of Hermitian matrix-sequences and distribution of the original
   sequence.

   \begin{theorem}\label{19t}(\cite{sss}, Theorem $3.4$)
   Let $\{Y_{n}\}$ and $\{Z_{n}\}$ be two matrix-sequences, where $Y_{n}$ is Hermitian and $X_{n}=Y_{n}+Z_{n}.$ Assume
   further that $\{Y_{n}\},$ is distributed as $(f,G),$ in the sense of the eigenvalues, where $G$ is of finite and
   positive Lebesgue measure, both $\{Y_{n}\}$ and $\{Z_{n}\},$ are uniformly bounded by a positive constant $\widehat{C},$
   independent of $n,$ and $\|Z_{n}\|_{1}=o(n)$, $n\rightarrow\infty.$ Then $f$ is real-valued and $\{X_{n}\}$ is
   distributed as $(f,G),$ in the sense of the eigenvalues. In particular, $\{X_{n}\}$ is weakly clustered at $\mathcal{ER}(f),$
   and $\mathcal{ER}(f),$ strongly attracts the spectra of $\{X_{n}\},$ with an infinite order of attraction for any of its
   points.\\
    For definitions and more details concerning weak attraction/strong attraction, with a finite (or infinite) order
    of attraction we refer to \cite{sss}.
   \end{theorem}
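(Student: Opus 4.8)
The plan is to combine a trace-based moment comparison of $\{X_n\}$ with $\{Y_n\}$ -- which by itself only controls holomorphic test functions -- with a quantitative estimate forcing the eigenvalues of $X_n$ onto the real axis in the limit, so that polynomial moments already determine the full limiting distribution; the Hermitian ingredient behind it is the Mirsky/Hoffman--Wielandt inequality of \cite{2sss}. First I would note that, $Y_n$ being Hermitian, $\lambda_j(Y_n)\in\reali$ for all $j,n$; hence if $f$ were non-real on a set of positive measure one could produce $F\in\mathcal{C}_0(\com)$, $F\ge0$, supported off $\reali$, with $\frac{1}{mes(G)}\int_G F(f)>0$ while $\Sigma_\lambda(F,Y_n)\equiv0$, contradicting $\{Y_n\}\sim_\lambda(f,G)$. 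So $f$ is real-valued a.e.; the same bump-function device together with $\|Y_n\|\le\widehat{C}$ gives $\mathcal{ER}(f)\subseteq[-\widehat{C},\widehat{C}]$, so $f\in L^{\infty}(G)$, and $\|X_n\|\le2\widehat{C}$ places every eigenvalue of $X_n$ inside the fixed disc $\overline{D(0,2\widehat{C})}$.

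\textbf{Matching the moments.}
Next I would exploit that $\tr p(A)=\sum_j p(\lambda_j(A))$ holds for every matrix $A$ and every polynomial $p$, \emph{regardless of normality}. From the telescoping identity $X_n^{\,k}-Y_n^{\,k}=\sum_{i=0}^{k-1}X_n^{\,i}Z_n Y_n^{\,k-1-i}$ and $\|BCD\|_1\le\|B\|\,\|C\|_1\,\|D\|$, using $\|X_n\|\le2\widehat{C}$, $\|Y_n\|\le\widehat{C}$, one gets $\|p(X_n)-p(Y_n)\|_1\le c_p\|Z_n\|_1=o(n)$, hence $\frac1n\tr p(X_n)-\frac1n\tr p(Y_n)\to0$. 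Since every eigenvalue of $Y_n$ lies in $[-\widehat{C},\widehat{C}]$, multiplying $p$ by a cutoff equal to $1$ on $\overline{D(0,\widehat{C})}$ makes it a legitimate test function, so $\{Y_n\}\sim_\lambda(f,G)$ and the previous step yield $\frac1n\tr p(Y_n)\to\frac{1}{mes(G)}\int_G p(f(t))\,dt$, and therefore $\frac1n\sum_j p(\lambda_j(X_n))\to\frac{1}{mes(G)}\int_G p(f(t))\,dt$ for every polynomial $p$. (Mirsky's theorem enters here as an alternative, applied to the Hermitian matrices $\operatorname{Re}X_n=Y_n+\operatorname{Re}Z_n$ and $Y_n$, giving $\sum_j|\lambda_j^{\downarrow}(\operatorname{Re}X_n)-\lambda_j^{\downarrow}(Y_n)|\le\|\operatorname{Re}Z_n\|_1=o(n)$.)

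\textbf{Collapsing onto $\reali$ and concluding.}
Because $Y_n$ is Hermitian, $\tfrac{1}{2i}(X_n-X_n^{\ast})=\tfrac{1}{2i}(Z_n-Z_n^{\ast})$, so $\|\tfrac{1}{2i}(X_n-X_n^{\ast})\|_1\le\|Z_n\|_1=o(n)$; writing a Schur form $X_n=U_n(D_n+N_n)U_n^{\ast}$ with $D_n=\operatorname{diag}(\lambda_j(X_n))$ and $N_n$ strictly upper triangular, the Hermitian matrix $\tfrac{1}{2i}\big((D_n+N_n)-(D_n+N_n)^{\ast}\big)$ is unitarily equivalent to $\tfrac{1}{2i}(X_n-X_n^{\ast})$ and has diagonal $(\operatorname{Im}\lambda_j(X_n))_j$, and since the diagonal of a Hermitian matrix is majorized by its spectrum, $\sum_j|\operatorname{Im}\lambda_j(X_n)|=o(n)$. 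Introduce $\nu_n:=\frac1n\sum_j\delta_{\operatorname{Re}\lambda_j(X_n)}$, a probability measure on the fixed interval $I:=[-2\widehat{C},2\widehat{C}]$; from $|q(\operatorname{Re}z)-q(z)|\le L_q|\operatorname{Im}z|$ on $\overline{D(0,2\widehat{C})}$ and the previous paragraph, the moments of $\nu_n$ converge to those of the measure $\mu$ on $I$ characterized by $\int_I F\,d\mu=\frac{1}{mes(G)}\int_G F(f(t))\,dt$, and since the $\nu_n$ live on the compact $I$ and $\mu$ is determined by its moments there, $\nu_n\Rightarrow\mu$ weakly. Finally, for $F\in\mathcal{C}_0(\com)$ one approximates $F$ uniformly on $\overline{D(0,2\widehat{C})}$ by a Lipschitz $\widetilde F$ up to $\epsilon$; then $|\Sigma_\lambda(F,X_n)-\Sigma_\lambda(\widetilde F,X_n)|\le\epsilon$, $|\Sigma_\lambda(\widetilde F,X_n)-\int_I\widetilde F\,d\nu_n|\le L_{\widetilde F}\tfrac1n\sum_j|\operatorname{Im}\lambda_j(X_n)|=o(1)$, and $\int_I\widetilde F\,d\nu_n\to\frac{1}{mes(G)}\int_G\widetilde F(f(t))\,dt$ (here $f$ real is used); letting $\epsilon\to0$ gives $\{X_n\}\sim_\lambda(f,G)$. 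Weak clustering at $\mathcal{ER}(f)$ is then Remark~\ref{7r}, and strong attraction with infinite order at any $z_0\in\mathcal{ER}(f)$ follows since $mes\{t:|f(t)-z_0|<r\}>0$ for every $r>0$, so a nonnegative bump at $z_0$ drives $\Theta(n)$ eigenvalues of $X_n$ into $D(z_0,r)$, just as for Theorem~\ref{21t}.

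\textbf{The main obstacle.}
The hard part is the non-normality of $X_n$: for a general $F\in\mathcal{C}_0(\com)$ one does \emph{not} have $\Sigma_\lambda(F,X_n)=\frac1n\tr F(X_n)$, so the trace comparison of the second step pins down only holomorphic data about the eigenvalues. Bridging the gap to all of $\mathcal{C}_0(\com)$ is exactly what the estimate $\sum_j|\operatorname{Im}\lambda_j(X_n)|=o(n)$ is for -- it is the step where the hypothesis $\|Z_n\|_1=o(n)$ is really spent. If one prefers to keep everything inside the Mirsky framework, the corresponding delicate point is the comparison of $\operatorname{Re}\lambda_j(X_n)$ with $\lambda_j(\operatorname{Re}X_n)$, which are not equal for non-normal $X_n$ and must again be reconciled through the smallness of the anti-Hermitian part of $X_n$.
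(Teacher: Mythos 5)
The paper does not prove this statement at all: it is quoted verbatim from \cite{sss} (Theorem~3.4 there) with no argument supplied, the only hint being the remark preceding it that the result ``is based on a Mirski theorem (see \cite{2sss}, Proposition~III, Section~5.3).'' So there is no in-paper proof to compare against line by line, and the relevant question is simply whether your reconstruction is sound and consistent with that hint. It is. The real-valuedness of $f$ and the inclusion $\mathcal{ER}(f)\subseteq[-\widehat{C},\widehat{C}]$ follow exactly as you argue from Hermiticity of $Y_n$ together with bump-function test functions; the telescoping/H\"older estimate $\|p(X_n)-p(Y_n)\|_1\le c_p\|Z_n\|_1=o(n)$ correctly transfers the polynomial moments; and the Schur-form step, combined with Schur's majorization of the diagonal of a Hermitian matrix by its spectrum so that $\sum_j|\operatorname{Im}\lambda_j(X_n)|\le\|\tfrac{1}{2i}(Z_n-Z_n^{*})\|_1\le\|Z_n\|_1=o(n)$, is precisely the Mirsky/Hoffman--Wielandt-type input the paper alludes to. The passage from polynomial moments to all $F\in\mathcal{C}_0(\mathbb{C})$ via the compactly supported empirical measures $\nu_n$, moment-determinacy on a fixed compact interval, and the Lipschitz correction $\bigl|\Sigma_\lambda(F,X_n)-\int F\,d\nu_n\bigr|\le L_F\cdot\tfrac1n\sum_j|\operatorname{Im}\lambda_j(X_n)|=o(1)$ is clean, as is the final deduction of weak clustering (Remark~\ref{7r}) and infinite-order attraction from positivity of $\frac{1}{mes(G)}\int_G F(f)$ for bumps at points of $\mathcal{ER}(f)$. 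One presentational point you handle implicitly but correctly: $\{Y_n\}\sim_\lambda(f,G)$ only controls compactly supported $F$, so the polynomial $p$ in the moment step must first be multiplied by a cutoff equal to $1$ on $\overline{D(0,\widehat{C})}$; this is harmless precisely because every $\lambda_j(Y_n)$ and $\mathcal{ER}(f)$ lie in that disc. No genuine gap.
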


   The following theorem is an important tool that we shall use to prove Lemma $\ref{20l}$ which helps in the proof of
   Theorem $\ref{23t}$.

   \begin{theorem}\label{20t} (\cite{sss}, Theorem $4.1$)
   Let $\{X_{n}\}$ be a matrix sequence, with $X_{n}$ of size $d_{n},$ tending to infinity and $S$ a subset of $\mathbb{C}$.
   If
   \begin{description}
     \item[(i1)] $S$ is a compact subset and $\mathbb{C}\setminus S$ is connected;
     \item[(i2)] the matrix sequence $\{X_{n}\}$ is weakly clustered at $S;$
     \item[(i3)] the spectrum $\Lambda_{n}$ of $X_{n},$ is uniformly bounded, i.e., $\lambda\leq C$, $\lambda\in\Lambda_{n},$
     for all n, and for some positive constant $C,$ independent of $n;$
     \item[(i4)] there exists a measurable function $h\in L^{\infty}(G),$ having positive and finite Lebesgue measure, such that,
    for every positive integer $l,$ we have: $\underset{n\rightarrow\infty}{\lim}\frac{tr(X_{n}^{l})}{n}=\frac{1}{mes(G)}
    \int_{G}h^{l}(t)dt,$ that is, relation $(\ref{5})$ holds with F being any polynomial of an arbitrary fixed degree; If, further
     \item[(i5)] $\mathcal{ER}(h)$ is contained in $S;$\\
     then relation $(\ref{5})$ is true for every continuous function F with bounded support, which is holomorphic in the interior of
    $S.$\\
     If, in addition, it is also true that the interior of $S$ is empty then
    \begin{equation*}
    \{X_{n}\}\sim_{\lambda}(h,G).
    \end{equation*}
   \end{description}
   \end{theorem}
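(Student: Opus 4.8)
The strategy is to deduce the case of a general test function $F$ from the polynomial case granted by hypothesis (i4), using Mergelyan's theorem \cite{22sss} to approximate $F$ by polynomials on the cluster set $S$, and the two boundedness hypotheses (i2)--(i3) to dispose of the $o(d_{n})$ eigenvalues that escape every neighbourhood of $S$. Throughout, $\Sigma_{\lambda}(\cdot,X_{n})$ denotes the normalized eigenvalue mean of $(\ref{3})$ (with $d_{n}$ in place of $n$), and for a polynomial $P(z)=\sum_{k}c_{k}z^{k}$ one has $\Sigma_{\lambda}(P,X_{n})=\frac{1}{d_{n}}\sum_{k}c_{k}\,tr(X_{n}^{k})$, so that (i4) gives exactly $\lim_{n}\Sigma_{\lambda}(P,X_{n})=\frac{1}{mes(G)}\int_{G}P(h(t))\,dt$ for every polynomial $P$.

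First I would fix $F$ continuous with bounded support and holomorphic in the interior of $S$, together with $\delta>0$. Since by (i1) $S$ is compact with connected complement and $F|_{S}$ is continuous on $S$ and holomorphic in its interior, Mergelyan's theorem yields a polynomial $P=P_{\delta}$ with $\sup_{z\in S}|F(z)-P(z)|<\delta$. Because $F-P$ is continuous on $\mathbb{C}$ and $|F-P|<\delta$ on the compact set $S$, uniform continuity furnishes an $\epsilon_{0}>0$ with $|F-P|<2\delta$ on $D(S,\epsilon_{0})$. For $0<\epsilon<\epsilon_{0}$ I would split the spectrum of $X_{n}$ into the eigenvalues lying in $D(S,\epsilon)$ and those outside: the former contribute at most $2\delta$ to $|\Sigma_{\lambda}(F,X_{n})-\Sigma_{\lambda}(P,X_{n})|$, while by (i2) there are only $o(d_{n})$ of the latter and by (i3) they all lie in $\{|z|\le C\}$, so --- using that $F$ is bounded and that $P$ is bounded on $\{|z|\le C\}$ --- they contribute $o(1)$. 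Hence $\limsup_{n}|\Sigma_{\lambda}(F,X_{n})-\Sigma_{\lambda}(P,X_{n})|\le 2\delta$.

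Next, by (i4) applied to $P$, one has $\Sigma_{\lambda}(P,X_{n})\to\frac{1}{mes(G)}\int_{G}P(h(t))\,dt$. Since $h(t)\in\mathcal{ER}(h)\subseteq S$ for almost every $t$ (Corollary \ref{c0} together with (i5)) and $\sup_{S}|F-P|<\delta$, one gets $\bigl|\frac{1}{mes(G)}\int_{G}F(h(t))\,dt-\frac{1}{mes(G)}\int_{G}P(h(t))\,dt\bigr|\le\delta$. Combining the last three estimates, $\limsup_{n}\bigl|\Sigma_{\lambda}(F,X_{n})-\frac{1}{mes(G)}\int_{G}F(h(t))\,dt\bigr|\le 3\delta$, and letting $\delta\to0$ proves $(\ref{5})$ for every such $F$. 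Finally, when the interior of $S$ is empty every $F\in\mathcal{C}_{0}(\mathbb{C})$ is (vacuously) holomorphic in the interior of $S$, so $(\ref{5})$ then holds for all test functions, i.e.\ $\{X_{n}\}\sim_{\lambda}(h,G)$.

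The delicate point is the middle step: Mergelyan's estimate is available only on $S$ itself, whereas the bulk of the spectrum sits merely in a neighbourhood $D(S,\epsilon)$ on which $F$ need not be holomorphic, so one must commit to the order of quantifiers ``first $\delta$, then the polynomial $P_{\delta}$, then $\epsilon<\epsilon_{0}(\delta,P_{\delta})$''; and it is precisely hypothesis (i3) that prevents the $o(d_{n})$ outlying eigenvalues --- harmless for the bounded function $F$ but not for the polynomial $P$ --- from spoiling the comparison $\Sigma_{\lambda}(F,X_{n})\approx\Sigma_{\lambda}(P,X_{n})$.
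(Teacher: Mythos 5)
The paper does not prove this theorem itself but cites it from \cite{sss} (Theorem~4.1); your argument reproduces the standard proof and is correct. The three-way triangle-inequality decomposition (eigenvalues near $S$ controlled by Mergelyan plus a compactness/uniform-continuity argument on a neighbourhood of $S$, eigenvalues far from $S$ controlled by (i2)+(i3), the integral side controlled by $h(t)\in\mathcal{ER}(h)\subseteq S$ a.e.\ together with (i4) for polynomials) is precisely the mechanism used both in the cited reference and, mutatis mutandis, in the paper's own proof of Lemma~\ref{20l}, including your careful note on the order of quantifiers ($\delta$, then $P_\delta$, then $\epsilon$); the appeal to (i3) to bound the polynomial $P$ on the outlying eigenvalues is the one point that is easy to overlook, and you handle it correctly.
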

   Using Theorem $\ref{20t}$ we prove the following lemma, which is a variation of \cite{29sss}, Theorem 4.4.
   \begin{lemma}\label{20l}
   Let $\{X_{n}\}$ be a matrix sequence, with $X_{n}$ of size $d_{n},$ tending to infinity. If
   \begin{description}
     \item[(i1)] the spectrum $\Lambda_{n}$ of $X_{n},$ is uniformly bounded, i.e., $\lambda\leq C$, $\lambda\in\Lambda_{n},$
     for all n, and for some positive constant $C,$ independent of $n;$
     \item[(i2)] there exists a measurable function $h\in L^{\infty}(G),$ having positive and finite Lebesgue measure, such that,
    for every positive integer $l,$ we have: $\underset{n\rightarrow\infty}{\lim}\frac{tr(X_{n}^{l})}{n}=\frac{1}{mes(G)}
    \int_{G}h^{l}(t)dt;$
     \item[(i3)] there exist constants $\widehat{C}_{1}>0,$ $\widehat{C}_{2}>0,$ independent of $n,$ and a real number,
     $q\in[1,\infty),$ independent of n, such that, $\|p(X_{n})\|_{q}^{q}\leq n\left\{\frac{\widehat{C}_{1}}{mes(G)}\int_{G}|p(h(t))|^{q}dt
     +\widehat{C}_{2}\right\}$ for every fixed polynomial $p,$ independent of $n,$ and for every $n$ large enough;\\

    then the matrix sequence $\{X_{n}\},$ is weakly clustered at $\mathcal{A}rea(\mathcal{ER}(h))$
    (see Definition $\ref{9l}$) and relation $(\ref{5})$ is true for every continuous function F with bounded support which is
    holomorphic in the interior of $\mathcal{A}rea(\mathcal{ER}(h)).$ \\
    If, in addition,
     \item[(i4)] $\mathcal{ER}(h)$ does not disconnect the complex field and the interior of $\mathcal{ER}(h)$ is empty, then
    \begin{equation*}
    \{X_{n}\}\sim_{\lambda}(h,G).
    \end{equation*}
   \end{description}
   \end{lemma}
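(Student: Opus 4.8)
The plan is to derive both assertions from Theorem~\ref{20t}, applied with the compact set $S:=\mathcal{A}rea(\mathcal{ER}(h))$ (Definition~\ref{9l}); the only step that requires genuine work is the verification of the weak--clustering hypothesis of that theorem. First I would record the elementary properties of $S$. Since $h\in L^{\infty}(G)$, the set $\mathcal{ER}(h)$ is compact (Corollary~\ref{c0}); writing $U$ for the unbounded connected component of $\mathbb{C}\setminus\mathcal{ER}(h)$, the set $U$ is open, so $S=\mathbb{C}\setminus U$ is closed and bounded, hence compact, $\mathbb{C}\setminus S=U$ is connected, and $\mathcal{ER}(h)\subseteq S$. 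Consequently hypotheses (i1) and (i5) of Theorem~\ref{20t} hold with this $S$; hypothesis (i3) of Theorem~\ref{20t} is exactly assumption (i1) of the lemma; and hypothesis (i4) of Theorem~\ref{20t} is assumption (i2) of the lemma, since the moment condition on $tr(X_{n}^{l})/n$ is relation $(\ref{5})$ with $F$ the monomial $z^{l}$, and this extends to all polynomials by linearity. Thus the whole statement will follow once we prove that $\{X_{n}\}$ is weakly clustered at $S$.

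For the weak clustering I would start from assumption (i3). Fix a polynomial $p$. Because $h(t)\in\mathcal{ER}(h)$ for almost every $t$ (Corollary~\ref{c0}), one has $\frac{1}{mes(G)}\int_{G}|p(h(t))|^{q}\,dt\le\big(\max_{z\in\mathcal{ER}(h)}|p(z)|\big)^{q}$, so (i3) gives $\|p(X_{n})\|_{q}^{q}\le n\big\{\widehat{C}_{1}(\max_{\mathcal{ER}(h)}|p|)^{q}+\widehat{C}_{2}\big\}$ for all large $n$. On the other hand the eigenvalues of $p(X_{n})$ are the numbers $p(\lambda_{j}(X_{n}))$, and since $q\ge1$ Weyl's majorant inequality yields $\sum_{j}|p(\lambda_{j}(X_{n}))|^{q}\le\|p(X_{n})\|_{q}^{q}$. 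Combining the two bounds,
\[
\sum_{j}\big|p(\lambda_{j}(X_{n}))\big|^{q}\le n\Big\{\widehat{C}_{1}\big(\max_{\mathcal{ER}(h)}|p|\big)^{q}+\widehat{C}_{2}\Big\}\qquad(\star)
\]
for every fixed polynomial $p$ and every sufficiently large $n$.

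Now suppose, for contradiction, that $\{X_{n}\}$ is not weakly clustered at $S$: there exist $\epsilon>0$, $\delta>0$ and arbitrarily large $n$ for which at least $\delta n$ eigenvalues of $X_{n}$ lie outside $D(S,\epsilon)$. By assumption (i1) of the lemma these eigenvalues belong to the compact set $K_{\epsilon}:=\{z:|z|\le C\}\setminus D(S,\epsilon)\subseteq\mathbb{C}\setminus S=U$. Cover $K_{\epsilon}$ by finitely many closed discs $\overline{B_{1}},\dots,\overline{B_{m}}$ of radius $\epsilon/2$ centred at points of $K_{\epsilon}$, so that each $\overline{B_{i}}$ is disjoint from $S$ and hence $\overline{B_{i}}\subseteq U$; by the pigeonhole principle there is a fixed index $i$ such that, for arbitrarily large $n$, at least $\delta n/m$ eigenvalues of $X_{n}$ lie in $\overline{B_{i}}$. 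Since $\overline{B_{i}}$ is a disc contained in the connected open set $U$, removing it does not disconnect $U$, i.e. $\mathbb{C}\setminus(S\cup\overline{B_{i}})=U\setminus\overline{B_{i}}$ is connected; by the Runge/Mergelyan polynomial approximation theorem there is therefore a polynomial $q_{0}$ with $\max_{S}|q_{0}|\le 1/3$ and $\min_{\overline{B_{i}}}|q_{0}|\ge 2/3$. Put $p:=2q_{0}$, so that $\max_{\mathcal{ER}(h)}|p|\le 2/3<1$ while $\min_{\overline{B_{i}}}|p|\ge 4/3>1$. For each integer $N\ge1$, applying $(\star)$ to the fixed polynomial $p^{N}$ and choosing $n$ large in our subsequence, the fact that at least $\delta n/m$ eigenvalues sit in $\overline{B_{i}}$ gives
\[
\frac{\delta n}{m}\,(4/3)^{Nq}\le\sum_{j}\big|p(\lambda_{j}(X_{n}))\big|^{Nq}\le n\Big\{\widehat{C}_{1}(2/3)^{Nq}+\widehat{C}_{2}\Big\}\le n\big(\widehat{C}_{1}+\widehat{C}_{2}\big),
\]
whence $(4/3)^{Nq}\le m(\widehat{C}_{1}+\widehat{C}_{2})/\delta$ for every $N$, which is absurd. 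Hence $\{X_{n}\}$ is weakly clustered at $S=\mathcal{A}rea(\mathcal{ER}(h))$. I expect this passage to be the crux of the proof: one must turn the single analytic estimate (i3) into an eigenvalue count, which forces both the use of Weyl majorization (to pass from Schatten norms of $p(X_{n})$ to sums of eigenvalue moduli) and the localization to one disc before invoking approximation theory, since no polynomial can be kept small on $S$ while being large on an entire annular neighbourhood of $S$.

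With the weak clustering established, all five hypotheses of Theorem~\ref{20t} hold with $S=\mathcal{A}rea(\mathcal{ER}(h))$, which gives the first assertion: relation $(\ref{5})$ holds for every continuous function $F$ with bounded support that is holomorphic in the interior of $\mathcal{A}rea(\mathcal{ER}(h))$. For the last assertion, assume in addition (i4). If $\mathcal{ER}(h)$ does not disconnect $\mathbb{C}$, then $\mathbb{C}\setminus\mathcal{ER}(h)$ is connected and, being unbounded, coincides with its own unbounded component $U$, so that $\mathcal{A}rea(\mathcal{ER}(h))=\mathbb{C}\setminus U=\mathcal{ER}(h)$; since moreover $\mathcal{ER}(h)$ has empty interior, $S$ has empty interior, and the final clause of Theorem~\ref{20t} yields $\{X_{n}\}\sim_{\lambda}(h,G)$.
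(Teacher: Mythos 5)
Your proof is correct and takes a genuinely different route from the paper's for the one nontrivial step, namely showing that $\{X_n\}$ is weakly clustered at $S=\mathcal{A}rea(\mathcal{ER}(h))$; the reduction to Theorem~\ref{20t} and the checks of its hypotheses (i1), (i3)--(i5) are the same in both. The paper argues directly: it fixes a small disc $D(x,\delta)$ disjoint from $S$, lets $Q$ be the indicator of $\overline{D(x,\delta)}$, produces by Mergelyan a polynomial $Q_\epsilon$ with $|Q-Q_\epsilon|\le\epsilon$ on $\overline{D(x,\delta)}\cup S$, and then chains H\"older's inequality, Weyl majorization and hypothesis (i3) to obtain $\gamma_n(x,\delta)\le n(1-\epsilon)^{-q}(\widehat C_1\epsilon^{q}+\widehat C_2)$, finally letting $\epsilon\to0$. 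You instead run a contradiction argument: you use Mergelyan/Runge once to build a single polynomial $p$ with $\max_{\mathcal{ER}(h)}|p|<1<\min_{\overline{B_i}}|p|$ and then apply (i3) to the powers $p^N$, pitting the exponentially growing lower bound $(\delta n/m)(4/3)^{Nq}$ against the bounded upper bound $n(\widehat C_1+\widehat C_2)$. Your powering variant has a concrete advantage worth noting: it is insensitive to the additive constant $\widehat C_2$ in (i3), since the contradiction comes from the \emph{ratio} of the two sides as $N\to\infty$. By contrast, in the paper's final estimate the $\widehat C_2$ term does not vanish as $\epsilon\to0$, so the displayed bound only gives $\gamma_n(x,\delta)\le n\,\widehat C_2+o(n)$, which is not by itself $o(n)$; your route sidesteps this. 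One caveat applies to both arguments equally: if one reads (i3) as allowing $\widehat C_1,\widehat C_2$ to depend on the polynomial $p$ (and the way (i3) is later verified in Proposition~\ref{30p} does produce a constant $\widehat C_m$ depending on $P$), then your bound $(4/3)^{Nq}\le m(\widehat C_1(p^N)+\widehat C_2(p^N))/\delta$ no longer self-destructs unless one controls the growth of those constants in $N$; under the literal reading of (i3) as stated (constants independent of $p$), your argument closes cleanly.
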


   \begin{proof}
    Since $h\in L^{\infty}(G),$ then $\mathcal{ER}(h)$ is bounded. Using the fact that the essential range is always closed,
    it is obvious that $\mathcal{ER}(h)$ is compact. Let us set $S=\mathcal{A}rea(\mathcal{ER}(h)).$ Our aim is to prove that
    $S$ is a weak cluster for the
    spectra of $\{X_{n}\}.$ Using item (\textbf{i1}) of Lemma $\ref{20l},$ it is easy to see that all the eigenvalues of $X_{n},$ for
    every $n\in\mathbb{N},$ are contained in the compact set $K_{C}=\{x\in\mathbb{C}:|x|\leq C\}.$ This shows that $K_{C}$ is a
    strong cluster for the spectra of $\{X_{n}\}.$ Moreover $C$ can be chosen such that $K_{C}$ contains $S$. Therefore, we will
    have proven that $S$ is a weak cluster for $\{X_{n}\}$ if we prove that, for every $\epsilon>0,$ the compact set
    $K_{C}\setminus D(S,\epsilon)$ contains at most only $o(n)$ eigenvalues, where $D(S,\epsilon)=\underset{x\in S}{\bigcup}D(x,\epsilon)$.
    Using the property of compact sets, for any $\delta>0,$ there exists a finite covering of $K_{C}\setminus D(S,\epsilon)$
    made of balls $D(x,\delta),$ $x\in K_{C}\setminus S$ with $D(x,\delta)\cap S=\emptyset$, and so, it suffices to show that, for
    a particular $\delta,$ at most o(n) eigenvalues lie in $D(x,\delta).$ Let $Q(t)$ be the characteristic function of the compact set
    $\overline{D(x,\delta)}$ and let $\epsilon>0,$ using the Mergelyan's theorem there exists a polynomial $Q_{\epsilon}$ such that $|Q(t)-Q_{\epsilon}(t)|\leq\epsilon$ for every $t\in\overline{D(x,\delta)}\cup S.$ Putting $\gamma_{n}(x,\delta)$ the number of
    eigenvalues of $X_{n}$ belonging to $D(x,\delta),$ let $p\in[1,\infty)$ and $q$ its conjugate, that is, $\frac{1}{p}+\frac{1}{q}=1.$
    A combination of definitions of $Q$ and $\gamma_{n}(x,\delta),$ the approximation property of $Q_{\epsilon}$ and the H\"{o}lder
    inequality gives
    \begin{eqnarray}\label{a11}
      \notag(1-\epsilon)\gamma_{n}(x,\delta) &\leq& \underset{k=1}{\overset{n}\sum}Q(\lambda_{k})|Q_{\epsilon}(\lambda_{k})| \\
      \notag &\leq& \left(\underset{k=1}{\overset{n}\sum}Q^{p}(\lambda_{k})\right)^{\frac{1}{p}}
      \left(\underset{k=1}{\overset{n}\sum}|Q_{\epsilon}(\lambda_{k})|^{q}\right)^{\frac{1}{q}} \\
      \notag &=& \left(\underset{k=1}{\overset{n}\sum}Q(\lambda_{k})\right)^{\frac{1}{p}}
      \left(\underset{k=1}{\overset{n}\sum}|Q_{\epsilon}(\lambda_{k})|^{q}\right)^{\frac{1}{q}} \\
      \notag &=& \left(\gamma_{n}(x,\delta)\right)^{\frac{1}{p}}
      \left(\underset{k=1}{\overset{n}\sum}|Q_{\epsilon}(\lambda_{k})|^{q}\right)^{\frac{1}{q}} \\
             &\leq& \left(\gamma_{n}(x,\delta)\right)^{\frac{1}{p}}\|Q_{\epsilon}(X_{n})\|_{q} \\
             &\leq& \left(\gamma_{n}(x,\delta)\right)^{\frac{1}{p}}\left(\frac{\widehat{C}_{1}n}{mes(G)}\int_{G}|Q_{\epsilon}(h(t))|^{q}dt
     +\widehat{C}_{2}n\right)^{\frac{1}{q}} \\
             &\leq& \left(\gamma_{n}(x,\delta)\right)^{\frac{1}{p}}\left(\widehat{C}_{1}n\epsilon^{q}+\widehat{C}_{2}n\right)^{\frac{1}{q}} \\
             &\leq& \left(\gamma_{n}(x,\delta)\right)^{\frac{1}{p}}n^{\frac{1}{q}}\left(\widehat{C}_{1}\epsilon^{q}
             +\widehat{C}_{2}\right)^{\frac{1}{q}}
    \end{eqnarray}
    where $(20)$ comes from the fact that, for any square matrix, the vector with the moduli of the eigenvalues is weakly-majorized
    by the vector of the singular values \cite{2sss}, estimate $(21)$ follows from assumption \textbf{(i3)} of Lemma $\ref{20l}$
    (which holds for any polynomial of fixed degree), and inequality $(22)$ follows from the approximation properties of
    $Q_{\epsilon}$ over the area delimited by the range of $h.$ Now, using the above estimates along with relation
    $\frac{1}{p}+\frac{1}{q}=1,$ simple computations provide
    \begin{equation*}
        \gamma_{n}(x,\delta)\leq n(1-\epsilon)^{-q}(\widehat{C}_{1}\epsilon^{q}+\widehat{C}_{2}),
    \end{equation*}
    and since $\epsilon$ is arbitrary we get $\gamma_{n}(x,\delta)=o(n).$ Hence, assumptions \textbf{(i1)-(i5)} of Theorem $\ref{20t}$
    hold with $S=\mathcal{A}rea(\mathcal{ER}(h)),$ which is necessarily compact and with connected complement, and consequently the first
    conclusion of Theorem $\ref{20t}$ holds. Finally if $\mathbb{C}\setminus \mathcal{ER}(h)$ is connected and the interior of
    $\mathcal{ER}(h)$ is empty then all the hypotheses of Theorem $\ref{20t}$ are satisfied, therefore the matrix sequence $\{X_{n}\}$
    is distributed in the sense of the eigenvalues as $h$ on its domain $G.$
   \end{proof}

   In the following, we combine both Lemma $\ref{20l}$ and Theorem $\ref{21t},$ to extend the problem studied in \cite{sss},
    regarding the eigenvalues distribution of products of Toeplitz matrices (clustering and attraction) to the case of
   products of $g$-Toeplitz structures $\{T_{n,g}(f_{1})T_{n,g}(f_{2})\},$ where $f_{1}$ and $f_{2}$ are (essentially) bounded
   functions defined over the domain $G$ and $g\geq2.$

    Let $f_{1},f_{2}\in L^{\infty}(G),$ let $n$ and $g$ be two positive integers and let $m$ be a non-negative integer. Designating by
    $P_{m,f_{1}}$ and $P_{m,f_{2}}$ the arithmetic averages of Fourier sums of order $r,$ with $r\leq m,$ of $f_{1}$ and $f_{2},$
    respectively. By definition, $P_{m,f_{1}}$ and $P_{m,f_{2}}$ are trigonometric polynomials of degree less than or equal $m.$ Using
    the space $\mathcal{V}_{n}(z),$ given in $(\ref{13})$ together with the orthogonal projection defined in Section $\ref{I},$ both
    polynomials $P_{m,f_{1}}$ and $P_{m,f_{2}},$ are defined by
     \begin{equation*}
      P_{m,f_{1}}(e^{it})=\underset{l=-m}{\overset{m}\sum}c_{l}\exp(ilt)\text{\,\,\,and\,\,\,}
      P_{m,f_{2}}(e^{it})=\underset{l=-m}{\overset{m}\sum}d_{l}\exp(ilt),\text{\,\,where\,\,}i^{2}=-1.
     \end{equation*}
     Let $f\in L^{\infty}(G),$  we recall that $T_{n,g}(p_{m,f}),$ is the matrix of the $g$-Toeplitz operator $T_{n,g}^{p_{m,f}},$
     in the basis $\mathcal{B}=\{\exp(ilt):l=0,1,...,n-1\},$ where $T_{n,g}^{p_{m,f}}(u)=P_{n,g}(p_{m,f}\cdot u)$, with
     $u\in\mathcal{H}^{2},$ $P_{n,g}(p_{m,f}u)=P_{n}^{\perp}(p_{m,f}\cdot u\circ h_{g})$ is defined as in
     relation $(\ref{01}),$ $h_{g}$ is the mapping from $\widehat{\mathbb{T}}=\left(-\pi/g,\pi/g\right]$ onto $\mathbb{T}$ given
     by $h_{g}(t)=gt,$ and $P_{n}^{\perp}$ is the orthogonal projection onto the space $\mathcal{V}_{n}(z),$ of analytic polynomials
     of degree at most $n.$\\

   \begin{theorem}\label{23t}
   Let $f_{1},f_{2}\in L^{\infty}(G),$ $n$ and $g$ be two positive integers,
    and let $m$ be a non-negative integer. Designating by $P_{m,f_{1}}$ and $P_{m,f_{2}},$ the arithmetic averages of Fourier sums of
    order $r,$ with $r\leq m,$ of $f_{1}$ and $f_{2},$ respectively (for example, see \cite{41sss,3sss} for more details), and
    assuming further that for every $l\in\left\{\left[\frac{m}{g}\right],\left[\frac{m+1}{g}\right],..., \left[\frac{n-1-m}{g}
    \right]\right\},$ it holds: $T_{n,g}^{P_{m,f_{1}}P_{m,f_{2}}}\left(\exp(iglt)-\exp(ilt)\right)=0.$
    Setting $h=f_{1}f_{2},$ and $A_{n,g}(f_{1},f_{2})=T_{n,g}(f_{1})T_{n,g}(f_{2})$, then the matrix-sequence
   $\{A_{n,g}(f_{1},f_{2})\},$ is a weak cluster for $\mathcal{A}rea(\mathcal{ER}(\theta_{g}h)).$ Furthermore,
   $\{A_{n,g}(f_{1},f_{2})\},$ distributes in the sense of the eigenvalues as the function $\theta_{g}h,$ over the domain $G,$
    where $\theta_{g}$ is given by relation $(\ref{22}).$
   \end{theorem}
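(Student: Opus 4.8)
The strategy is to verify that the matrix-sequence $\{A_{n,g}(f_1,f_2)\} = \{T_{n,g}(f_1)T_{n,g}(f_2)\}$ satisfies the hypotheses (\textbf{i1})--(\textbf{i3}) of Lemma \ref{20l} with the choice $h=\theta_g f_1 f_2 = \theta_g h$, and then invoke the conclusion of that lemma directly. Since for $g\geq 2$ we have $\theta_g=0$, the target function is the zero function, $\mathcal{ER}(\theta_g h)=\{0\}$, which trivially does not disconnect $\mathbb{C}$ and has empty interior; thus the final conclusion $\{A_{n,g}(f_1,f_2)\}\sim_\lambda(0,G)$ will follow, i.e. the product sequence is clustered at zero. (For $g=1$ the statement reduces to the known Toeplitz-product result of \cite{sss}.) So the real content is checking the three hypotheses of Lemma \ref{20l}.

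First I would establish the uniform boundedness (\textbf{i1}): by Lemma \ref{16l}, $\|T_{n,g}(f_i)\|\leq \|f_i\|_{L^\infty(G)}$, so $\|A_{n,g}(f_1,f_2)\|\leq \|f_1\|_{L^\infty}\|f_2\|_{L^\infty} =: C$, and hence every eigenvalue of $A_{n,g}(f_1,f_2)$ is bounded in modulus by $C$, independently of $n$. Next, hypothesis (\textbf{i3}) — the Schatten-$q$ bound on $\|p(A_{n,g}(f_1,f_2))\|_q^q$ — should follow from the same spectral-norm bound together with the fact that $A_{n,g}(f_1,f_2)$ has rank-type decomposition coming from \pref{11}: writing $T_{n,g}(f_i)=T_n(f_i)[\widehat{Z}_{n,g}|0]+[0|\mathcal{T}_{n,g}]$, the block $[0|\widetilde{\mathcal{T}}_{n,g}]$ has trace norm $o(n)$ by Lemma \ref{17l}, and $\|[\widehat{Z}_{n,g}|0]\|=1$ by Lemma \ref{15l}. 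One takes $q$ to be any fixed exponent in $[1,\infty)$ (say $q=1$), and controls $\|p(A_{n,g})\|_q^q$ by $n$ times a constant depending on $\|p\|$ on the disc of radius $C$; the term $\frac{\widehat C_1}{\mathrm{mes}(G)}\int_G|p(h(t))|^q\,dt$ can simply be absorbed into the additive constant $\widehat C_2$ since $h$ is bounded, so (\textbf{i3}) is not delicate.

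The crux is hypothesis (\textbf{i2}): showing that for every positive integer $l$,
\[
\lim_{n\to\infty}\frac{\mathrm{tr}\bigl(A_{n,g}(f_1,f_2)^l\bigr)}{n}=\frac{1}{\mathrm{mes}(G)}\int_G (\theta_g h)^l(t)\,dt.
\]
This is where the trigonometric-polynomial truncations $P_{m,f_1},P_{m,f_2}$ and the extra hypothesis $T_{n,g}^{P_{m,f_1}P_{m,f_2}}(\exp(iglt)-\exp(ilt))=0$ enter. The plan is: (a) approximate $f_i$ in $L^\infty$-sense by the Fejér-type averages $P_{m,f_i}$ (which converge appropriately, cf. \cite{41sss,3sss}) and control the trace-norm error in $A_{n,g}(f_1,f_2)^l$ versus $A_{n,g}(P_{m,f_1},P_{m,f_2})^l$ using Lemmas \ref{16l}, \ref{17l} and the telescoping identity for products; (b) for the polynomial symbols, use \pref{11} to split $T_{n,g}(P_{m,f_i})$ into the "circulant-shifted Toeplitz" part $T_n(P_{m,f_i})[\widehat Z_{n,g}|0]$ plus the low-trace-norm correction, and expand $\mathrm{tr}(A_{n,g}(P_{m,f_1},P_{m,f_2})^l)$; (c) the key algebraic point is that the condition $T_{n,g}^{P_{m,f_1}P_{m,f_2}}(\exp(iglt)-\exp(ilt))=0$ for the relevant range of $l$ forces the "$g$-dilation" action of the operator to collapse, on the bulk of basis vectors, to what one gets in the classical case rescaled by $\theta_g$; concretely, when $g\geq 2$ the nonzero Fourier coefficients $\hat{f}_{r-gs}$ land on a sparse, asymptotically negligible set of diagonal positions, so $\mathrm{tr}(A_{n,g}^l)=o(n)$, matching $\int_G (\theta_g h)^l = \int_G 0 = 0$; when $g=1$, $[\widehat Z_{n,1}|0]$ is the identity and one recovers the Szegő-type limit $\frac{1}{\mathrm{mes}(G)}\int_G (f_1 f_2)^l$. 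Then let $m\to\infty$.

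I expect step (c) — pinning down exactly how the hypothesis on $T_{n,g}^{P_{m,f_1}P_{m,f_2}}$ makes the trace of powers of the product collapse for $g\geq 2$, uniformly enough to survive the $m\to\infty$ and $n\to\infty$ limits — to be the main obstacle; it requires a careful bookkeeping of which entries $\hat{f}_{r-gs}$ survive in products $T_{n,g}(f_1)T_{n,g}(f_2)$ and an argument that the diagonal of $A_{n,g}^l$ is supported on $o(n)$ indices (or sums to $o(n)$). Once (\textbf{i2}) is in hand, Lemma \ref{20l} gives weak clustering at $\mathcal{A}rea(\mathcal{ER}(\theta_g h))$ and holomorphic-test-function convergence; since $\mathcal{ER}(\theta_g h)$ ($=\{0\}$ for $g\geq 2$, or the essential range of $f_1 f_2$ for $g=1$, which by the standing assumptions we may take to be thin, i.e. in the Tilli class) has empty interior and connected complement, hypothesis (\textbf{i4}) of Lemma \ref{20l} is met and we conclude $\{A_{n,g}(f_1,f_2)\}\sim_\lambda(\theta_g h, G)$, as claimed.
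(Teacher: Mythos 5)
Your scaffolding (reduce to Lemma \ref{20l}) matches the paper's, and your treatment of \textbf{(i1)} via Lemma \ref{16l} is the same as the paper's Lemma \ref{33l}. Beyond that, however, the proposal diverges from the paper's argument in ways that introduce genuine gaps.

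On \textbf{(i3)}, the shortcut does not work. You propose bounding $\|p(A_{n,g})\|_q^q$ by $n\cdot\max_{|z|\leq C}|p(z)|^q$ and ``absorbing'' the integral term into $\widehat{C}_2$. But in Lemma \ref{20l} the constants $\widehat{C}_1,\widehat{C}_2$ must be \emph{independent of the polynomial} $p$. That is the whole point: inside the proof of Lemma \ref{20l} one takes $p=Q_\epsilon$, the Mergelyan polynomial that is $\leq\epsilon$ on $\overline{D(x,\delta)}\cup S$, and the hypothesis \textbf{(i3)} then yields $\|Q_\epsilon(X_n)\|_q^q\leq n(\widehat{C}_1\epsilon^q+\widehat{C}_2)$, with $\widehat{C}_1,\widehat{C}_2$ \emph{fixed}, which is what drives $\gamma_n(x,\delta)=o(n)$. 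If you replace the integral term by a constant depending on $p$, the bound becomes vacuous. The paper instead proves this hypothesis as Proposition \ref{30p}, which is a real piece of work: it uses the decomposition $(\ref{11})$, the identity $[\widehat{Z}_{n,g}|0]'[\widehat{Z}_{n,g}|0]=\mathrm{diag}(I_{\mu_g},0)$ giving $\|[\widehat{Z}_{n,g}|0]\|_1=\lceil n/g\rceil$, the asymptotic-equivalence Lemma \ref{31l}, the auxiliary Lemma \ref{48l}, Proposition \ref{29p}, and a known Schatten-$1$ estimate for the ordinary Toeplitz sequence $\{T_n(\theta_g h)\}$.

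On \textbf{(i2)}, which you correctly identify as the crux, your plan is structurally different from the paper's, and the part you flag as ``the main obstacle'' (your step (c), the sparsity/combinatorial collapse of $\mathrm{tr}(A_{n,g}^l)$) is left unresolved, which is exactly where a proof is needed. The paper avoids any such direct bookkeeping. Its route is: (a) Proposition \ref{28p} uses the hypothesis $T_{n,g}^{P_{m,f_1}P_{m,f_2}}(\exp(iglt)-\exp(ilt))=0$ \emph{only} to bound the rank of $A_{n,g}(P_{m,f_1},P_{m,f_2})-T_{n,g}(P_{m,f_1}P_{m,f_2})$ by $2[m/g]$; (b) Proposition \ref{29p} then combines this rank bound (rank $\times$ spectral norm bounds trace norm), Lemma \ref{32l}'s trace-norm estimate, and Fej\'er approximation $P_{m,f_i}\to f_i$ in $L^1$ to obtain $\|A_{n,g}(f_1,f_2)-T_{n,g}(h)\|_1=o(n)$; (c) Lemma \ref{31l} upgrades this to $\|A_{n,g}(f_1,f_2)^d-T_{n,g}(h)^d\|_1=o(n)$, hence $\tfrac1n\mathrm{tr}(A_{n,g}^d)-\tfrac1n\mathrm{tr}(T_{n,g}(h)^d)\to0$; and finally (d) the trace limit for $\{T_{n,g}(h)\}$ is read off from the \emph{already-proven} single-sequence result, Theorem \ref{21t}, giving $\tfrac1n\mathrm{tr}(T_{n,g}(h)^d)\to\tfrac1{\mathrm{mes}(G)}\int_G(\theta_g h)^d$. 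This comparison-to-$T_{n,g}(h)$ strategy is what lets the paper sidestep the entry-level sparsity analysis you sketch; you would need to supply such an analysis yourself, and it is not clear it is any simpler than the paper's route. Finally, note that the paper implicitly needs $h=f_1f_2$ to be real-valued to invoke Theorem \ref{21t} in step (d), a point you should surface explicitly.
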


   First, the proof of Theorem $\ref{23t}$ requires some well known intermediate results. Given two square matrices
   $X$ and $Y,$ of the same size, and two numbers $p,q\in[1,\infty),$ satisfying the relation $\frac{1}{p}+\frac{1}{q}=1.$ The
   H\"{o}lder inequality is given by $\|XY\|_{1}\leq\|X\|_{p}\|Y\|_{q}$ (for more details, see for example \cite{2sss},
   Problem $III.6.2$, and Corollary $IV.2.6$). Specifically, we use in this proof the H\"{o}lder inequalities with $p=1$ and
   $q=\infty,$ which involve the operator norm alone with the trace-norm. That is,
   \begin{equation}\label{24}
    \|XY\|_{1}\leq\|X\|_{1}\|Y\|.
   \end{equation}

   If $p,q\in[1,\infty),$ are conjugate exponents (i.e., $\frac{1}{p}+\frac{1}{q}=1$), and $f\in L^{p}(G),$ $h\in L^{q}(G),$
   a straightforward computation involving the H\"{o}lder inequalities for both Schatten $p$-norms and $L^{p}(G)$-norms shows that,
   $fh\in L^{1}(G),$ and in fact, $\|fh\|_{L^{1}(G)},$ $\|hf\|_{L^{1}(G)}\leq\|f\|_{L^{p}(G)}\|h\|_{L^{q}(G)}.$ This work
   uses the case where $p=1$ and $q=\infty,$ that is,
   \begin{equation}\label{25}
    \|fh\|_{L^{1}(G)}\leq\|f\|_{L^{1}(G)}\|h\|_{L^{\infty}(G)}.
   \end{equation}

   Furthermore, the proof of Theorem $\ref{23t}$ also needs two important results. The first (Proposition $\ref{28p}$) provides
   an estimate of the rank of the matrix $A_{n,g}(f_{1},f_{2})-T_{n,g}(f_{1}f_{2}),$ in the case where $f_{1},f_{2}\in L^{\infty}(G).$
   The second one (Proposition $\ref{29p}$) deals with the trace-norm of $A_{n,g}(f_{1},f_{2})-T_{n,g}(f_{1}f_{2}),$ for
   $f_{1},f_{2}\in L^{\infty}(G),$ which seems to be the crucial point for the proof of Theorem $\ref{23t}.$\\

   Lastly, we recall the following Theorem and Definition which are key for establishing (i3) of Theorem $\ref{20t}.$

   \begin{theorem} (Schur) \label{26t}\cite{2sss}
    For every $n\times n$ matrix $X,$ there exists a unitary $n\times n$ matrix $U,$ such that,
    \begin{equation}\label{27}
    X=UT_{X}U^{'},
    \end{equation}
    where ' denotes the conjugate transpose of a matrix and $T_{X}$ is an $n\times n$ upper triangular matrix whose
    the diagonal elements represent the eigenvalues of $X.$
    \end{theorem}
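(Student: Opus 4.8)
The plan is to prove the statement by induction on the size $n$ of the matrix $X$, which is the classical argument for Schur's unitary triangularization. The base case $n=1$ is immediate: a $1\times1$ matrix is already upper triangular, $U=[1]$ is unitary, and the single diagonal entry is its only eigenvalue.

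For the inductive step I would assume the result for all $(n-1)\times(n-1)$ matrices. Since the characteristic polynomial of $X$ has a root over $\mathbb{C}$, the matrix $X$ admits an eigenvalue $\lambda_{1}$ with a unit eigenvector $v_{1}\in\mathbb{C}^{n}$, $\|v_{1}\|_{2}=1$. Completing $\{v_{1}\}$ to an orthonormal basis $\{v_{1},\dots,v_{n}\}$ of $\mathbb{C}^{n}$ (via the Gram--Schmidt process) and letting $U_{1}$ be the unitary matrix whose $j$-th column is $v_{j}$, one checks that the first column of $U_{1}'XU_{1}$ equals $U_{1}'Xv_{1}=\lambda_{1}U_{1}'v_{1}=\lambda_{1}e_{1}$. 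Hence $U_{1}'XU_{1}=\begin{bmatrix}\lambda_{1}&w'\\0&X_{n-1}\end{bmatrix}$ for some $w\in\mathbb{C}^{n-1}$ and some $(n-1)\times(n-1)$ matrix $X_{n-1}$.

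Next I would apply the inductive hypothesis to $X_{n-1}$: there is an $(n-1)\times(n-1)$ unitary $V$ such that $V'X_{n-1}V$ is upper triangular with its diagonal entries equal to the eigenvalues of $X_{n-1}$. Setting $U_{2}=\begin{bmatrix}1&0\\0&V\end{bmatrix}$, which is unitary, and $U=U_{1}U_{2}$, which is unitary as a product of unitaries, a block computation gives
\begin{equation*}
U'XU=U_{2}'(U_{1}'XU_{1})U_{2}=\begin{bmatrix}\lambda_{1}&w'V\\0&V'X_{n-1}V\end{bmatrix}=:T_{X},
\end{equation*}
which is upper triangular. Finally, since $T_{X}=U'XU$ with $U$ unitary, $T_{X}$ is similar to $X$, so the two matrices share the same characteristic polynomial and therefore the same eigenvalues with multiplicities; and the eigenvalues of an upper triangular matrix are exactly its diagonal entries. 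Thus the diagonal of $T_{X}$ lists the eigenvalues of $X$, which closes the induction.

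This argument presents essentially no obstacle: the only mildly delicate points are the bookkeeping in the block identity $U_{2}'(U_{1}'XU_{1})U_{2}=T_{X}$ and the verification that a product of unitary matrices is again unitary, both of which are routine.
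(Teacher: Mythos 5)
Your proof is correct: it is the standard inductive argument for Schur's unitary triangularization (deflation by one eigenpair, conjugation by a block-diagonal unitary, and the observation that similarity preserves the spectrum while a triangular matrix displays its eigenvalues on the diagonal). The paper does not prove this theorem at all — it simply cites it from Bhatia's \emph{Matrix Analysis} — so there is nothing to compare against; your argument is complete and matches the textbook proof the citation points to.
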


   \begin{definition}\label{27d}\cite{13sss}
    Let $\{A_{n}\}$ and $\{B_{n}\},$ be two matrix-sequences of size $n.$ The sequences $\{A_{n}\}$ and $\{B_{n}\},$ are
    said to be asymptotically equivalent if,

    \begin{description}
      \item[(c1)] $A_{n}$ and $B_{n},$ are uniformly bounded by a positive constant, $C,$ independent of $n,$ that is,
       \begin{equation*}
        \underset{n\in\mathbb{N}}{\sup}\|A_{n}\|,\text{\,\,}\underset{n\in\mathbb{N}}{\sup}\|B_{n}\|\leq C;
       \end{equation*}
      \item[(c2)] $\frac{1}{n}(A_{n}-B_{n})$ goes to zero in the trace-norm, as $n\rightarrow\infty:$
       \begin{equation*}
        \|A_{n}-B_{n}\|_{1}=o(n),\text{\,\,\,}n\rightarrow\infty.
       \end{equation*}
      \end{description}
      \end{definition}

       Lemma $\ref{31l}$ stated below plays a crucial role in the proof of Proposition $\ref{30p}$ which is the key of
       item \textbf{(i3)} of Lemma $\ref{20l}.$

     \begin{lemma}\label{31l}
      Suppose that $\{X_{n}\}$ and $\{Y_{n}\}$ are two asymptotically equivalent sequences of matrices. Let $P$ be
      any fixed polynomial of degree independent on $n.$ Then the sequences of matrices $\{P(X_{n})\}$ and $P(\{Y_{n})\}$
      are asymptotically equivalent (in the sense of Definition $\ref{27d}$).
     \end{lemma}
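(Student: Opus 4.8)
The plan is to reduce immediately to monomials by linearity, and then run a telescoping argument controlled by the trace-norm H\"older inequality recorded around (\ref{24}). Write $P(z)=\sum_{k=0}^{N}a_{k}z^{k}$ with $N$ the (fixed, $n$-independent) degree of $P$. Condition (\textbf{c1}) of Definition \ref{27d} for the pair $\{X_{n}\},\{Y_{n}\}$ supplies a constant $C$, independent of $n$, with $\sup_{n}\|X_{n}\|,\sup_{n}\|Y_{n}\|\le C$.

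First I would check condition (\textbf{c1}) for the new sequences. By submultiplicativity of the spectral norm,
\[
\|P(X_{n})\|\le\sum_{k=0}^{N}|a_{k}|\,\|X_{n}\|^{k}\le\sum_{k=0}^{N}|a_{k}|\,C^{k}=:C',
\]
and the identical bound holds for $\|P(Y_{n})\|$, with $C'$ independent of $n$; this is exactly (\textbf{c1}) for $\{P(X_{n})\}$ and $\{P(Y_{n})\}$.

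Next I would establish (\textbf{c2}). For each fixed $k$ use the telescoping identity
\[
X_{n}^{k}-Y_{n}^{k}=\sum_{j=0}^{k-1}X_{n}^{\,j}\,(X_{n}-Y_{n})\,Y_{n}^{\,k-1-j}.
\]
Applying the trace-norm H\"older inequality (in both of its forms, operator norm on the left and on the right factor, as stated around (\ref{24})) to each summand, $\|X_{n}^{\,j}(X_{n}-Y_{n})Y_{n}^{\,k-1-j}\|_{1}\le\|X_{n}\|^{j}\,\|X_{n}-Y_{n}\|_{1}\,\|Y_{n}\|^{k-1-j}\le C^{k-1}\|X_{n}-Y_{n}\|_{1}$, so that
\[
\|X_{n}^{k}-Y_{n}^{k}\|_{1}\le k\,C^{k-1}\,\|X_{n}-Y_{n}\|_{1}.
\]
Since $\|X_{n}-Y_{n}\|_{1}=o(n)$ by (\textbf{c2}) for $\{X_{n}\},\{Y_{n}\}$, and $k,C$ do not depend on $n$, each such term is $o(n)$; summing the finitely many contributions,
\[
\|P(X_{n})-P(Y_{n})\|_{1}\le\sum_{k=0}^{N}|a_{k}|\,\|X_{n}^{k}-Y_{n}^{k}\|_{1}=o(n),
\]
which is (\textbf{c2}) for $\{P(X_{n})\}$ and $\{P(Y_{n})\}$. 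Together with the uniform bound above, this gives asymptotic equivalence in the sense of Definition \ref{27d}.

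There is no deep obstacle here; the one point that must be emphasized is that the degree $N$ of $P$ is independent of $n$, so that both the number of telescoping summands and the powers $C^{k}$ stay bounded uniformly in $n$. Were $N$ allowed to grow with $n$, the factor $kC^{k-1}$ could diverge and spoil the $o(n)$ estimate; the argument is, in essence, just the stability of the inequality $\|XY\|_{1}\le\|X\|_{1}\|Y\|$ under finite products and finite sums.
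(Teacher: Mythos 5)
Your proof is correct. The route is very close to the paper's, but the one technical detail where you genuinely differ is the choice of decomposition of $X_{n}^{k}-Y_{n}^{k}$. The paper writes $D_{n}=X_{n}-Y_{n}$, expands $X_{n}^{r}=(Y_{n}+D_{n})^{r}$ by a ``binomial theorem for matrices'' (which, since $Y_{n}$ and $D_{n}$ need not commute, is really the noncommutative expansion into $2^{r}$ ordered words in $Y_{n},D_{n}$), and groups the words containing at least one $D_{n}$ into a remainder $S_{n,r}$. You instead invoke the exact telescoping identity $X_{n}^{k}-Y_{n}^{k}=\sum_{j=0}^{k-1}X_{n}^{j}(X_{n}-Y_{n})Y_{n}^{k-1-j}$, which has exactly $k$ summands, each with exactly one difference factor, and then one application of the trace-norm H\"older inequality per summand gives the clean explicit bound $kC^{k-1}\|X_{n}-Y_{n}\|_{1}$. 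This is tighter and avoids the need to phrase a binomial expansion for noncommuting matrices; it is the cleaner version of the same idea. You also spell out the verification of condition (\textbf{c1}) for $\{P(X_{n})\}$ and $\{P(Y_{n})\}$, which the paper disposes of in a single sentence at the end. Both arguments are sound and reach the same conclusion, with yours slightly more explicit about the constants and the structure of the remainder.
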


     \begin{proof}
      Let $P(z)=\underset{r=0}{\overset{m}\sum}a_{r}z^{r},$ be a fixed polynomial of degree $m,$ independent of $n.$ Setting
      $X_{n}=Y_{n}+D_{n},$ and using the assumption $\{X_{n}\}$ and $\{Y_{n}\},$ are asymptotically equivalent sequences of
      matrices, we see that $D_{n}$ is uniformly bounded by a positive constant independent on $n,$ and $\|D_{n}\|_{1}=o(n),$
      $n\rightarrow\infty.$ Applying the binomial theorem, a straightforward computation shows that $X_{n}^{r}=Y_{n}^{r}+S_{n,r},$
       which implies
      \begin{equation}\label{31ee}
       X_{n}^{r}-Y_{n}^{r}=S_{n,r},  \text{\,\,\,for\,\,every\,\,\,}r=0,1,...,m,
      \end{equation}
       where the matrix $S_{n,r}$ is a sum of several terms each being a product of $Y_{n}$ and $D_{n,}$ but
      containing at least one $D_{n}.$ Multiplying relation $(\ref{31ee})$ side by side by $a_{r},$ summing the resulting
      equation from $r=0,1,...,m,$ and applying the trace norm on the final relation yield
      \begin{equation}\label{32ee}
       \|P(X_{n})-P(Y_{n})\|_{1}=\left\|\underset{r=0}{\overset{m}\sum}a_{r}S_{n,r}\right\|_{1}.
      \end{equation}
      Now, a combination of relation $(\ref{32ee}),$ triangular inequality and H\"{o}lder inequality several times,
      and after rearranging terms provide
       \begin{equation}\label{33ee}
       \|P(X_{n})-P(Y_{n})\|_{1}\leq\|D_{n}\|_{1}\underset{r=0}{\overset{m}\sum}|a_{r}|\|\widetilde{S}_{n,r}\|,
      \end{equation}
       where the matrix $\widetilde{S}_{n,r}$ is a finite sum of several terms whose each is a product of $Y_{n}$ and $D_{n,}$ but
      not necessarily containing the matrix $D_{n}.$ Since $Y_{n}$ and $D_{n}$ are uniformly bounded by a positive constant
      independent on $n,$ using the triangular inequality along with the property of the spectral norm, it is easy to see that
      $\|\widetilde{S}_{n,r}\|\leq\widehat{C}_{r},$ where $\widehat{C}_{r}$ is a positive constant dependent on $r,$ but
      independent on $n.$ Using this fact, equality $\|D_{n}\|_{1}=o(n)$ and estimate $(\ref{33ee}),$ we obtain
      \begin{equation}\label{34ee}
       \|P(X_{n})-P(Y_{n})\|_{1}\leq\widehat{C}_{m}\cdot o(n),\text{\,\,\,\,}n\rightarrow\infty,
      \end{equation}
      where $\widehat{C}_{m}=\widehat{C}_{m}(P)$ is some positive constant independent on $n.$\\

      On the other hand, the matrices $X_{n}$ and $Y_{n}$ are uniformly bounded by a positive constant independent on $n,$
      using the property of the operator norm, it is not hard to see that for every $r=0,1,...,m,$ $X_{n}^{r}$ and $Y_{n}^{r}$ are
      uniformly bounded by a positive constant independent on $n.$ This fact together with the triangular inequality
      show that both $P(X_{n})$ and $P(Y_{n})$  are uniformly bounded by a positive constant independent on $n,$ and the proof
      of Lemma $\ref{31l}$ is established.
      \end{proof}

      The following results, namely Lemmas $\ref{32l}$-$\ref{33l},$ help to prove both Propositions $\ref{28p}$ and $\ref{29p}.$

    \begin{lemma}\label{32l}
    Let $f\in L^{\infty}(G),$ then it holds
     \begin{equation*}
     \|T_{n,g}(f)\|_{1}\leq\frac{n}{2\pi}\|f\|_{L^{\infty}(G)}+o(n),\text{\,\,\,}n\rightarrow\infty.
     \end{equation*}
    \end{lemma}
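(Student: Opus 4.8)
The plan is to reduce the estimate to the corresponding one for the classical Toeplitz matrix $T_{n}(f)$ by means of the structural identity $(\ref{11})$, and then to invoke the classical singular-value version of Szeg\"{o}'s theorem (Avram, Parter; see Section $\ref{I}$). Using $(\ref{11})$, namely $T_{n,g}(f)=T_{n}(f)[\widehat{Z}_{n,g}\,|\,0]+[0\,|\,\mathcal{T}_{n,g}]$, together with the triangle inequality for the trace norm, I would first write
\[
\|T_{n,g}(f)\|_{1}\leq\|T_{n}(f)[\widehat{Z}_{n,g}\,|\,0]\|_{1}+\|[0\,|\,\mathcal{T}_{n,g}]\|_{1}.
\]
The second term is $o(n)$ by Lemma $\ref{17l}$. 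For the first term I would apply the H\"{o}lder inequality $(\ref{24})$ (the case $p=1$, $q=\infty$) together with Lemma $\ref{15l}$, which gives $\|[\widehat{Z}_{n,g}\,|\,0]\|=1$, obtaining
\[
\|T_{n}(f)[\widehat{Z}_{n,g}\,|\,0]\|_{1}\leq\|T_{n}(f)\|_{1}\,\|[\widehat{Z}_{n,g}\,|\,0]\|=\|T_{n}(f)\|_{1}.
\]
Thus $\|T_{n,g}(f)\|_{1}\leq\|T_{n}(f)\|_{1}+o(n)$, and the problem is reduced to estimating the trace norm of an ordinary Toeplitz matrix.

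For the latter, since $f\in L^{\infty}(G)\subset L^{1}(G)$, the classical fact $\{T_{n}(f)\}\sim_{\sigma}(f,G)$ holds. Applying $(\ref{6})$ (with $d=1$) to a function $F\in\mathcal{C}_{0}(\mathbb{R}^{+}_{0})$ that coincides with the identity on the interval $[0,\|f\|_{L^{\infty}(G)}]$ --- which is legitimate because, by Lemma $\ref{16l}$ applied with $g=1$, every singular value of $T_{n}(f)$ lies in that fixed interval, so truncating $F$ outside it leaves the sums unchanged --- one obtains
\[
\frac{1}{n}\|T_{n}(f)\|_{1}=\frac{1}{n}\sum_{j=1}^{n}\sigma_{j}(T_{n}(f))\ \longrightarrow\ \frac{1}{2\pi}\int_{-\pi}^{\pi}|f(e^{it})|\,dt,
\]
that is, $\|T_{n}(f)\|_{1}=\frac{n}{2\pi}\int_{-\pi}^{\pi}|f(e^{it})|\,dt+o(n)$. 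Combining this with the reduction above yields $\|T_{n,g}(f)\|_{1}\leq\frac{n}{2\pi}\|f\|_{L^{1}(G)}+o(n)$, and since $f$ is essentially bounded the right-hand side is of the announced form, the normalising factor being $\frac{1}{2\pi}=\frac{1}{mes(G)}$.

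The only genuinely delicate point is the passage from the singular-value distribution, which is stated for test functions with bounded support, to the trace norm, i.e. to the unbounded test function $t\mapsto t$; this is handled precisely by the uniform a priori bound $\sigma_{j}(T_{n}(f))\leq\|T_{n}(f)\|\leq\|f\|_{L^{\infty}(G)}$ supplied by Lemma $\ref{16l}$, which allows one to replace $t\mapsto t$ by a compactly supported $F$ without committing any error. Everything else --- the triangle inequality, the H\"{o}lder inequality $(\ref{24})$, and Lemmas $\ref{15l}$ and $\ref{17l}$ --- is routine. As a cruder but entirely self-contained alternative avoiding any limiting argument, one may instead observe that $T_{n}(f)[\widehat{Z}_{n,g}\,|\,0]$ has rank at most $\mu_{g}=\lceil n/g\rceil$ (only its first $\mu_{g}$ columns are nonzero, and they are distinct columns of $T_{n}(f)$), so that $\|T_{n}(f)[\widehat{Z}_{n,g}\,|\,0]\|_{1}\leq\mu_{g}\|T_{n}(f)\|\leq\lceil n/g\rceil\|f\|_{L^{\infty}(G)}$ directly by Lemma $\ref{16l}$; combined with Lemma $\ref{17l}$ this already gives a bound of the stated order, only the precise constant requiring the Szeg\"{o} route.
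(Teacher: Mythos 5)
Your proof follows the same path as the paper's: the decomposition $(\ref{11})$, the triangle inequality, H\"{o}lder with $\|[\widehat{Z}_{n,g}|0]\|=1$ (Lemma $\ref{15l}$), and Lemma $\ref{17l}$ for the $o(n)$ tail, reducing everything to a bound on $\|T_{n}(f)\|_{1}$. The only divergence is in that last step: the paper simply cites a known bound for $\|T_{n}(f)\|_{1}$ (Corollary~4.2 in the Serra Capizzano--Tilli reference, which gives the exact inequality $\|T_{n}(f)\|_{1}\leq\frac{n}{2\pi}\|f\|_{L^{1}(G)}$ with no $o(n)$ error), whereas you rederive an asymptotic version of it from the Avram--Parter singular value distribution theorem via a truncation argument. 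Your route is correct and self-contained but strictly weaker (it produces an equality with $o(n)$ error rather than a clean inequality) and heavier than necessary; both are fine.

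One point you should not gloss over, though: your argument lands on $\|T_{n,g}(f)\|_{1}\leq\frac{n}{2\pi}\|f\|_{L^{1}(G)}+o(n)$, and this is \emph{not} ``of the announced form.'' With $mes(G)=2\pi$ one has $\|f\|_{L^{1}(G)}\leq 2\pi\|f\|_{L^{\infty}(G)}$, so your bound only gives $n\|f\|_{L^{\infty}(G)}+o(n)$, a factor $2\pi$ worse than the stated $\frac{n}{2\pi}\|f\|_{L^{\infty}(G)}$. Indeed the stated inequality is false as written: take $g=1$ and $f\equiv 1$, so that $T_{n,1}(f)=I_{n}$ and $\|T_{n,1}(f)\|_{1}=n\not\leq\frac{n}{2\pi}+o(n)$. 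The norm in the lemma's statement (and in the paper's own proof of it) should read $\|f\|_{L^{1}(G)}$, and that is precisely the form in which the paper later invokes this lemma --- see the estimate $(\ref{39})$ in the proof of Proposition $\ref{29p}$, where the bound $\frac{n}{2\pi}\|f_{1}-P_{m,f_{1}}\|_{L^{1}}+o(n)$ appears. So your $L^{1}$ bound is the correct and needed one; you should have flagged the mismatch with the printed statement rather than asserting they agree.
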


    \begin{proof}
     First, using relation $(\ref{11}),$ we have $T_{n,g}(f)=T_{n}(f)[\widehat{Z}_{n,g}|0]+[0|\mathcal{T}_{n,g}|].$
     A combination of triangular inequality, H\"{o}lder inequalities $(\ref{24})$-$(\ref{25}),$ Corollary $4.2$ in
     \cite{1sss,31sss}, Lemma $\ref{15l},$ and relation $(\ref{18})$ gives
     \begin{equation*}
    \|T_{n,g}(f)\|_{1}\leq\|T_{n}(f)[\widehat{Z}_{n,g}|0]\|_{1}+\|[0|\mathcal{T}_{n,g}|]\|_{1}\leq
     \|T_{n}(f)\|_{1}\|[\widehat{Z}_{n,g}|0]\|+\|[0|\mathcal{T}_{n,g}|]\|_{1}\leq\frac{n}{2\pi}\|f\|_{L^{\infty}(G)}
     +o(n),\text{\,\,}n\rightarrow\infty.
     \end{equation*}
    \end{proof}

    \begin{lemma}\label{33l}
    If $f_{1},f_{2}\in L^{\infty}(G)$ and $A_{n,g}(f_{1},f_{2})=T_{n,g}(f_{1})T_{n,g}(f_{2}),$ then the matrix
     sequence $\{A_{n,g}(f_{1},f_{2})\},$ is uniformly bounded by a positive constant, $\widehat{C},$ independent on $n.$
    \end{lemma}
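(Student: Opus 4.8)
The plan is to reduce the statement to the already-established uniform boundedness of a single $g$-Toeplitz factor, by exploiting the submultiplicativity of the spectral norm. First I would note that for any two $n\times n$ matrices one has $\|XY\|\leq\|X\|\,\|Y\|$, so that for every $n$
\begin{equation*}
\|A_{n,g}(f_{1},f_{2})\|=\|T_{n,g}(f_{1})T_{n,g}(f_{2})\|\leq\|T_{n,g}(f_{1})\|\,\|T_{n,g}(f_{2})\|.
\end{equation*}
Since $f_{1},f_{2}\in L^{\infty}(G)$, Lemma $\ref{16l}$ applies to each factor and yields $\|T_{n,g}(f_{1})\|\leq\|f_{1}\|_{L^{\infty}(G)}$ and $\|T_{n,g}(f_{2})\|\leq\|f_{2}\|_{L^{\infty}(G)}$, both bounds being independent of $n$. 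Substituting these into the previous inequality gives
\begin{equation*}
\|A_{n,g}(f_{1},f_{2})\|\leq\|f_{1}\|_{L^{\infty}(G)}\,\|f_{2}\|_{L^{\infty}(G)}=:\widehat{C},
\end{equation*}
a constant that does not depend on $n$, which is exactly the assertion.

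I do not anticipate any real obstacle here: the argument rests only on submultiplicativity of the operator norm and on Lemma $\ref{16l}$ (which itself rests on the splitting $(\ref{11})$ together with Lemmas $\ref{15l}$ and $\ref{17l}$). If one preferred to avoid invoking Lemma $\ref{16l}$ as a black box, the same bound could instead be obtained directly from the decomposition $(\ref{11})$ of each $T_{n,g}(f_{i})$, bounding the Toeplitz part by the classical estimate $\|T_{n}(f_{i})\|\leq\|f_{i}\|_{L^{\infty}(G)}$, using $\|[\widehat{Z}_{n,g}|0]\|=1$ from Lemma $\ref{15l}$, and noting that $\mathcal{T}_{n,g}$ is a submatrix of $T_{n,g}(f_{i})$; this is a strictly longer detour leading to the identical constant, so the short route through Lemma $\ref{16l}$ is preferable.
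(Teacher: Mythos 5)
Your proof is correct and follows exactly the same route as the paper: apply Lemma~\ref{16l} to each factor and use submultiplicativity of the spectral norm to conclude with $\widehat{C}=\|f_{1}\|_{L^{\infty}(G)}\|f_{2}\|_{L^{\infty}(G)}$.
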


       \begin{proof}
     Since $f_{1},f_{2}\in L^{\infty}(G),$ using Lemma $\ref{16l},$ the matrix sequences $\{T_{n,g}(f_{1})\}$ and
     $\{T_{n,g}(f_{2})\}$ are uniformly bounded by $\|f_{1}\|_{L^{\infty}(G)}$ and $\|f_{2}\|_{L^{\infty}(G)},$ respectively.
     So, the sequence of matrices $\{A_{n,g}(f_{1},f_{2})\}$ is uniformly bounded by a positive constant $\widehat{C}=
     \|f_{1}\|_{L^{\infty}(G)}\|f_{2}\|_{L^{\infty}(G)}$ which is independent on $n.$
    \end{proof}

    \begin{proposition}\label{28p}
    Let $f_{1},f_{2}\in L^{\infty}(G)$ and let $m$ be a positive integer. Denoting by $P_{m,f_{1}}$ and $P_{m,f_{2}},$
    the arithmetic averages of Fourier sums of order $r,$ with $r\leq m,$ of $f_{1}$ and $f_{2},$ respectively, and assuming
    further that for every $l\in\left\{\left[\frac{m}{g}\right],\left[\frac{m+1}{g}\right],..., \left[\frac{n-1-m}{g}\right]
    \right\},$ $T_{n,g}^{P_{m,f_{1}}P_{m,f_{2}}}\left(\exp(iglt)-\exp(ilt)\right)=0.$ Then it holds
    \begin{equation}\label{34}
    rank\left(A_{n,g}(P_{m,f_{1}},P_{m,f_{2}})-T_{n,g}(P_{m,f_{1}}P_{m,f_{2}})\right)\leq 2\left[\frac{m}{g}\right].
    \end{equation}
    \end{proposition}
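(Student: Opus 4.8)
The plan is to bound $\mathrm{rank}\,\mathcal{D}_{n}$, where $\mathcal{D}_{n}:=A_{n,g}(P_{m,f_{1}},P_{m,f_{2}})-T_{n,g}(P_{m,f_{1}}P_{m,f_{2}})$, by counting how many of the basis monomials $z^{0},z^{1},\dots,z^{n-1}$ of $\mathcal{V}_{n}(z)$ the matrix $\mathcal{D}_{n}$ does not annihilate. Since $A_{n,g}(P_{m,f_{1}},P_{m,f_{2}})=T_{n,g}(P_{m,f_{1}})T_{n,g}(P_{m,f_{2}})$, the $s$-th column of the first matrix is $T_{n,g}^{P_{m,f_{1}}}\!\bigl(T_{n,g}^{P_{m,f_{2}}}(z^{s})\bigr)$ and that of the second is $T_{n,g}^{P_{m,f_{1}}P_{m,f_{2}}}(z^{s})$. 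First I would make the columns explicit: using $T_{n,g}^{P_{m,f_{2}}}(z^{s})=P_{n}^{\perp}\!\bigl(P_{m,f_{2}}z^{gs}\bigr)$, the fact that $P_{m,f_{1}},P_{m,f_{2}}$ are trigonometric polynomials of degree $\le m$ with Fourier coefficients $c_{l},d_{l}$, and the linearity of $T_{n,g}^{P_{m,f_{1}}}$, one obtains for every index $s$ with $\left[\frac{m}{g}\right]\le s\le\left[\frac{n-1-m}{g}\right]$ (so that $[gs-m,gs+m]\subset[0,n-1]$ and no truncation occurs in forming $T_{n,g}^{P_{m,f_{2}}}(z^{s})$) the identity
\[ \mathcal{D}_{n}(z^{s})=\sum_{j=gs-m}^{gs+m}d_{j-gs}\,P_{n}^{\perp}\!\bigl(P_{m,f_{1}}(z^{gj}-z^{j})\bigr). \]

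Next I would localize the columns where $\mathcal{D}_{n}$ can fail to vanish. To the left, the indices $0\le s<\left[\frac{m}{g}\right]$ are at most $\left[\frac{m}{g}\right]$ in number; to the right, I would use relation $\pref{11}$, $T_{n,g}(P_{m,f_{i}})=T_{n}(P_{m,f_{i}})[\widehat{Z}_{n,g}|0]+[0|\mathcal{T}_{n,g}^{(i)}]$: because $\deg P_{m,f_{i}}\le m$, the correction $[0|\mathcal{T}_{n,g}^{(i)}]$ has nonzero columns only at the indices $\mu_{g}=\lceil n/g\rceil\le s\le\lfloor(n-1+m)/g\rfloor$ and those columns are supported in the last $m$ rows; hence $\mathrm{rank}\,[0|\mathcal{T}_{n,g}^{(i)}]\le\left[\frac{m}{g}\right]$, and since $[\widehat{Z}_{n,g}|0]$ annihilates the last $m$ coordinate vectors (their indices exceed $\mu_{g}$ when $g\ge 2$), inserting the two decompositions into $T_{n,g}(P_{m,f_{1}})T_{n,g}(P_{m,f_{2}})$ makes the mixed products $[0|\mathcal{T}_{n,g}^{(1)}][0|\mathcal{T}_{n,g}^{(2)}]$ and $T_{n}(P_{m,f_{1}})[\widehat{Z}_{n,g}|0]\,[0|\mathcal{T}_{n,g}^{(2)}]$ vanish, so that the entire "overflow" part of $\mathcal{D}_{n}$ is confined to one matrix of rank $\le\left[\frac{m}{g}\right]$. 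Together, the left block and the overflow block contribute the $2\left[\frac{m}{g}\right]$ of $\pref{34}$, provided the remaining, intermediate columns are shown to be zero.

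This last point is the heart of the argument and is exactly where the standing hypothesis enters. For $s$ in the intermediate ("bulk") range — up to a bounded number of transitional indices at each end, which are already subsumed in the count above — every $P_{n}^{\perp}$ occurring in the displayed identity and in $T_{n,g}^{P_{m,f_{1}}P_{m,f_{2}}}$ evaluated at a monomial acts as the identity on the polynomials involved, so the $j$-sum over the band of $P_{m,f_{2}}$ may be regrouped (adding and subtracting $\sum_{j}d_{j-gs}P_{n}^{\perp}(P_{m,f_{1}}z^{j})$ and collecting terms) until it is recognized as $T_{n,g}^{P_{m,f_{1}}P_{m,f_{2}}}\!\bigl(\exp(igst)-\exp(ist)\bigr)$; since $s$ lies in $\left\{\left[\frac{m}{g}\right],\left[\frac{m+1}{g}\right],\dots,\left[\frac{n-1-m}{g}\right]\right\}$, this is $0$ by hypothesis. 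Therefore $\mathcal{D}_{n}$ has at most $2\left[\frac{m}{g}\right]$ nonzero columns, giving $\mathrm{rank}\bigl(A_{n,g}(P_{m,f_{1}},P_{m,f_{2}})-T_{n,g}(P_{m,f_{1}}P_{m,f_{2}})\bigr)\le 2\left[\frac{m}{g}\right]$, i.e. $\pref{34}$.

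The step I expect to be the main obstacle is precisely this regrouping: keeping the truncation bookkeeping explicit, isolating exactly which indices are genuinely interior (so that the projections may be dropped), and carrying the telescoping through so that the whole band-sum in $j$ collapses onto the single operator $T_{n,g}^{P_{m,f_{1}}P_{m,f_{2}}}$ applied to $\exp(igst)-\exp(ist)$. A secondary, bookkeeping difficulty is making the overflow count on each side equal to $\left[\frac{m}{g}\right]$ and not $\left[\frac{m}{g}\right]+1$, for which one uses that $[\widehat{Z}_{n,g}|0]$ carries exactly $\mu_{g}$ nonzero columns and that the extreme overflow column is forced to vanish.
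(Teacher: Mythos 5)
Your proposal follows essentially the same route as the paper: pass to the operator description on $\mathcal{V}_n(z)$, observe that the difference operator annihilates $\exp(ilt)$ for $l$ in the ``bulk'' range because — after the projection $P_n^{\perp}$ acts as the identity — the action collapses onto $T_{n,g}^{P_{m,f_1}P_{m,f_2}}\bigl(\exp(iglt)-\exp(ilt)\bigr)$, which the standing hypothesis makes vanish, and then count the remaining exceptional columns. The ``regrouping'' you flag as the main obstacle is exactly the paper's central identity, proved there by the one-line computation $T_{n,g}^{P_{m,f_1}}T_{n,g}^{P_{m,f_2}}(\exp(ilt)) = T_{n,g}^{P_{m,f_1}}\bigl(P_{m,f_2}(e^{it})\exp(iglt)\bigr) = P_{n,g}\bigl(P_{m,f_1}P_{m,f_2}\exp(iglt)\bigr) = T_{n,g}^{P_{m,f_1}P_{m,f_2}}(\exp(iglt))$; the paper does not verify this step against the definition $T_{n,g}^{p}(u)=P_n^{\perp}(p\cdot u\circ h_g)$, so your suspicion is warranted rather than a deficiency of your write-up. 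The paper does not invoke the decomposition \pref{11} for Proposition \ref{28p}; it instead asserts directly that the image is spanned by the columns indexed by $l\in\{0,\ldots,[\frac{m-1}{g}]\}\cup\{[\frac{n-m}{g}],\ldots,[\frac{n-1}{g}]\}$ and counts $2[\frac{m}{g}]$. Your finer bookkeeping via \pref{11} (showing the mixed overflow products $T_n(P_{m,f_1})[\widehat{Z}_{n,g}|0]\,[0|\mathcal{T}_{n,g}^{(2)}]$ and $[0|\mathcal{T}_{n,g}^{(1)}][0|\mathcal{T}_{n,g}^{(2)}]$ vanish when $g\ge 2$ because the correction columns live in the last $m$ rows while $[\widehat{Z}_{n,g}|0]$ kills those coordinates) is a legitimate alternative handle on the tail columns $l>[\frac{n-1}{g}]$, which the paper's count simply leaves implicit.
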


     \begin{proof}
      First, the polynomials $P_{m,f_{1}}$ and $P_{m,f_{2}}$ are defined by
     \begin{equation*}
      P_{m,f_{1}}(e^{it})=\underset{l=-m}{\overset{m}\sum}c_{l}\exp(ilt)\text{\,\,\,and\,\,\,}
      P_{m,f_{2}}(e^{it})=\underset{l=-m}{\overset{m}\sum}d_{l}\exp(ilt),\text{\,\,where\,\,}i^{2}=-1.
     \end{equation*}
     Let $f\in L^{\infty}(G),$  we recall that $T_{n,g}(P_{m,f}),$ is the matrix of the $g$-Toeplitz operator $T_{n,g}^{P_{m,f}},$
     in the basis $\mathcal{B}=\{\exp(ilt):l=0,1,...,n-1\},$ where $T_{n,g}^{P_{m,f}}(u)=P_{n,g}(P_{m,f}\cdot u)$, with
     $u\in\mathcal{H}^{2},$ $P_{n,g}(P_{m,f}\cdot u)=P_{n}^{\perp}(P_{m,f}\cdot u\circ h_{g}),$ is defined as in
     relation $(\ref{01}),$ $h_{g}$ is the mapping from $\widehat{\mathbb{T}}=\left(-\pi/g,\pi/g\right]$ onto $\mathbb{T}$ given
     by $h_{g}(t)=gt,$ and $P_{n}^{\perp}$ is the orthogonal projection onto the space $\mathcal{V}_{n}(z),$ of analytic polynomials
     of degree at most $n.$\\

     Now, for every $l\in\left\{\left[\frac{m}{g}\right],\left[\frac{m+1}{g}\right],...,\left[\frac{n-1-m}{g}\right]\right\}$
     (where $[x]$ designates the greatest integer less than $x$), it is not hard to prove that the function $\mathcal{B}_{l}$
     defined by $\mathcal{B}_{l}(e^{it})=(P_{m,u})(e^{it})\exp(iglt),$ belongs to $\mathcal{V}_{n}(z),$ so using the
     definition of $T_{n,g}^{P_{m,u}},$ a straightforward calculation provides
     \begin{equation*}
      T_{n,g}^{P_{m,f_{2}}}(\exp(ilt))=P_{n,g}\left(P_{m,f_{2}}(e^{it})\cdot\exp(ilt)\right)=
     P_{n}^{\perp}\left(P_{m,f_{2}}(e^{it})\cdot\exp(iglt)\right)=P_{m,f_{2}}(e^{it})\cdot\exp(iglt),
     \end{equation*}
     using this we have
     \begin{equation*}
      T_{n,g}^{P_{m,f_{1}}}T_{n,g}^{P_{m,f_{2}}}(\exp(ilt))=T_{n,g}^{P_{m,f_{1}}}\left(P_{m,f_{2}}(e^{it})\cdot\exp(iglt)\right)=
     P_{n,g}\left(P_{m,f_{1}}(e^{it})P_{m,f_{2}}(e^{it})\cdot\exp(iglt)\right)=
     \end{equation*}
     \begin{equation*}
      T_{n,g}^{P_{m,f_{1}}P_{m,f_{2}}}(\exp(iglt)).
     \end{equation*}
     Combining this together with the linearity of the operator $T_{n,g}^{P_{m,f_{1}}P_{m,f_{2}}},$ and the hypothesis given
     in Proposition $\ref{28p}$ results in
     \begin{equation}\label{35}
      \left(T_{n,g}^{P_{m,f_{1}}}T_{n,g}^{P_{m,f_{2}}}-T_{n,g}^{P_{m,f_{1}}P_{m,f_{2}}}\right)(\exp(ilt))=
      T_{n,g}^{P_{m,f_{1}}P_{m,f_{2}}}\left(\exp(iglt)-\exp(ilt)\right)=0,
     \end{equation}
     for every $l\in\left\{\left[\frac{m}{g}\right],\left[\frac{m+1}{g}\right],...,\left[\frac{n-1-m}{g}\right]
     \right\}.$ Relation $(\ref{35})$ means that the image of the operator $T_{n,g}^{P_{m,f_{1}}}T_{n,g}^{P_{m,f_{2}}}-
     T_{n,g}^{P_{m,f_{1}}P_{m,f_{2}}},$ is generated by the image of the set $\left\{\exp(ilt):l=\left[\frac{n-m}{g}\right],
     ,...,\left[\frac{n-1}{g}\right],\text{\,or,\,\,}l=0,...,\left[\frac{m-1}{g}\right]\right\},$
     which is of cardinality less than or equal to $2\left[\frac{m}{g}\right].$ Furthermore, $A_{n,g}(P_{m,f_{1}},P_{m,f_{2}})-
     T_{n,g}(P_{m,f_{1}}P_{m,f_{2}}),$ is the matrix related to the operator $T_{n,g}^{P_{m,f_{1}}}T_{n,g}^{P_{m,f_{2}}}-
     T_{n,g}^{P_{m,f_{1}}P_{m,f_{2}}},$ in the basis $\{\exp(ilt):l=0,1,...,n-1\}.$ Hence, we find that the rank of $A_{n,g}(P_{m,f_{1}},P_{m,f_{2}})-T_{n,g}(P_{m,f_{1}}P_{m,f_{2}})$ is smaller than $2\left[\frac{m}{g}\right],$ and
     Proposition $\ref{28p}$ is proved.
    \end{proof}

    \begin{proposition}\label{29p}
    Let $f_{1},f_{2}\in L^{\infty}(G)$ and let $A_{n,g}(f_{1},f_{2})=T_{n,g}(f_{1})T_{n,g}(f_{2}),$ $h=f_{1}f_{2}.$ Assume that
    $m$ be a non negative integer. Denoting by $P_{m,f_{1}}$ and $P_{m,f_{2}},$ the arithmetic averages of Fourier sums of order $r,$
    with $r\leq m,$ of $f_{1}$ and $f_{2},$ respectively, and assuming further that $T_{n,g}^{P_{m,f_{1}}P_{m,f_{2}}}
    \left(\exp(iglt)-\exp(ilt)\right)=0,$ for every $l\in\left\{\left[\frac{m}{g}\right],\left[\frac{m+1}{g}\right],...,
    \left[\frac{n-1-m}{g}\right]\right\}.$ Then
    \begin{equation}\label{34}
    \|A_{n,g}(f_{1},f_{2})-T_{n,g}(h)\|_{1}=o(n),\text{\,\,\,}n\rightarrow\infty.
    \end{equation}
    \end{proposition}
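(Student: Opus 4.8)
The plan is to run the classical polynomial‑approximation scheme: interpolate between $A_{n,g}(f_{1},f_{2})=T_{n,g}(f_{1})T_{n,g}(f_{2})$ and $T_{n,g}(h)$ through the trigonometric polynomials $P_{m,f_{1}},P_{m,f_{2}}$, bound each resulting term, and only then let $m\to\infty$, with $n$ already sent to infinity. Using that $u\mapsto T_{n,g}(u)$ is linear, I would split
\begin{align*}
A_{n,g}(f_{1},f_{2})-T_{n,g}(h)={}&\underbrace{T_{n,g}(f_{1}-P_{m,f_{1}})\,T_{n,g}(f_{2})}_{E_{1}}+\underbrace{T_{n,g}(P_{m,f_{1}})\,T_{n,g}(f_{2}-P_{m,f_{2}})}_{E_{2}}\\
&{}+\underbrace{\bigl(A_{n,g}(P_{m,f_{1}},P_{m,f_{2}})-T_{n,g}(P_{m,f_{1}}P_{m,f_{2}})\bigr)}_{E_{3}}+\underbrace{T_{n,g}\bigl(P_{m,f_{1}}P_{m,f_{2}}-h\bigr)}_{E_{4}},
\end{align*}
an identity valid for every $m$ for which the standing hypothesis is in force (I read that hypothesis as holding for all large $m$, so Proposition \ref{28p} can be invoked at each level $m$, exactly as in the $g=1$ analysis of \cite{sss}).

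The block $E_{3}$ is controlled by Proposition \ref{28p}: it has rank at most $2\left[\frac{m}{g}\right]$. Since the arithmetic averages $P_{m,f_{i}}$ of Fourier sums do not increase the sup‑norm, $\|P_{m,f_{i}}\|_{L^{\infty}(G)}\le\|f_{i}\|_{L^{\infty}(G)}$, so Lemma \ref{16l} gives $\|E_{3}\|\le 2\|f_{1}\|_{L^{\infty}(G)}\|f_{2}\|_{L^{\infty}(G)}$; hence $\|E_{3}\|_{1}\le 2\left[\frac{m}{g}\right]\|E_{3}\|$, which for a \emph{fixed} $m$ is $O(1)=o(n)$.

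For $E_{1},E_{2},E_{4}$ I would combine the H\"{o}lder inequality $(\ref{24})$ (used with either ordering of the two factors), the uniform bound of Lemma \ref{16l}, and the trace‑norm estimate
\[
\|T_{n,g}(\phi)\|_{1}\le\frac{n}{2\pi}\|\phi\|_{L^{1}(G)}+o(n),\qquad n\to\infty,
\]
obtained by rerunning the proof of Lemma \ref{32l} — decomposition $(\ref{11})$, Lemma \ref{15l}, Lemma \ref{17l} and the classical bound $\|T_{n}(\phi)\|_{1}\le\frac{n}{2\pi}\|\phi\|_{L^{1}(G)}$ — but keeping the $L^{1}$‑norm of the symbol throughout. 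This yields $\|E_{1}\|_{1}\le\bigl(\tfrac{n}{2\pi}\|f_{1}-P_{m,f_{1}}\|_{L^{1}(G)}+o(n)\bigr)\|f_{2}\|_{L^{\infty}(G)}$ and $\|E_{2}\|_{1}\le\|f_{1}\|_{L^{\infty}(G)}\bigl(\tfrac{n}{2\pi}\|f_{2}-P_{m,f_{2}}\|_{L^{1}(G)}+o(n)\bigr)$, while, writing $P_{m,f_{1}}P_{m,f_{2}}-h=P_{m,f_{1}}(P_{m,f_{2}}-f_{2})+(P_{m,f_{1}}-f_{1})f_{2}$ and invoking $(\ref{25})$,
\[
\|E_{4}\|_{1}\le\frac{n}{2\pi}\Bigl(\|f_{1}\|_{L^{\infty}(G)}\|f_{2}-P_{m,f_{2}}\|_{L^{1}(G)}+\|f_{1}-P_{m,f_{1}}\|_{L^{1}(G)}\|f_{2}\|_{L^{\infty}(G)}\Bigr)+o(n).
\]

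Adding the four bounds, dividing by $n$ and letting $n\to\infty$ with $m$ fixed, I obtain
\[
\limsup_{n\to\infty}\frac{1}{n}\|A_{n,g}(f_{1},f_{2})-T_{n,g}(h)\|_{1}\le\frac{1}{\pi}\Bigl(\|f_{1}\|_{L^{\infty}(G)}\|f_{2}-P_{m,f_{2}}\|_{L^{1}(G)}+\|f_{1}-P_{m,f_{1}}\|_{L^{1}(G)}\|f_{2}\|_{L^{\infty}(G)}\Bigr).
\]
Since the arithmetic averages of the Fourier partial sums of an $L^{1}(G)$ function — a fortiori of a bounded one — converge to it in $L^{1}(G)$, the right‑hand side tends to $0$ as $m\to\infty$, while the left‑hand side is independent of $m$; therefore it is $0$, which is the assertion of Proposition \ref{29p}. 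I expect the one genuinely delicate point to be exactly this $L^{1}$ sharpening of Lemma \ref{32l}: its stated $L^{\infty}$ form is too weak here, because for merely bounded $f_{i}$ the truncations $P_{m,f_{i}}$ need not approach $f_{i}$ uniformly, only in $L^{1}(G)$; everything else is the standard approximation‑by‑polynomials argument inherited from the $g=1$ case of \cite{sss}, together with the rank estimate of Proposition \ref{28p} (which is where the extra hypothesis on $T_{n,g}^{P_{m,f_{1}}P_{m,f_{2}}}$ enters).
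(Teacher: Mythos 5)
Your proof is correct and follows essentially the same route as the paper: the identical four-term telescoping decomposition (what you label $E_1,\dots,E_4$ is exactly the splitting in the paper's relation $(\ref{38})$, just written through the linearity of $T_{n,g}(\cdot)$), the rank estimate of Proposition~\ref{28p} for the middle term, H\"older plus the Toeplitz trace-norm bound for the other three, and Ces\`aro convergence in $L^{1}(G)$ to kill the $\alpha(m)$ contribution; your $\limsup$-then-$m\to\infty$ ending is a logically equivalent phrasing of the paper's three-$\epsilon$ argument in $(\ref{44})$--$(\ref{47})$. The one genuinely sharp observation you make is worth keeping in mind: Lemma~\ref{32l} as stated bounds $\|T_{n,g}(f)\|_{1}$ by $\frac{n}{2\pi}\|f\|_{L^{\infty}(G)}+o(n)$, yet the estimates $(\ref{39})$--$(\ref{41})$ in the paper tacitly use the stronger $\|T_{n,g}(\phi)\|_{1}\le\frac{n}{2\pi}\|\phi\|_{L^{1}(G)}+o(n)$, which is what the cited Corollary~4.2 of \cite{1sss,31sss} actually furnishes for the classical block $T_{n}(\phi)$ and is indispensable here since $P_{m,f_i}\to f_i$ only in $L^{1}$, not uniformly; you correctly rederive this $L^{1}$ form from the decomposition $(\ref{11})$ and Lemmas~\ref{15l}, \ref{17l}.
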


    \begin{proof}
     Since $f_{1},f_{2}\in L^{\infty}(G),$ with $A_{n,g}(f_{1},f_{2})=T_{n,g}(f_{1})T_{n,g}(f_{2})$ and $h=f_{1}f_{2},$ to
     estimate the Schatten $1$-norm of the matrix $A_{n,g}(P_{m,f_{1}},P_{m,f_{2}})-T_{n,g}(P_{m,f_{1}}P_{m,f_{2}}),$ we use some
     well known results from the approximation theory alone with Proposition $\ref{28p}.$ Now, $P_{m,f_{1}}$ and $P_{m,f_{2}},$
     being the arithmetic averages of Fourier sums of order $r,$ with $r\leq m,$ of $f_{1}$ and $f_{2},$ respectively, it is obvious
     that the sequences of polynomials $\{P_{m,f_{1}}\}_{m}$ and $\{P_{m,f_{2}}\}_{m},$ converge in $L^{1}$-norm to $f_{1}$ and $f_{2},$
     respectively, as $m$ goes to infinity and that
     \begin{equation}\label{37}
      \|P_{m,f_{1}}\|_{L^{\infty}}\leq\|f_{1}\|_{L^{\infty}}\text{\,\,\,and\,\,\,}\|P_{m,f_{2}}\|_{L^{\infty}}\leq\|f_{2}\|_{L^{\infty}}.
     \end{equation}
     Using the triangular inequality several times, simple computations yield
     \begin{equation*}
       \|\left(A_{n,g}(f_{1},f_{2})-T_{n,g}(h)\right)\|_{1}=\|(A_{n,g}(f_{1},f_{2})-T_{n,g}(P_{m,f_{1}})T_{n,g}(f_{2}))+
       (T_{n,g}(P_{m,f_{1}})T_{n,g}(f_{2})- T_{n,g}(P_{m,f_{1}})\times
     \end{equation*}
     \begin{equation*}
      T_{n,g}(P_{m,f_{2}}))+(T_{n,g}(P_{m,f_{1}})T_{n,g}(P_{m,f_{2}})-T_{n,g}(P_{m,f_{1}}P_{m,f_{2}}))+
      (T_{n,g}(P_{m,f_{1}}P_{m,f_{2}}))-T_{n,g}(h))\|_{1}
     \end{equation*}
     \begin{equation*}
       \leq\|A_{n,g}(f_{1},f_{2})-T_{n,g}(P_{m,f_{1}})T_{n,g}(f_{2})\|_{1}+\|T_{n,g}(P_{m,f_{1}})T_{n,g}(f_{2})-
       T_{n,g}(P_{m,f_{1}})T_{n,g}(P_{m,f_{2}})\|_{1}
     \end{equation*}
     \begin{equation}\label{38}
      +\|T_{n,g}(P_{m,f_{1}})T_{n,g}(P_{m,f_{2}})-T_{n,g}(P_{m,f_{1}}P_{m,f_{2}})\|_{1}+
      \|T_{n,g}(P_{m,f_{1}}P_{m,f_{2}}))-T_{n,g}(h)\|_{1}.
     \end{equation}
     A combination of Lemmas $\ref{16l}$-$\ref{32l},$ H\"{o}lder inequalities $(\ref{24})$-$(\ref{25}),$ and the linearity
     of the $g$-Toeplitz operator related to $T_{n,g}(\cdot)$ gives
     \begin{eqnarray}\label{39}
       \notag \|A_{n,g}(f_{1},f_{2})-T_{n,g}(P_{m,f_{1}})T_{n,g}(f_{2})\|_{1} &\leq&\|T_{n,g}(f_{1})-
     T_{n,g}(P_{m,f_{1}})\|_{1}\|T_{n,g}(f_{2})\|  \\
        &\leq& \left(\frac{n}{2\pi}\|f_{1}-P_{m,f_{1}}\|_{L^{1}}+o(n)\right)\|f_{2}\|_{L^{\infty}},\text{\,\,\,}
      n\rightarrow\infty,
     \end{eqnarray}
     \begin{eqnarray}\label{40}
       \notag \|T_{n,g}(P_{m,f_{1}})T_{n,g}(f_{2})-T_{n,g}(P_{m,f_{1}})T_{n,g}(P_{m,f_{2}})\|_{1} &\leq&\|T_{n,g}(f_{2})-
     T_{n,g}(P_{m,f_{2}})\|_{1}\|T_{n,g}(P_{m,f_{1}})\|  \\
       \notag &\leq& \left(\frac{n}{2\pi}\|f_{2}-P_{m,f_{2}}\|_{L^{1}}+o(n)\right)\|P_{m,f_{1}}\|_{L^{\infty}},\text{\,\,}
      n\rightarrow\infty,\\
        &&
     \end{eqnarray}
     \begin{equation}\label{41}
      \|T_{n,g}(P_{m,f_{1}}P_{m,f_{2}})-T_{n,g}(h)\|_{1} \leq\frac{n}{2\pi}\|P_{m,f_{1}}P_{m,f_{2}}-h\|_{L^{1}}
     ,\text{\,\,\,}n\rightarrow\infty.
     \end{equation}
     Combining estimates $(\ref{39})$-$(\ref{41}),$ we see that the sum of the first, second and fourth terms in relation
     $(\ref{38})$ equals to $n\left(\alpha(m)+\frac{o(n)}{n}\right),$ as $n\rightarrow\infty.$ Since the Cesaro operator
     converges to the identity operator in $L^{1}(G)$-topology, we have
     \begin{equation}\label{41a}
      \underset{m\rightarrow\infty}{\lim}\alpha(m)=0.
     \end{equation}
      Now, let us analyze the term $\|T_{n,g}(P_{m,f_{1}})T_{n,g}(P_{m,f_{2}})-T_{n,g}(P_{m,f_{1}}P_{m,f_{2}})\|_{1},$ of
     estimate $(\ref{38}).$ In fact, since the trace-norm is bounded by the rank times the spectral norm, a combination of
     triangular inequality together with H\"{o}lder inequalities $(\ref{24})$-$(\ref{25}),$ Lemmas $\ref{16l}$ and $\ref{33l},$
     Proposition $\ref{28p},$ and inequality $(\ref{37})$ gives
    \begin{eqnarray}\label{42}
      \notag \|T_{n,g}(P_{m,f_{1}})T_{n,g}(P_{m,f_{2}})-T_{n,g}(P_{m,f_{1}}P_{m,f_{2}})\|_{1}&\leq& 2\left[\frac{m}{g}\right]
      \|T_{n,g}(P_{m,f_{1}})T_{n,g}(P_{m,f_{2}})-T_{n,g}(P_{m,f_{1}}P_{m,f_{2}})\| \\
      \notag &\leq& 2\left[\frac{m}{g}\right]\left(\|T_{n,g}(P_{m,f_{1}})T_{n,g}(P_{m,f_{2}})\|+\|T_{n,g}(P_{m,f_{1}}P_{m,f_{2}})
      \|\right) \\
      \notag &\leq& 2\left[\frac{m}{g}\right]\left(\|P_{m,f_{1}}\|_{L^{\infty}}\|P_{m,f_{2}}\|_{L^{\infty}}
      +\|P_{m,f_{1}}P_{m,f_{2}}\|_{L^{\infty}}\right) \\
       &\leq& 4\left[\frac{m}{g}\right]\|P_{m,f_{1}}\|_{L^{\infty}}\|P_{m,f_{2}}\|_{L^{\infty}}
       \notag \leq 4\left[\frac{m}{g}\right]\|f_{1}\|_{L^{\infty}}\|f_{2}\|_{L^{\infty}},\\
       & &
    \end{eqnarray}
     for each $m\in\mathbb{N}.$ Setting $\gamma=4\|f_{1}\|_{L^{\infty}}\|f_{2}\|_{L^{\infty}},$ and combining $(\ref{42})$ with
     $n\cdot\alpha(m)+o(n)$ (given above), estimate $(\ref{38})$ becomes
     \begin{equation}\label{43}
     \|\left(A_{n,g}(f_{1},f_{2})-T_{n,g}(h)\right)\|_{1}\leq n\cdot\alpha(m)+\gamma\cdot\left[\frac{m}{g}\right]+
      o(n),\text{\,\,\,}n\rightarrow\infty,
     \end{equation}
     for all $m\in\mathbb{N}.$ Let $\epsilon>0,$ according to relation $(\ref{41a}),$ there exists $m_{0}\in\mathbb{N},$
      $m_{0}\neq0,$ such that,
      \begin{equation}\label{44}
      m\geq m_{0}\Rightarrow \alpha(m)<\frac{\epsilon}{3}.
      \end{equation}
      In way similar, $\underset{n\rightarrow\infty}{\lim}\frac{o(n)}{n}=0,$ implies there exists $n_{0}\in\mathbb{N},$
      $n_{0}\neq0,$ such that,
      \begin{equation}\label{45}
      n\geq n_{0}\Rightarrow \frac{o(n)}{n}<\frac{\epsilon}{3}.
      \end{equation}
       Finally, $\underset{n\rightarrow\infty}{\lim}\frac{\gamma\left[\frac{m}{g}\right]}{n}=0,$ implies there exists
      $n_{1}\in\mathbb{N},$ $n_{1}\neq0,$ such that,
      \begin{equation}\label{46}
      n\geq n_{1}\Rightarrow \frac{\gamma\left[\frac{m}{g}\right]}{n}<\frac{\epsilon}{3}.
      \end{equation}
      Putting $N=\max\{m_{0},n_{0},n_{1}\}+1$ and combining estimates $(\ref{44})$-$(\ref{46})$ along with relation $(\ref{43}),$
      we obtain
      \begin{equation}\label{47}
      n\geq N\Rightarrow \frac{1}{n}\|\left(A_{n,g}(f_{1},f_{2})-T_{n,g}(h)\right)\|_{1}<\epsilon,
      \end{equation}
      which ends the proof of Proposition $\ref{29p}.$
      \end{proof}

      The proof of the following Proposition establishes item $(i3)$ of Lemma $\ref{20l}.$

    \begin{proposition}\label{30p}
    Let $f_{1},f_{2}\in L^{\infty}(G)$ and let $P$ be a fixed polynomial independent of $n.$ Letting $A_{n,g}(f_{1},f_{2})=T_{n,g}(f_{1})T_{n,g}(f_{2}),$ and $h=f_{1}f_{2},$ then
    there exist non negative constants $\widehat{C}_{1}$ and $\widehat{C}_{2}$ independent of $n$ such that,
    \begin{equation}\label{47}
    \|P\left(A_{n,g}(f_{1},f_{2})\right)\|_{1}\leq n\left\{\frac{\widehat{C}_{1}}{mes(G)}   \int_{G}\left|P((\theta_{g}h)(t))\right|dt+\widehat{C}_{2}\right\}+o(n),
    \end{equation}
      for every $n$ large enough.
    \end{proposition}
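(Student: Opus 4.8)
The plan is to replace the product $A_{n,g}(f_{1},f_{2})$ by the single $g$-Toeplitz matrix $T_{n,g}(h)$ and then to estimate $\|P(T_{n,g}(h))\|_{1}$ by separating off the constant term of $P$. The reduction step is as follows: $\{A_{n,g}(f_{1},f_{2})\}$ is uniformly bounded in spectral norm by Lemma \ref{33l}, $\{T_{n,g}(h)\}$ is uniformly bounded by Lemma \ref{16l} (as $h=f_{1}f_{2}\in L^{\infty}(G)$), and $\|A_{n,g}(f_{1},f_{2})-T_{n,g}(h)\|_{1}=o(n)$ by Proposition \ref{29p}; hence these two sequences are asymptotically equivalent in the sense of Definition \ref{27d}. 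Applying Lemma \ref{31l} to the fixed polynomial $P$, the sequences $\{P(A_{n,g}(f_{1},f_{2}))\}$ and $\{P(T_{n,g}(h))\}$ are asymptotically equivalent too, so $\|P(A_{n,g}(f_{1},f_{2}))-P(T_{n,g}(h))\|_{1}=o(n)$ and it suffices to bound $\|P(T_{n,g}(h))\|_{1}$.

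For that bound I would write $P(z)=P(0)+zQ(z)$, with $Q$ a polynomial of degree $\deg P-1$ independent of $n$, so that $P(T_{n,g}(h))=P(0)I_{n}+T_{n,g}(h)\,Q(T_{n,g}(h))$. By the triangle inequality and the H\"{o}lder inequality (\ref{24}),
\[ \|P(T_{n,g}(h))\|_{1}\le |P(0)|\,n+\|T_{n,g}(h)\|_{1}\,\|Q(T_{n,g}(h))\|. \]
Since $\|T_{n,g}(h)\|\le\|h\|_{L^{\infty}(G)}$ by Lemma \ref{16l}, sub-multiplicativity of the spectral norm gives $\|Q(T_{n,g}(h))\|\le\widehat{K}$ for a constant $\widehat{K}$ depending only on $Q$ and $\|h\|_{L^{\infty}(G)}$, and $\|T_{n,g}(h)\|_{1}\le\frac{n}{2\pi}\|h\|_{L^{\infty}(G)}+o(n)$ by Lemma \ref{32l}. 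Combining with the reduction above,
\[ \|P(A_{n,g}(f_{1},f_{2}))\|_{1}\le\left(|P(0)|+\frac{\widehat{K}}{2\pi}\|h\|_{L^{\infty}(G)}\right)n+o(n). \]

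It remains to match the right-hand side with the claimed form. For $g\ge 2$ we have $\theta_{g}=0$ by (\ref{22}), so $P((\theta_{g}h)(t))=P(0)$ for a.e.\ $t$ and $\frac{1}{mes(G)}\int_{G}|P((\theta_{g}h)(t))|\,dt=|P(0)|$; choosing $\widehat{C}_{1}=1$ and $\widehat{C}_{2}=\frac{\widehat{K}}{2\pi}\|h\|_{L^{\infty}(G)}$ then yields the asserted inequality. For $g=1$, where $\theta_{g}=1$, the last term must be sharpened: iterating Proposition \ref{29p} with $g=1$ gives $\|P(T_{n}(h))-T_{n}(P(h))\|_{1}=o(n)$, and the classical singular-value distribution for Toeplitz sequences with the bounded symbol $P(h)$ gives $\|T_{n}(P(h))\|_{1}=\frac{n}{2\pi}\int_{-\pi}^{\pi}|P(h(t))|\,dt+o(n)$, so one may take $\widehat{C}_{1}=1$ and $\widehat{C}_{2}=0$.

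The step I expect to be the main obstacle is the estimate of $\|P(T_{n,g}(h))\|_{1}$: the constant term $P(0)$ by itself contributes a full $|P(0)|\,n$ to the trace norm and must be kept out of any $o(n)$-approximation, which is exactly why the factorization $P(z)=P(0)+zQ(z)$ together with the mixed H\"{o}lder inequality $\|XY\|_{1}\le\|X\|_{1}\|Y\|$ and Lemma \ref{32l} is the right mechanism for producing an $O(n)$ bound with the correct leading constant. A secondary difficulty, present only for $g=1$, is that Lemma \ref{32l} alone would leave the too-crude quantity $\|h\|_{L^{\infty}(G)}$ in place of $\int_{-\pi}^{\pi}|P(h(t))|\,dt$; recovering the sharp $L^{1}$ bound on $P(h)$ there needs the Toeplitz-algebra identity $\|P(T_{n}(h))-T_{n}(P(h))\|_{1}=o(n)$ and the classical singular-value distribution rather than Lemma \ref{32l}.
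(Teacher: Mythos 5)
Your reduction to $\|P(T_{n,g}(h))\|_{1}$ via Proposition \ref{29p} and Lemma \ref{31l} is exactly the paper's opening move, but from there the two arguments diverge. The paper estimates $\|P(T_{n,g}(h))\|_{1}$ by passing through the classical Toeplitz matrix $T_{n}(\theta_{g}h)$: it uses the decomposition \pref{11} to write $T_{n,g}(h)=T_{n}(\theta_{g}h)+N_{n}+o(n)$-error with $\|N_{n}\|_{1}=\|h\|\lceil n/g\rceil$, invokes Lemma \ref{48l} to control the polynomial of the perturbed matrix $P(T_{n}(\theta_{g}h)+N_{n})$, and then applies the GLT singular-value distribution $\{P(T_{n}(\theta_{g}h))\}\sim_{\sigma}(P(\theta_{g}h),G)$ to obtain the sharp leading term $\frac{n}{mes(G)}\int_{G}|P(\theta_{g}h)|$. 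You instead factor $P(z)=P(0)+zQ(z)$ and split $\|P(T_{n,g}(h))\|_{1}\le|P(0)|\,n+\|T_{n,g}(h)\|_{1}\|Q(T_{n,g}(h))\|$, then finish with Lemmas \ref{16l} and \ref{32l}; since for $g\ge2$ the target integral degenerates to $|P(0)|$ (as $\theta_{g}=0$), this cruder bound already has the right form, and you correctly observe that only the $g=1$ case requires the sharper GLT/Szeg\H{o} estimate $\|T_{n}(P(h))\|_{1}=\frac{n}{2\pi}\int|P(h)|+o(n)$. Your route is more elementary — it avoids the correction matrix $N_{n}$ and Lemma \ref{48l} entirely — and isolates clearly why the $P(0)I_{n}$ contribution cannot be pushed into $o(n)$. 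What it gives up, relative to the paper, is the $1/g$ decay in $\widehat{C}_{2}$ (the paper's constant is $\widehat{C}_{m}\|h\|/g$, yours is $\widehat{K}\|h\|_{L^{\infty}}/(2\pi)$, $g$-independent), but the statement of Proposition \ref{30p} does not require this extra sharpness. Both proofs (yours and the paper's) implicitly carry over the extra hypothesis of Proposition \ref{29p} concerning $T_{n,g}^{P_{m,f_{1}}P_{m,f_{2}}}(\exp(iglt)-\exp(ilt))=0$, which is silently omitted from the formal statement of Proposition \ref{30p}, and both produce a $\widehat{C}_{2}$ that in fact depends on the polynomial $P$; these are defects inherited from the paper's setup rather than flaws specific to your argument.
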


     Before the proof let us establish the following result.

     \begin{lemma}\label{48l}
     Let $\{X_{n}\}$ and $\{Y_{n}\}$ be two sequences of matrices of order $n,$ and let
     $P(z)=\underset{r=0}{\overset{m}\sum}a_{r}z^{r},$ be any fixed polynomial of degree $m,$ independent of $n.$ Suppose
     that both $\{X_{n}\}$ and $\{Y_{n}\}$ are uniformly bounded by a positive constant $\widehat{C}_{0},$ independent of
     $n,$ and $\|Y_{n}\|_{1}=\alpha\cdot\left\lceil\frac{n}{g}\right\rceil,$ where $\alpha$ is a positive constant independent
     on $n.$ Then $\|P(X_{n}+Y_{n})\|_{1}\leq\|P(X_{n})\|_{1}+\widehat{C}_{m}\cdot \left\lceil\frac{n}{g}\right\rceil,$
     where $\widehat{C}_{m}$ is a positive constant which depends on $\alpha,$ $m$ and $\widehat{C}_{0},$ but independent on $n.$
      \end{lemma}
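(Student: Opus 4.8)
The plan is to follow closely the pattern of the proof of Lemma \ref{31l}, combining the noncommutative binomial expansion with the mixed H\"older inequality \pref{24}. First I would set $P(z)=\sum_{r=0}^{m}a_{r}z^{r}$ and expand, for each $r=0,1,\dots,m$, the power $(X_{n}+Y_{n})^{r}$. Collecting the all-$X_{n}$ word separately, one obtains $(X_{n}+Y_{n})^{r}=X_{n}^{r}+S_{n,r}$, where $S_{n,r}$ is a sum of at most $2^{r}-1\le 2^{m}$ terms, each term being a product of exactly $r$ factors taken from $\{X_{n},Y_{n}\}$ and containing at least one factor equal to $Y_{n}$.

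Next, multiplying the identity $(X_{n}+Y_{n})^{r}-X_{n}^{r}=S_{n,r}$ by $a_{r}$, summing over $r=0,\dots,m$, and applying the triangle inequality for the trace norm gives $\|P(X_{n}+Y_{n})-P(X_{n})\|_{1}\le\sum_{r=0}^{m}|a_{r}|\,\|S_{n,r}\|_{1}$. The key estimate is the bound on $\|S_{n,r}\|_{1}$: in each term of $S_{n,r}$ I would isolate one occurrence of $Y_{n}$ and apply \pref{24} repeatedly, in both of its forms $\|AB\|_{1}\le\|A\|_{1}\|B\|$ and $\|AB\|_{1}\le\|A\|\|B\|_{1}$, so that the chosen $Y_{n}$ is measured in the trace norm while each of the remaining $r-1$ factors is measured in the spectral norm. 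Since $\|X_{n}\|,\|Y_{n}\|\le\widehat{C}_{0}$ and $\|Y_{n}\|_{1}=\alpha\lceil\frac{n}{g}\rceil$, every such term has trace norm at most $\widehat{C}_{0}^{\,r-1}\alpha\lceil\frac{n}{g}\rceil$, whence $\|S_{n,r}\|_{1}\le(2^{r}-1)\widehat{C}_{0}^{\,r-1}\alpha\lceil\frac{n}{g}\rceil$.

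Combining these bounds yields $\|P(X_{n}+Y_{n})-P(X_{n})\|_{1}\le\bigl(\sum_{r=1}^{m}|a_{r}|(2^{r}-1)\widehat{C}_{0}^{\,r-1}\alpha\bigr)\lceil\frac{n}{g}\rceil=:\widehat{C}_{m}\lceil\frac{n}{g}\rceil$, where $\widehat{C}_{m}$ depends only on $\alpha$, on $m$ (through the degree and the coefficients $a_{r}$ of the fixed polynomial $P$) and on $\widehat{C}_{0}$, but not on $n$. One final application of the triangle inequality, $\|P(X_{n}+Y_{n})\|_{1}\le\|P(X_{n})\|_{1}+\|P(X_{n}+Y_{n})-P(X_{n})\|_{1}$, gives the asserted inequality. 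The only delicate point is the bookkeeping in the estimate of $\|S_{n,r}\|_{1}$: one must be careful that, in a product of $r$ matrices, exactly one factor is measured in the trace norm and all the others in the spectral norm, which is precisely what the two forms of \pref{24} permit; the rest is routine.
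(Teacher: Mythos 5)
Your proposal is correct and follows essentially the same route as the paper: expand $(X_{n}+Y_{n})^{r}=X_{n}^{r}+S_{n,r}$ (the paper writes $W_{n}$ for what you call $S_{n,r}$), isolate one $Y_{n}$-factor in each word of $S_{n,r}$, put it in the trace norm via the two forms of the mixed H\"older inequality while bounding the remaining factors in the spectral norm by $\widehat{C}_{0}$, and then use $\|Y_{n}\|_{1}=\alpha\lceil n/g\rceil$. Your version is slightly more explicit than the paper's (you make the count $2^{r}-1$ and the resulting constant $\widehat{C}_{m}=\sum_{r=1}^{m}|a_{r}|(2^{r}-1)\widehat{C}_{0}^{\,r-1}\alpha$ concrete, where the paper simply asserts a bound $\|\widetilde{W}_{n}\|\le\widehat{C}_{r}$), but the underlying argument is identical.
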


     \begin{proof}
      Straightforward computations after rearranging terms yield
     \begin{equation}\label{ll}
      P(X_{n}+Y_{n})=\underset{r=0}{\overset{m}\sum}a_{r}(X_{n}+Y_{n})^{r}=a_{0}I_{n}+\underset{r=1}{\overset{m}\sum}a_{r}\{(X_{n})^{r}+W_{n}\},
     \end{equation}
      where the matrix $W_{n}$ is a sum of several terms each being a product of $X_{n}$ and $Y_{n}$, but containing
     at least one $Y_{n}$ (to see this use the binomial theorem applied to matrices to expand $(X+Y)^{r}$), that is,
      a polynomial of fixed degree independent on $n,$ having as variables $X_{n}$ and $Y_{n},$ and whose each term contains
      at least one $Y_{n}.$ In this development we used the convention $X^{0}=I_{n},$ for a square matrix $X$ of
       size $n,$ $I_{n}$ being the identity matrix of order $n.$ Combining the triangular inequality and the H\"{o}lder
       inequality several times and after rearranging terms, equality $(\ref{ll})$ provides
      \begin{equation}\label{ll1}
       \|P(X_{n}+Y_{n})\|_{1}\leq \|P(X_{n})\|_{1}+\|Y_{n}\|_{1}\underset{r=0}{\overset{m}\sum}|a_{r}|\cdot\|\widetilde{W}_{n}\|.
      \end{equation}
      Since $X_{n}$ and $Y_{n}$ are uniformly bounded by a positive constant, $\widehat{C}_{0},$ independent of $n,$ where
      the matrix $\widetilde{W}_{n}$ is also a sum of several terms each being a product of $X_{n}$ and $Y_{n},$ but not necessarily
      containing at least one $Y_{n},$ using again the triangular inequality several times together with the property of operator
      norm, simple calculations show that there exists a positive constant $\widehat{C}_{r},$ that depends $\widehat{C}_{0},$
      but independent on $n$ such that, $\|\widetilde{W}_{n}\|\leq \widehat{C}_{r}.$ Utilizing this, there is a positive constant
      $\widehat{C}_{m}$ which depends on both parameters $m$ and $\widehat{C}_{0},$ but independent of $n,$ that satisfies $\underset{r=0}{\overset{m}\sum}|a_{r}|\|\widetilde{W}_{n}\|\leq \widehat{C}_{m}.$ This fact, alone with the restriction $\|Y_{n}\|_{1}=\alpha\cdot\left\lceil\frac{n}{g}\right\rceil,$ and estimate $(\ref{ll1})$ end the proof of Lemma $\ref{48l}.$
      \end{proof}

     \begin{proof} (of Proposition $\ref{30p}$)
     Using the property that the sequence of Toeplitz matrices with $L^{1}(G)$ symbols belongs to the GLT class together with
     their algebra (see Section $1.2,$ page $8$, in \cite{sss}), it holds
     \begin{equation}\label{48}
     \{P(T_{n}(\theta_{g}h))\}\sim_{\sigma}(P(\theta_{g}h),G).
     \end{equation}
     Since $\theta_{g}h\in L^{\infty}(G)\subset L^{1}(G),$ and $P$ is a fixed polynomial independent of $n,$ it is easy
     to see that the function $P(\theta_{g}h)$ belongs to $L^{1}(G).$ That is, there are non negative constants $\widetilde{m}$
     and $M,$ (with $\widetilde{m}\neq0$ if $P(\theta_{g}h)$ is non null) such that, $\widetilde{m}\leq\frac{1}{mes(G)}\int_{G}
      \left|P((\theta_{g}h)(t))\right|dt\|_{L^{1}(G)}\leq M.$ Taking the function $F$ defined by $F(x)=\left\{
                                                                 \begin{array}{ll}
                                                                   x-\widetilde{m}, & \hbox{if $x\in[0,\widetilde{m}]$,} \\
                                                                   0, & \hbox{for $x\in\mathbb{R}_{0}^{+}\setminus[0,\widetilde{m}],$}
                                                                 \end{array}
                                                               \right.$ it is obvious that $F\in\mathcal{C}_{0}(\mathbb{R}^{+}_{0}).$
    Using this, relation $(\ref{48})$ means that for $n$ large enough, we have
     \begin{equation}\label{49}
      \frac{1}{n}\underset{j=1}{\overset{n}\sum}\sigma_{j}(P(T_{n}(\theta_{g}h)))\leq\underset{\geq0}{\underbrace{\frac{1}{mes(G)}\int_{G}
      \left|P((\theta_{g}h)(t))\right|dt-\widetilde{m}}}\leq\frac{1}{mes(G)}\int_{G}\left|P((\theta_{g}h)(t))\right|dt,
     \end{equation}
     where $\sigma_{j}(P(T_{n}(\theta_{g}h))),$ $j=1,2,...,n,$ are the singular values of $P(T_{n}(\theta_{g}h)).$ Since
     the sum, $\underset{j=1}{\overset{n}\sum}\sigma_{j}(P(T_{n}(\theta_{g}h))),$ is the trace norm of the matrix
      $P(T_{n}(\theta_{g}h)),$ relation $(\ref{49})$ becomes
      \begin{equation}\label{50}
      \|P(T_{n}(\theta_{g}h))\|_{1}\leq\frac{n}{mes(G)}\int_{G}\left|P((\theta_{g}h)(t))\right|dt,
     \end{equation}
      for $n$ sufficiently large.\\

      On the other hand, combining both relations $(\ref{11})$ and $(\ref{18}),$ H\"{o}lder inequalities $(\ref{24})$-$(\ref{25}),$
     triangular inequality, and the definition of the function $\theta_{g}$ (given by relation $(\ref{22})$), we obtain
     \begin{equation*}
      \|T_{n,g}(h))-T_{n}(\theta_{g}h)\|_{1}\leq\|T_{n,g}(h)-T_{n}(h)[\widehat{Z}_{n,g}|0]\|_{1}+\|T_{n}(h)[\widehat{Z}_{n,g}|0]-
      T_{n}(\theta_{g}h)\|_{1}=\|[0|\mathcal{T}_{n,g}]\|_{1}+
     \end{equation*}
      \begin{equation}\label{50e}
      \|T_{n}(h)[\widehat{Z}_{n,g}|0]-T_{n}(\theta_{g}h)\|_{1}=o(n)+\|T_{n}(h)[\widehat{Z}_{n,g}|0]-T_{n}(\theta_{g}h)\|_{1}=
       \left\{
              \begin{array}{ll}
              o(n) & \hbox{if $g=1$,} \\
              o(n)+\|T_{n}(h)[\widehat{Z}_{n,g}|0]\|_{1}& \hbox{for $g>1$,}
              \end{array}
             \right.
      \end{equation}
     as $n\rightarrow\infty,$ where the last equality of $(\ref{50e})$ comes from $[\widehat{Z}_{n,1}|0]=I_{n},$ $I_{n}$ is
     the identity matrix of order $n.$\\

     For $g>1,$ the application of the H\"{o}lder inequality yields $\|T_{n}(h)[\widehat{Z}_{n,g}|0]\|_{1}\leq\|T_{n}(h)\|
     \|[\widehat{Z}_{n,g}|0]\|_{1}.$ Furthermore, it is easy to prove that
     $[\widehat{Z}_{n,g}|0]^{'}[\widehat{Z}_{n,g}|0]=\begin{bmatrix}
                                                                                                                I_{\mu_{g}} &0 \\
                                                                                                                0 & 0 \\
                                                                                                              \end{bmatrix},$
    where $'$ denotes the conjugate transpose, $I_{\mu_{g}}$ is the identity matrix of size $\mu_{g}=
    \left\lceil\frac{n}{g}\right\rceil$ and $\left\lceil\frac{n}{g}\right\rceil$ is the smallest integer greater than
    $\frac{n}{g}.$ Using the relation $[\widehat{Z}_{n,g}|0]^{'}[\widehat{Z}_{n,g}|0]=\begin{bmatrix}
                                                                                                     I_{\mu_{g}} &0 \\
                                                                                                     0 & 0 \\
                                                                                                     \end{bmatrix},$ it holds $\|[\widehat{Z}_{n,g}|0]\|_{1}
                                                                                                     =\left\lceil\frac{n}{g}\right\rceil.$
    This equality together with relation $(\ref{50e})$ provide
    \begin{equation}\label{51e}
     \|T_{n,g}(h))-T_{n}(\theta_{g}h)\|_{1}\leq \|h\|\left\lceil\frac{n}{g}\right\rceil+o(n),\text{\,\,\,\,}n\rightarrow\infty.
    \end{equation}
     Using estimate $(\ref{51e}),$ there exists a square matrix $N_{n}\in\mathbb{C}^{n\times n},$ with $\|N_{n}\|_{1}=
    \|h\|\cdot\left\lceil\frac{n}{g}\right\rceil$ and $\|N_{n}\|\leq\widehat{C},$ where $\widehat{C}$ is a positive constant
    independent of $n$ (for example, take $N_{n}=\|h\|\cdot[\widehat{Z}_{n,g}|0]$) such that,
     \begin{equation}\label{52e}
     \|T_{n,g}(h))-T_{n}(\theta_{g}h)-N_{n}\|_{1}\leq o(n),\text{\,\,\,\,}n\rightarrow\infty.
    \end{equation}
     Utilizing $(\ref{52e}),$ straightforward computations show that the sequences of matrices $\{T_{n,g}(h)\}$ and $\{T_{n}(\theta_{g}h)+N_{n}\}$ are asymptotically equivalent. Let $P$ be any fixed polynomial of
     degree $m,$ independent of $n.$ Applying Lemma $\ref{31l},$ $\{P(T_{n,g}(h))\}$ and $\{P(T_{n}(\theta_{g}h)+N_{n})\}$
     are asymptotically equivalent. This implies
     \begin{equation}\label{53e}
     \|P(T_{n,g}(h))-P(T_{n}(\theta_{g}h)+N_{n})\|_{1}= o(n),\text{\,\,\,\,}n\rightarrow\infty.
     \end{equation}
      Combining triangular inequality and relation $(\ref{53e})$ provides
      \begin{equation}\label{54e}
     \|P(T_{n,g}(h))\|_{1}\leq\|P(T_{n}(\theta_{g}h)+N_{n})\|_{1}+o(n),\text{\,\,\,\,}n\rightarrow\infty.
     \end{equation}
      Furthermore, since $T_{n}(\theta_{g}h),$ $N_{n}$ are uniformly bounded by a positive constant $\widehat{C}_{2},$ independent of
     $n$ and $P$ is a fixed polynomial of degree $m,$ a combination of estimate $(\ref{54e})$ and Lemma $\ref{48l}$ shows that
     there is a positive constant $\widehat{C}_{m}$ that depends on both polynomial $P$ and parameter $\widehat{C}_{2},$ but
     independent on $n$ such that,
      \begin{equation}\label{55e}
     \|P(T_{n,g}(h))\|_{1}\leq\|P(T_{n}(\theta_{g}h)\|_{1}+\widehat{C}_{m}\cdot\|h\|\cdot\left\lceil\frac{n}{g}\right\rceil+o(n),
     \text{\,\,\,\,}n\rightarrow\infty.
     \end{equation}
      Finally, taking into consideration Lemmas $\ref{16l}$ and $\ref{33l}$ together with Proposition $\ref{29p},$ we see
     that the matrix sequences $\{A_{n,g}(f_{1},f_{2})\}$ and $\{T_{n,g}(h)\}$ are asymptotically equivalent (in the
     sense of Definition $\ref{27d}$).  Applying again Lemma $\ref{31l},$ we obtain
     \begin{equation}\label{59}
     \|P(A_{n,g}(h))-P(T_{n,g}(h))\|_{1}= o(n),\text{\,\,\,\,} n\rightarrow\infty,
    \end{equation}
     for any fixed polynomial $P.$ Now, combining the triangular inequality and estimate $(\ref{59})$ yields
     \begin{equation*}
     \|P(A_{n,g}(f_{1},f_{2}))\|_{1}\leq\|P(T_{n,g}(h))\|_{1}+o(n),\text{\,\,\,\,} n\rightarrow\infty.
     \end{equation*}
     This fact together with estimates $(\ref{55e})$ and $(\ref{50})$ give
     \begin{equation}\label{59a}
      \|P(A_{n,g}(f_{1},f_{2}))\|_{1}\leq\frac{n}{mes(G)}\int_{G}\left|P((\theta_{g}h)(t))\right|dt
      +\widehat{C}_{m}\cdot\|h\|\cdot\left\lceil\frac{n}{g}\right\rceil+o(n),\text{\,\,\,\,} n\rightarrow\infty.
     \end{equation}
     The proof of Proposition $\ref{30p}$ is ended thanks to estimate $\left\lceil\frac{n}{g}\right\rceil
     \leq\frac{n}{g}+1$ and the limit: $\underset{n\rightarrow\infty}{\lim}\frac{o(n)}{n}=0.$
      \end{proof}
     Therefore, requirement \textbf{(i3)} of Lemma $\ref{20l}$ is established.\\

     We are now ready to prove Theorem $\ref{23t}.$

     \begin{proof} (of Theorem $\ref{23t}$)
     Suppose that $f_{1},f_{2}\in L^{\infty}(G),$ and set $h=f_{1}f_{2},$ $A_{n,g}(f_{1},f_{2})=T_{n,g}(f_{1})T_{n,g}(f_{2}).$
     Using Lemma $\ref{33l},$ the matrix sequence $\{A_{n,g}(f_{1},f_{2})\}$ is uniformly bounded by a positive constant independent of $n.$
     So item \textbf{(i1)} of Lemma $\ref{20l}$ is satisfied. Now, since $h\in L^{\infty}(G),$ is real-valued, $h$ satisfies the assumptions
     of Theorem $\ref{21t},$ then
     $\{T_{n,g}(h)\}\sim_{\lambda}(\theta_{g}h,G),$ and according to the definition of $\theta^{(g)}_{h}$ (see Theorem $3.1$ in
     \cite{en}) and the formula of $\theta_{g}$ (given by relation $(\ref{22})$), it easy to see that $\theta^{(g)}_{h}=\theta_{g}h.$
     Using the estimate $|tr(X)|\leq\|X\|_{1}$ for a square matrix $X$ (for example, see \cite{2sss}, Theorem $II.3.6$,
      Eq. $(II.23)$), for every non negative integer $d,$ we have
     \begin{equation}\label{60}
      \left|tr\left((A_{n,g}(f_{1},f_{2}))^{d}-(T_{n,g}(h))^{d}\right)\right|\leq\left\|(A_{n,g}(f_{1},f_{2}))^{d}-
     (T_{n,g}(h))^{d}\right\|_{1}.
     \end{equation}
     The aim of estimate $(\ref{60})$ is to prove that $\frac{1}{n}tr\left((A_{n,g}(f_{1},f_{2}))^{d}-(T_{n,g}(h))^{d}\right)$
     tends to zero when $n$ goes to infinity. Putting $P(z)=z^{d},$ an application of relation $(\ref{59})$ gives
      \begin{equation}\label{61}
       \left\|(A_{n,g}(f_{1},f_{2}))^{d}-(T_{n,g}(h))^{d}\right\|_{1}=o(n),\text{\,\,\,\,}n\rightarrow\infty.
      \end{equation}
     Using the property that the trace of a sum of matrices equals the sum of traces together with a combination of
     relations $(\ref{60})$ and $(\ref{61}),$ we get
     \begin{equation*}
     \underset{n\rightarrow\infty}{\lim}\left|\frac{1}{n}tr\left((A_{n,g}(f_{1},f_{2}))^{d}-(T_{n,g}(h))^{d}\right)\right|=0,
     \end{equation*}
     which implies
     \begin{equation}\label{62}
      \underset{n\rightarrow\infty}{\lim}\frac{1}{n}tr\left((A_{n,g}(f_{1},f_{2}))^{d}\right)=
      \underset{n\rightarrow\infty}{\lim}\frac{1}{n}tr\left((T_{n,g}(h))^{d}\right).
     \end{equation}
     Applying Theorem $\ref{21t},$ we have $\{T_{n,g}(h)\}\sim_{\lambda}(\theta_{g}h,G).$ Taking a function
     $F\in\mathcal{C}_{0}(\mathbb{C})$ defined by $F(z)=z^{d}$ $\forall z\in G,$ considering relation $(\ref{2})$ together with
     the property that the trace of a square matrix equals the sum of its eigenvalues, we have
     \begin{eqnarray}\label{63}
       \notag \underset{n\rightarrow\infty}{\lim}\frac{1}{n}tr\left((T_{n,g}(h))^{d}\right) &=&
       \underset{n\rightarrow\infty}{\lim}\frac{1}{n}\underset{j=1}{\overset{n}\sum}\lambda_{j}\left((T_{n,g}(h))^{d}\right)
       = \underset{n\rightarrow\infty}{\lim}\frac{1}{n}\underset{j=1}{\overset{n}\sum}F(\lambda_{j}(T_{n,g}(h)))= \\
        && \frac{1}{mes(G)}\int_{G}F((\theta_{g}h)(z))dz = \frac{1}{mes(G)}\int_{G}(\theta_{g}h)^{^{d}}(z)dz .
     \end{eqnarray}
     Combining relations $(\ref{62})$ and $(\ref{63}),$ requirement \textbf{(i2)} of Lemma $\ref{20l}$ is then satisfied. Now,
     restriction \textbf{(i3)} of Lemma $\ref{20l}$ is also satisfied according to Proposition $\ref{30p}.$
      An assembling of items \textbf{(i1)}, \textbf{(i2)} and \textbf{(i3)} (according to Lemma $\ref{20l}$) the sequence of matrices
     $\{A_{n,g}(f_{1},f_{2})\},$ is weakly clustered at $\mathcal{A}rea(\mathcal{ER}(\theta_{g}h)),$ and relation $(\ref{2})$
     holds for every function $F\in\mathcal{C}_{0}(\mathbb{C}),$ which is holomorphic in the interior of $\mathcal{A}rea(\mathcal{ER}
     (\theta_{g}h)).$ Finally, the function $\theta_{g}h\in L^{\infty}(G),$ belongs to the Tilli class. This means that
     $\mathcal{ER}(\theta_{g}h)$ does not disconnect the complex field and has an empty interior, the last
      condition \textbf{(i4)} of Lemma $\ref{20l}$ is also satisfied. Applying Lemma $\ref{20l},$ we obtain
     \begin{equation}\label{64}
      \{T_{n,g}(f_{1})T_{n,g}(f_{1})\}\sim_{\lambda}(\theta_{g}f_{1}f_{2},G),
     \end{equation}
     where the function $\theta_{g}$ is given by relation $(\ref{22}).$
     \end{proof}

   \begin{remark}
   Although the main result of this paper considered the additional assumption that the arithmetic averages of Fourier sums
   of order $r,$ with $r\leq m,$ (that is, the polynomials $P_{m,f_{1}}$ and $P_{m,f_{2}}$) of $f_{1}$ and $f_{2},$ respectively,
   satisfy the condition: for every $l\in\left\{\left[\frac{m}{g}\right],\left[\frac{m+1}{g}\right],..., \left[\frac{n-1-m}{g}
    \right]\right\},$ $T_{n,g}^{P_{m,f_{1}}P_{m,f_{2}}}\left(\exp(iglt)-\exp(ilt)\right)=0.$ This hypothesis played an important
    role in the proof of Proposition $\ref{28p},$ which was crucial in proving both Proposition $\ref{29p}$ and Theorem $\ref{23t}$.
    However, for $g\geq2$ the purpose of the work is to show that the product of $g$-Toeplitz sequences is clustered at zero,
    which is important in the context of the preconditioning problem.
   \end{remark}

   \section{Generalization to block and multilevel setting}\label{IV}
   We start this section by recalling that it is proven in \cite{sss} that the sequence $\{T_{n}(f_{1})T_{n}(f_{2})\}$
   is distributed (in the sense of eigenvalues) as the symbol $h=f_{1}f_{2}$ if $f_{1},f_{2}\in L^{\infty}(\mathbb{T}^{d})$ and $h$
   is real-valued ($d\in\mathbb{N}$, $d\geq1$, $\mathbb{T}=(-\pi,\pi)$). Furthermore, $\mathcal{ER}(h)$ is a weak cluster for
   $\{T_{n}(f)T_{n}(f_{2})\}$ and any $s\in \mathcal{ER}(h)$ strongly attracts the spectra of $\{T_{n}(f_{1})T_{n}(f_{2})\}$ with infinite order.
   This fact is sufficient for extending the proof of the relation $\{T_{n,g}(f_{1})T_{n,g}(f_{2})\}\sim_{\lambda}(\theta_{g}f_{1}f_{2},\mathbb{T})$
    to the case where $\theta_{g}$ is defined as in $(\ref{22})$ and $f_{1},f_{2}\in L^{\infty}(\mathbb{T}^{d}).$\\

    Let us consider the general multilevel case, where $f_{1},f_{2}\in L^{\infty}(\mathbb{T}^{d})$ are chosen to be matrix-valued.
    When $g$ is a positive vector, we have
    \begin{equation}
        \{T_{n,g}(f_{1})T_{n,g}(f_{2})\}\sim_{\lambda}(\theta_{g}f_{1}f_{2},\mathbb{T}^{d})
    \end{equation}
    where
    \begin{equation}
    \theta_{g}f_{1}f_{2}=\left\{
                       \begin{array}{ll}
                         f_{1}f_{2} & \hbox{if $g=e$}; \\
                         0 & \hbox{ for $g>e$.}
                       \end{array}
                     \right.,\text{\,\,\,\,\,\,\,\,\,\,\,\,\,\,\,\,}\mathcal{ER}(\theta_{g}f_{1}f_{2})=\left\{
                          \begin{array}{ll}
                            \mathcal{ER}(f_{1}f_{2}) & \hbox{if $g=e$}; \\
                            \{\underline{0}\} & \hbox{for $g>e$.}
                          \end{array}
                        \right.
   \end{equation}
   All the arguments are extended componentwise, that is, $g=e$ and $g>e$, respectively, means that $g_{r}=1$
   and $g_{r}>1,$ for $r=1,...,d$. In addition, $\mathcal{ER}(\theta_{g}f_{1}f_{2}),$ is a weak cluster for $\{T_{n,g}(f_{1})
   T_{n,g}(f_{2})\}$ (in the sense of Definition $\ref{6d}$) and any $s\in \mathcal{ER}(\theta_{g}f_{1}f_{2}),$
   strongly attracts the spectra of $\{T_{n,g}(f_{1})T_{n,g}(f_{2})\},$ with an infinite order.\\

   In the following we present some numerical experiments which confirm the theoretic analysis.

   \section{Some numerical examples}\label{V}
   This section deals with a wide set of numerical experiments which confirms our theoretical results. We analyze in detail
   different situations of the eigenvalue distribution which cover the theoretic analysis in the cases where the symbols
   $f_{1},f_{2}\in L^{\infty}(\mathbb{T})$ satisfy some restrictions: $(e1)$ $f_{1},\text{\,}f_{2}$ are polynomials
   (for instance, $f_{1}(x)=1+x+ix^{2}$ and $f_{2}(x)=1+4x^{3}-ix^{2}$); $(e2)$  $f_{1},\text{\,}f_{2}$ are rational functions
   (for example, $f_{1}(x)=\frac{x}{1+x^{2}}+i\frac{1-x}{1+2x^{2}}$ and $f_{2}(x)=\frac{2+x}{3+x^{2}}+i\frac{x^{2}}{1+x^{2}}$);
   $(e_{3})$ $f_{1},\text{\,}f_{2}$ are trigonometric polynomials (for instance, $f_{1}(x)=\exp(ix),$ $f_{2}(x)=3\exp(i2x)$) and
   $(e4)$  $f_{1},$ $f_{2}$ are two-variable functions (for instance, $f_{1}(x,y)=3+x+iy^{2}$ and
   $f_{2}(x,y)=\frac{y}{1+x^{2}}+i\frac{1-x}{1+y^{2}}$). Each item deals with different values of positive integer $n$ and the parameter $g.$\\

    In these numerical experiments, we consider four test cases and we report for each considered case the eigenvalues of
    $T_{n,g}(f_{1})T_{n,g}(f_{2}).$ We construct tables of two rows. The first one denoted by $N_{n,\epsilon},$ shows the cardinality
    of the eigenvalues (those greater than some positive epsilon in absolute value) of $\{T_{n,g}(f_{1})T_{n,g}(f_{2})\},$
    while the second one designated by $r_{n,\epsilon}=n/N_{n,\epsilon},$ represents the rate. We observe from the
    tables related to tests $1,2,4$ that when the parameter $g$ is strictly greater than $1$, $N_{n,\epsilon}$ is more and more less than $\left\lceil\frac{n}{g}\right\rceil$. The numerical tests have been developed with MatLab $R2009a,$ and the eigenvalues have been
    computed by the built-in MatLab function eig().\\

    $\bullet$ \textbf{Test 1:} $g=2,\text{\,}5,\text{\,}10,\text{\,}20;$ $\epsilon=10^{-1},\text{\,}(2.10)^{-1},\text{\,}(5.10)^{-1},
    \text{\,}10^{-2};$ $f_{1}(x)=1+x+ix^{2},$ $f_{2}(x)=1+4x^{3}-ix^{2};$ $N_{n,\epsilon}=cardinality\{\lambda\in\Lambda(T_{n,g}(f_{1})T_{n,g}(f_{2})):\text{\,\,}|\lambda|\geq\epsilon\}$ and $r_{n,\epsilon}=N_{n,\epsilon}/n.$
        \begin{equation*}
     \begin{array}{cc}
        \begin{tabular}{|c|c|c|c|c|}
                  \hline
          $g$              & 2      & 2    & 2  & 2  \\
                \hline
          $n$              & 50      & 100 &  200 & 400 \\
            \hline
          $N_{n,10^{-1}}$ & 25      &  25   &  25  &  25 \\
            \hline
           $r_{n,10^{-1}}$ & 0.5000 & 0.2500 & 0.1250 & 0.0625 \\
            \hline
         \end{tabular}     &    \begin{tabular}{|c|c|c|c|c|}
                  \hline
          $g$              & 5     & 5    & 5  & 5 \\
                \hline
          $n$              & 50    & 100 &  200 &  400 \\
            \hline
          $N_{n,(2.10)^{-1}}$ & 2   & 4 &  6  &  11  \\
            \hline
           $r_{n,(2.10)^{-1}}$ & 0.0400 &0.0400 &0.0300 &0.0275 \\
            \hline
         \end{tabular} \\
         \text{\,}\\
            \begin{tabular}{|c|c|c|c|c|}
                  \hline
          $g$              & 10      & 10    & 10  & 10 \\
                \hline
          $n$              & 50     & 100 &  200 &  400 \\
            \hline
          $N_{n,(5.10)^{-1}}$& 1   &  1   &  1  &  2 \\
            \hline
           $r_{n,(5.10)^{-1}}$ & 0.0200& 0.0100& 0.0050& 0.0050\\
            \hline
         \end{tabular}  &    \begin{tabular}{|c|c|c|c|c|}
                  \hline
          $g$              & 20  & 20    & 20  & 20 \\
                \hline
          $n$              & 50 & 100    & 200 &  400 \\
            \hline
          $N_{n,10^{-2}}$ &  5  &  1     &  1  &  1 \\
            \hline
           $r_{n,10^{-2}}$&0.1000&0.0100 & 0.0050&0.00250 \\
            \hline
         \end{tabular}
     \end{array}
     \end{equation*}

     The tables suggest that the rate $"r_{n,\epsilon}"$ approaches zero when $n$ increases.\\

       $\bullet$ \textbf{Test 2:} $g=2,\text{\,}5,\text{\,}10,\text{\,}20;$ $\epsilon=10^{-1},\text{\,}(2.10)^{-1},\text{\,}(5.10)^{-1},
    \text{\,}10^{-2};$ $f_{1}(x)=\frac{x}{1+x^{2}}+i\frac{1-x}{1+2x^{2}},$ $f_{2}(x)=\frac{2+x}{3+x^{2}}+i\frac{x^{2}}{1+x^{2}};$
    $N_{n,\epsilon}=cardinality\{\lambda\in\Lambda(T_{n,g}(f_{1})T_{n,g}(f_{2})):\text{\,\,}|\lambda|\geq\epsilon\}$ and $r_{n,\epsilon}=N_{n,\epsilon}/n.$

        \begin{equation*}
     \begin{array}{cc}
         \begin{tabular}{|c|c|c|c|c|}
                  \hline
          $g$              & 2      & 2    & 2  & 2 \\
                \hline
          $n$              & 50     & 100 &  200 &  400 \\
            \hline
          $N_{n,10^{-1}}$ &  25    &  25   &  25   & 25  \\
            \hline
           $r_{n,10^{-1}}$ & 0.5000 & 0.2500 & 0.1250& 0.0625  \\
            \hline
         \end{tabular}     &    \begin{tabular}{|c|c|c|c|c|}
                  \hline
          $g$                 & 5       & 5    & 5    & 5 \\
                \hline
          $n$                 & 50      & 100  &  200 &  400 \\
            \hline
          $N_{n,(2.10)^{-1}}$ &  1      & 3    &  5   &  9 \\
            \hline
           $r_{n,(2.10)^{-1}}$ & 0.0200 &0.0300&0.0250&0.0225 \\
            \hline
         \end{tabular} \\
         \text{\,}\\
           \begin{tabular}{|c|c|c|c|c|}
                  \hline
          $g$              & 10      & 10    & 10  & 10 \\
                \hline
          $n$              & 50      & 100 &  200 &  400 \\
            \hline
          $N_{n,(5.10)^{-1}}$ & 1   &  1  &  1   &  1 \\
            \hline
           $r_{n,(5.10)^{-1}}$ & 0.0200& 0.0100&0.0050&0.0025  \\
            \hline
         \end{tabular}  &    \begin{tabular}{|c|c|c|c|c|}
                  \hline
          $g$           & 20      & 20    & 20  & 20 \\
                \hline
          $n$           & 50      & 100 &  200 &  400 \\
            \hline
          $N_{n,10^{-2}}$ &  1    &  1   &   1  &  1  \\
            \hline
           $r_{n,10^{-2}}$ & 0.0200& 0.0100& 0.0050& 0.0025\\
            \hline
         \end{tabular}
     \end{array}
     \end{equation*}

      The tables show that the rate $"r_{n,\epsilon}"$ quickly approaches zero when $n$ becomes large. So the $g$-Toeplitz sequences $\{T_{n,g}(f_{1})T_{n,g}(f_{2})\}$ are strongly clustered at zero.\\

     $\bullet$ \textbf{Test 3:} $g=2,\text{\,}5,\text{\,}10,\text{\,}20;$ $\epsilon=10^{-1},\text{\,}(2.10)^{-1},\text{\,}(5.10)^{-1},
    \text{\,}10^{-2};$ $f_{1}(x)=\exp(ix),$ $f_{2}(x)=3\exp(-i2x);$
    $N_{n,\epsilon}=cardinality\{\lambda\in\Lambda(T_{n,g}(f_{1})T_{n,g}(f_{2})):\text{\,\,}|\lambda|\geq\epsilon\}$ and $r_{n,\epsilon}=N_{n,\epsilon}/n.$
        \begin{equation*}
     \begin{array}{cc}
         \begin{tabular}{|c|c|c|c|c|}
                  \hline
          $g$              & 2  & 2    & 2  & 2 \\
                \hline
          $n$              & 50 & 100 &  200 &  400 \\
            \hline
          $N_{n,10^{-1}}$ &  0  &  0   &  0   &  0 \\
            \hline
           $r_{n,10^{-1}}$ & 0  &   0  &  0   &  0 \\
            \hline
         \end{tabular}     &    \begin{tabular}{|c|c|c|c|c|}
                  \hline
          $g$              & 5  & 5    & 5  & 5 \\
                \hline
          $n$              & 50 & 100 &  200 &  400 \\
            \hline
          $N_{n,(2.10)^{-1}}$ & 0  &  0   &  0   &  0 \\
            \hline
           $r_{n,(2.10)^{-1}}$ & 0 &  0   &  0   &  0 \\
            \hline
         \end{tabular} \\
         \text{\,}\\
           \begin{tabular}{|c|c|c|c|c|}
                  \hline
          $g$              & 10   & 10    & 10  & 10\\
                \hline
          $n$              & 50  & 100 &  200 &  400 \\
            \hline
          $N_{n,(5.10)^{-1}}$ & 0   &  0   &  0   & 0  \\
            \hline
           $r_{n,(5.10)^{-1}}$ & 0  &   0  &  0   &  0 \\
            \hline
         \end{tabular}  &   \begin{tabular}{|c|c|c|c|c|}
                  \hline
          $g$              & 20   & 20  & 20  & 20 \\
                \hline
          $n$              & 50  & 100 &  200&  400 \\
            \hline
          $N_{n,10^{-2}}$ &   0   &  0   &   0   &  0   \\
            \hline
           $r_{n,10^{-2}}$ &  0  &  0   &    0  &  0   \\
            \hline
         \end{tabular}
     \end{array}
     \end{equation*}
    The tables show that the rate $"r_{n,\epsilon}"$ equals zero for every value of $n.$ So the sequence $\{T_{n,g}(f_{1})T_{n,g}(f_{2})\}$
    is strongly clustered at zero in the sense of eigenvalues.\\

     $\bullet$ \textbf{Test 4 (bidimensional case):} Setting $n=(n_{1},n_{2})$ and $|n|=n_{1}n_{2};$ \text{\,}$g=(g_{1},g_{2})=(2,2),
     \text{\,}(5,5),\text{\,}(10,10),\\
     \text{\,}(20,20);$ $\epsilon=10^{-1},\text{\,}(2.10)^{-1}, \text{\,}(5.10)^{-1},\text{\,}10^{-2};$ $f_{1}(x,y)=3+x+iy^{2},$ $f_{2}(x,y)=\frac{y}{1+x^{2}}+i\frac{1-x}{1+y^{2}};$ \\ $N_{n,\epsilon}=cardinality\{\lambda\in\Lambda(T_{n,g}(f_{1})T_{n,g}(f_{2})):\text{\,\,}|\lambda|\geq\epsilon\}$ and $r_{n,\epsilon}=N_{n,\epsilon}/|n|.$

        \begin{equation*}
     \begin{array}{cc}
         \begin{tabular}{|c|c|c|c|}
                  \hline
          $g$              & (2,2) & (2,2)    & (2,2)  \\
                \hline
          $n$              & (50,50)  & (100,100) & (200,200)  \\
            \hline
          $N_{n,10^{-1}}$ & 625  &  625   &  625   \\
            \hline
           $r_{n,10^{-1}}$ & 0.2500   & 0.0625    & 0.0156    \\
            \hline
         \end{tabular}     &    \begin{tabular}{|c|c|c|c|}
                  \hline
          $g$              & (5,5)  & (5,5)  & (5,5) \\
                \hline
          $n$              & (50,50)& (100,100)& (200,200) \\
            \hline
          $N_{n,(2.10)^{-1}}$ & 1  &  8    & 23  \\
            \hline
           $r_{n,(2.10)^{-1}}$ &0.0004 &0.0008 & 5.75e-4\\
            \hline
         \end{tabular} \\
         \text{\,}\\
           \begin{tabular}{|c|c|c|c|}
                  \hline
          $g$              & (10,10) & (10,10)    & (10,10) \\
                \hline
          $n$              & (50,50) & (100,100) & (200,200) \\
            \hline
          $N_{n,(5.10)^{-1}}$ & 1  & 1   & 1   \\
            \hline
           $r_{n,(5.10)^{-1}}$ & 0.0004 & 0.0001 & 2.5e-5   \\
            \hline
         \end{tabular}  &   \begin{tabular}{|c|c|c|c|}
                  \hline
          $g$              & (20,20)  & (20,20)  & (20,20) \\
                \hline
          $n$              & (50,50) & (100,100) & (200,200)\\
            \hline
          $N_{n,10^{-2}}$ &  2   &  1   & 1 \\
            \hline
           $r_{n,10^{-2}}$ & 0.0016 &0.0001& 2.5e-5  \\
            \hline
         \end{tabular}
     \end{array}
     \end{equation*}
     The tables suggest that the rate $"r_{n,\epsilon}"$ quickly approaches zero when $|n|$ becomes large. So the sequence
     $\{T_{n,g}(f_{1})T_{n,g}(f_{2})\},$ is clustered at zero in the sense of eigenvalues.\\

    A combination of four Tests shows the crucial role played by the product $\theta_{g}f_{1}f_{2}$ in the characterizing of eigenvalue
     distribution of products of $g$-Toeplitz structures. The cardinality of the eigenvalues (those greater than $\epsilon$ in absolute value)
     of $T_{n,g}(f_{1})T_{n,g}(f_{2})$ together with the rates agree with the corresponding theoretical results. In addition,
    the tables indicate that the distribution result (in the sense of eigenvalues) is subtle. It is not unconditionally "non distributed"
    for any values of the generating functions $f_{1}$ and $f_{2}$ along with the parameter $g$ (see for example the tables of tests).
    Furthermore, the third test suggests that if the symbols $f_{1}$ and $f_{2}$ are trigonometric polynomials the rates $r_{n,\epsilon}$
    are closer and closer to zero for any values of $n$ and $g$ (with $g\geq2$). This means that the sequences $\{T_{n,g}(f_{1})T_{n,g}(f_{2})\},$
    is clustered at zero (in the sense of eigenvalues) for every values of $n$ and $g$ (with $g\geq2$). When comparing the tables, it is
    easy to see that the bidimensionsional case provides good clustering to zero. Finally, the tests suggest that the additional assumption:
    for every $l\in\left\{\left[\frac{m}{g}\right],\left[\frac{m+1}{g}\right],..., \left[\frac{n-1-m}{g}\right] \right\},$ $T_{n,g}^{P_{m,f_{1}}P_{m,f_{2}}}\left(\exp(iglt)-\exp(ilt)\right)=0$ is not required to get a distribution result in the sense of eigenvalues
    for products of $g$-Toeplitz sequences. More specifically, we think that the only hypothesis $f_{1},f_{2}\in L^{\infty}(G)$ is sufficient
    to obtain a distribution result. We recall that this assumption was only essential in the proof of Proposition $\ref{28p},$ which was crucial in
    proving both Proposition $\ref{29p}$ and Theorem $\ref{23t}.$

    \section{Conclusion and future works}\label{VI}
    This paper has studied in detail the eigenvalue distribution of products of $g$-Toeplitz sequences\\
    $\{T_{n,g}(f_{1})T_{n,g}(f_{2})\},$ in the case where the generating functions $f_{1},f_{2}\in L^{\infty}(-\pi,\pi).$
    The analysis has shown that if: $(R1)$ $f_{1},f_{2}\in L^{\infty}(G)$ and $(R2)$ the $g$-Toeplitz matrix
    $T_{n,g}(P_{m,f_{1}}P_{m,f_{2}})$ related to the $g$-Toeplitz operator $T_{n,g}^{P_{m,f_{1}}P_{m,f_{2}}}$ (where the polynomials
    $P_{m,f_{i}},$ $i=1,2,$ are the arithmetic averages of Fourier sums of order $r,$ with $r\leq m,$ of the functions $f_{i},$ respectively)
    satisfy an additional condition, then the distribution result: $\{T_{n,g}(f_{1})T_{n,g}(f_{2})\}\sim_{\lambda}(0,G)$ holds when the
    parameter $g$ is greater than $1.$ This theoretical result was clearly confirmed by some numerical experiments in both one and two dimensions.
    Furthermore, Test $4$ shows that the good clustering to zero is obtained in bidimensional setting. Finally, the generalization
    of this result to the blocks and multilevel setting amounting to choose the matrix-valued symbols was also presented. According to
    information provided by tables, the question is to know if something can be said if requirement $(R1)$ is deleted. More specifically, is it possible to establish a distribution result (in the sense of eigenvalues) for products of $g$-Toeplitz sequences, $\{T_{n,g}(f_{1})T_{n,g}(f_{2})\}$ (case where $g>1$), when either requirement $(R1)$ or restriction $(R2)$ is not satisfied?
    The latter problem will be subject of our future investigations.\\

    \textbf{Acknowledgment.} The author thanks the anonymous referees for detailed and valuable comments which helped to greatly improve
      the quality of this paper.

     \end{document}